\titleformat{\section}[block]{\bfseries\large}{\thesection. }{2pt}{}
\titleformat{\subsection}[block]
  {\bfseries\itshape}  
  {\thesubsection.}{2pt}{}
\theoremstyle{plain}
\newtheorem{Th}{Theorem}[section]
\newtheorem{lemma}[Th]{Lemma}
\newtheorem{proposition}[Th]{Proposition}
\theoremstyle{definition}
\newtheorem{definition}[Th]{Definition}
\newtheorem{remark}[Th]{Remark}
\newtheorem{Ex}[Th]{Example}
\newtheorem{theorem}[Th]{Theorem}
\newcommand{\R}{\mathbb{R}}
\newcommand{\C}{\mathbb {C}}
\newcommand{\changetoline}[1]{%
    \let\parallel\|%
    #1
}
\begin{document}
\thispagestyle{empty}
\begin{center}
{\Large \textbf{The Two-Sided Clifford Dunkl Transform and Miyachi's Theorem}}
\end{center}

\vspace{1cm}

\begin{center}
\textbf{Mohamed Essenhajy} \footnote{Corresponding Author: mohamed.essenhajy@edu.umi.ac.ma}\footnote{Faculty of Sciences, Moulay Ismail University of Meknes, Morocco.} and \textbf{Said Fahlaoui}\footnote{saidfahlaoui@gmail.com}\footnote{Faculty of Sciences, Moulay Ismail University of Meknes, Morocco.}
\end{center}

\leftskip=2cm

\vspace{2cm}

\noindent\textbf{Abstract.}
\\\\
In recent developments, significant progress has been made in extending the Dunkl transform within the framework of Clifford algebras. In particular, the two-sided quaternionic Dunkl transform, which serves as a Dunkl analogue of the two-dimensional quaternionic Fourier transform, has been introduced. In this paper, we present the two-sided Clifford Dunkl transform, defined with two square roots of \(-1\) in \(Cl_{p,q}\). We investigate various properties of this transform, including the inverse theorem and the Plancherel formula, and derive two explicit formulas for the corresponding translation operator. Furthermore, we establish an analogue of Miyachi’s theorem for this transform.
\\\\
\noindent\textbf{Keywords:} Fourier transform; Dunkl transform; Clifford analysis; Miyachi’s theorem; Uncertainty principles
\\
\\
\noindent\textbf{Mathematics Subject Classification (2010).} Primary 42-XX, 43-XX; Secondary 42B35, 42B10, 33C45, 30G35

\leftskip=0cm 

\vspace*{2cm}

\section{Introduction}

\noindent The Dunkl theory generalizes classical Fourier analysis on the Euclidean space $\mathbb{R}^d$. It was originally introduced by Charles Dunkl in \cite{4444} and has since been developed by many mathematicians, finding applications in various fields of mathematics and mathematical physics (see \cite{key-3}, \cite{DAP1}). In particular, the Dunkl transform has proven to be a powerful tool in harmonic analysis, providing a framework for studying differential-difference operators associated with reflection groups. Moreover, recent works have further advanced and extended Dunkl harmonic analysis to new settings (see \cite{key-BM}, \cite{key-TD3}, \cite{key-TD4}, and \cite{key-TD5}).

\noindent Clifford analysis (see \cite{key-BR}) extends the theory of holomorphic functions from the complex plane to higher dimensions, providing a refined framework for harmonic analysis in Euclidean space. In recent years, significant progress has been made in generalizing classical results from complex and quaternionic analysis to higher dimensions via Clifford algebras. Notable contributions in this direction include \cite{key-TD1}, \cite{key-TD2}, \cite{key-MF1}, and \cite{key-MF2}, which have paved the way for novel applications in signal processing, probability theory, and the establishment of uncertainty principles within the Clifford framework. Overall, these developments underscore the versatile role of Clifford algebras in advancing both theoretical and applied aspects of hypercomplex analysis, with numerous applications in physics and hypercomplex signal theory (see \cite{key-BR2}, \cite{Dor}, \cite{phy}, \cite{GHH}, \cite{Por}, \cite{jv}).

Since 2005, Dunkl theory has attracted growing interest within the Clifford analysis community (see \cite{OPD}, \cite{DBY}, \cite{key-fah}, \cite{key-uncertainty}). In \cite{key-fah}, the authors introduced and studied a Dunkl version of the quaternionic Fourier transform, known as the two-sided quaternionic Dunkl transform. However, a more general framework encompassing Clifford algebras has remained unexplored.

The aim of this paper is twofold. First, we propose a definition of the two-sided Clifford Dunkl transform (CDT), which generalizes the two-sided quaternionic Dunkl transform by incorporating two square roots of $-1$ in the Clifford algebra $Cl_{p,q}$. We establish fundamental properties of the CDT, including the inversion formula, Plancherel identity, and convolution theorem. 

Second, we extend Miyachi’s theorem to the CDT setting using properties of the Dunkl kernel. This result provides an analogue of a well-known theorem in Fourier analysis, further demonstrating the depth of the Clifford Dunkl framework.


The structure of the paper is as follows: Section 2 provides an overview of Dunkl analysis for readers unfamiliar with the topic. Section 3 reviews fundamental notions of Clifford algebras. In Section 4, we construct the CDT and establish its key properties. Finally, Section 5 is dedicated to the extension of Miyachi’s theorem in this setting.

\section{Preliminaries on Dunkl-Clifford Analysis}

\subsection{Fundamentals of Dunkl Analysis}
This subsection introduces the fundamental concepts of Dunkl theory that will be required in the subsequent sections. For detailed accounts, we refer the reader to \cite{key-3}, \cite{key-5}, \cite{key-6}, \cite{co}, and \cite{MR}.

Consider the Euclidean space $\mathbb{R}^{d}$ equipped with the Euclidean inner product $\langle \cdot, \cdot \rangle$ and norm $\| x \| = \sqrt{ \langle x,x \rangle}$. We extend the inner product $\langle \cdot, \cdot \rangle$ to a bilinear form on $\mathbb{C}^{d} \times \mathbb{C}^{d}$, employing the same notation for this extension.

Let $G$ be a finite reflection group on $\mathbb{R}^{d}$, associated with a root system $R$. A multiplicity function associated with the root system $R$ is a function $\textbf{\textit{k}}_{\scriptscriptstyle d} : R \rightarrow \mathbb{C}$ such that $\textbf{\textit{k}}_{\scriptscriptstyle d}(\alpha) = \textbf{\textit{k}}_{\scriptscriptstyle d}(w(\alpha))$ for all $w$ in $G$. The Dunkl operators $T^{\scriptscriptstyle d}_{j}$, $j=1,\ldots,d$, on $\mathbb{R}^{d}$, associated with the reflection group $G$ and the multiplicity function $\textbf{\textit{k}}_{\scriptscriptstyle d}$, are defined for a function $f$ of class $\mathcal{C}^{1}$ on $\mathbb{R}^{d}$ by
\[
T^{d}_{j} f(x) = \frac{\partial}{\partial x_{j}} f(x) + \sum_{\alpha \in R_{+}} \textbf{\textit{k}}_{\scriptscriptstyle d}(\alpha) \frac{f(x) - f(\sigma_{\alpha}(x))}{\langle x, \alpha \rangle} \alpha_{j},
\]
where $R_{+}$ is a set of positive roots of $R$, and $\sigma_{\alpha}$ denotes the reflection corresponding to a nonzero root $\alpha \in R_{+}$.

Throughout this paper, we assume that $\textbf{\textit{k}}_{\scriptscriptstyle d} \geq 0$.

For a fixed $y \in \mathbb{R}^{d}$, the Dunkl kernel $E_{{d}, \textbf{\textit{k}}_{\scriptscriptstyle d}}(x, y)$ is the unique solution to the joint eigenvalue problem for the Dunkl operators:
\[
\left\{
\begin{array}{ll}
   T^{d}_{j}f(x,y)=y_{j}f(x,y) & \text{for } j=1, \ldots, d; \\
   f(0,y)=1. & 
\end{array}
\right.
\]

This kernel has a unique holomorphic extension to $\mathbb{C}^{d} \times \mathbb{C}^{d}$ and satisfies the following properties:
 \begin{enumerate}
 \item[(i)] $E_{d, \textbf{\textit{k}}_{\scriptscriptstyle d}}(\lambda x,y) = E_{d, \textbf{\textit{k}}_{\scriptscriptstyle d}}(x,\lambda y)$ and $E_{d, \textbf{\textit{k}}_{\scriptscriptstyle d}}(x,y)=E_{d, \textbf{\textit{k}}_{\scriptscriptstyle d}}(y,x)$ for all $x, y \in \mathbb{C}^{d}$ and $\lambda \in \mathbb{C}$.
 \item[(ii)] For all $\nu \in \mathbb{N}^{d}$, $x \in \mathbb{R}^{d}$, and $y \in \mathbb{C}^{d}$, we have
 \begin{align}\label{DK}
 \| D^{\nu}_{y} E_{d, \textbf{\textit{k}}_{\scriptscriptstyle d}}(x,y) \| \leq \| x \|^{\textbf{\textit{l}}( \nu )} e^{ \| x \| \| \operatorname{Re}(y) \|},
 \end{align}
 where $D^{\nu}_{y} = \frac{\partial^{\textbf{\textit{l}}(\nu)}}{( \partial y^{\nu_{1}}_{1} \cdots \partial y^{\nu_{d}}_{d})}$ and $\textbf{\textit{l}}( \nu ) = \nu_{1} + \cdots + \nu_{d}$.\\ 
 In particular, for all $x, y \in \mathbb{R}^{d}$, we have
 \begin{align}\label{yy}
  \| E_{d, \textbf{\textit{k}}_{\scriptscriptstyle d}}(-ix,y) \| \leq 1,
\end{align}
where $i$ denotes the imaginary unit.
 \end{enumerate}
We introduce the index $\gamma$ of the root system as
\[ 
\gamma := \gamma_{d} = \sum_{\alpha \in R_{+}} \textbf{\textit{k}}_{\scriptscriptstyle d}(\alpha).
\]  
Moreover, let $w_{\textbf{\textit{k}}_{\scriptscriptstyle d}}$ denote the weight function
\[ 
w_{\textbf{\textit{k}}_{\scriptscriptstyle d}}(x) = \prod_{\alpha \in R_{+}} \vert\langle \alpha, x \rangle \vert^{2\textbf{\textit{k}}_{\scriptscriptstyle d}(\alpha)}, \quad x \in \R^{d}.
\]
For every $p \in [1, +\infty]$, we denote by $L^{p}_{\textbf{\textit{k}}_{\scriptscriptstyle d}}(\R^{d})$ the space of measurable functions $f$ on $\R^{d}$ such that
\begin{eqnarray*}
\Vert f \Vert _{\textbf{\textit{k}}_{\scriptscriptstyle d}, p} &=& \left( \int_{\R^{d}} \vert f(x) \vert^{p} w_{\textbf{\textit{k}}_{\scriptscriptstyle d}}(x) \, dx \right)^{\frac{1}{p}} < + \infty, \quad \text{if } 1 \leq p < +\infty,\\
\Vert f \Vert _{\textbf{\textit{k}}_{\scriptscriptstyle d}, \infty} &=& \text{ess sup}_{x \in \R^{d}} \vert f(x) \vert < + \infty.
\end{eqnarray*}

The Dunkl transform on $L^{p}_{\textbf{\textit{k}}_{\scriptscriptstyle d}}(\R^{d})$ is defined by
\[
\mathcal{F}_{\textbf{\textit{k}}_{\scriptscriptstyle d}} (f) (y) := c_{\textbf{\textit{k}}_{\scriptscriptstyle d}} \int_{\mathbb{R}^{d}} f(x) E_{d, \textbf{\textit{k}}_{\scriptscriptstyle d}}(x, -i y) w_{\textbf{\textit{k}}_{\scriptscriptstyle d}}(x) \, dx,
\]
where the Mehta-type constant $c_{\textbf{\textit{k}}_{\scriptscriptstyle d}}$ is given by
\[ 
c_{\textbf{\textit{k}}_{\scriptscriptstyle d}} = \left( \int_{\R^{d}} e^{-\frac{\parallel x \parallel^{2}}{2}} w_{\textbf{\textit{k}}_{\scriptscriptstyle d}}(x) \, dx \right)^{-1}.
\]

\begin{remark}
In the case where $\textbf{\textit{k}}_{\scriptscriptstyle d}=0$, the weight function $w_{\textbf{\textit{k}}_{\scriptscriptstyle d}} = 1$, and the measure associated with the Dunkl operators coincides with the Lebesgue measure. Additionally, $E_{d, \textbf{\textit{k}}_{\scriptscriptstyle d}}(x, -iy) = e^{-i \langle x, y \rangle}$. Therefore, Dunkl analysis can be seen as a generalization of classical Fourier analysis.
\end{remark}
The Dunkl transform enjoys several fundamental properties, analogous to those of the Fourier transform, among which we mention the following:
\begin{enumerate}
\item[(i)] \textbf{Inversion theorem:} Let $f \in L^{p}_{\textbf{\textit{k}}_{\scriptscriptstyle d}}(\mathbb{R}^{d})$, such that $\mathcal{F}_{\textbf{\textit{k}}_{\scriptscriptstyle d}}(f) \in L^{p}_{\textbf{\textit{k}}_{\scriptscriptstyle d}}(\mathbb{R}^{d})$. Then,
\begin{align}\label{IVD}
f(x) = c_{\textbf{\textit{k}}_{\scriptscriptstyle d}} \int_{\mathbb{R}^{d}} \mathcal{F}_{\textbf{\textit{k}}_{\scriptscriptstyle d}}(f)(y) E_{d, \textbf{\textit{k}}_{\scriptscriptstyle d}}(x, i y) w_{\textbf{\textit{k}}_{\scriptscriptstyle d}}(y) \, dy.
\end{align}

\item[(ii)] \textbf{Plancherel formula:} For every $f \in L^{2}_{\textbf{\textit{k}}_{\scriptscriptstyle d}}(\mathbb{R}^{d})$, we have
\begin{align}\label{PF}
\int_{\mathbb{R}^{d}} \vert f(x) \vert^{2} w_{\textbf{\textit{k}}_{\scriptscriptstyle d}}(x) \, dx = \int_{\mathbb{R}^{d}} \vert \mathcal{F}_{\textbf{\textit{k}}_{\scriptscriptstyle d}}(f)(y) \vert^{2} w_{\textbf{\textit{k}}_{\scriptscriptstyle d}}(y) \, dy.
\end{align}

\item[(iii)] There exists a basis $\lbrace h_{v}: \, v \in \mathbb{Z}_{+}^{d} \rbrace$ of eigenfunctions of the Dunkl transform $\mathcal{F}_{\textbf{\textit{k}}_{\scriptscriptstyle d}}$ on $L^{2}_{\textbf{\textit{k}}_{\scriptscriptstyle d}}(\mathbb{R}^{d})$, satisfying
\begin{align}\label{tttt}
\mathcal{F}_{\textbf{\textit{k}}_{\scriptscriptstyle d}}(h_{v})(y) = 2^{\gamma + \frac{d}{2}} \, c^{-1}_{\textbf{\textit{k}}_{\scriptscriptstyle d}} \, (-i)^{\textbf{\textit{l}}( v )} \, h_{v}(y).
\end{align}

\item[(iv)] For every $\delta > 0$ and every $x \in \mathbb{R}^{d}$,
\begin{align}\label{ll66}
\mathcal{F}_{\textbf{\textit{k}}_{\scriptscriptstyle d}}(e^{-\delta \parallel x \parallel^{2}})(y) = \frac{1}{(2\delta)^{\gamma + \frac{d}{2}}} e^{-\frac{\parallel y \parallel^{2}}{4\delta}}.
\end{align}
\end{enumerate}

Let $x \in \mathbb{R}^{d}$. The Dunkl translation operator $f \mapsto \tilde{\tau}_{x}f$ is defined on $L^{2}_{\textbf{\textit{k}}_{\scriptscriptstyle d}}(\mathbb{R}^{d})$ by the relation
\[
\mathcal{F}_{\textbf{\textit{k}}_{\scriptscriptstyle d}} (\tilde{\tau}^{\textbf{\textit{k}}_{\scriptscriptstyle d}}_{x}f)(y) = E_{d, \textbf{\textit{k}}_{\scriptscriptstyle d}}(x, -i_{\mathbb{C}}y) \mathcal{F}_{\textbf{\textit{k}}_{\scriptscriptstyle d}} (f)(y) \quad \text{for all} \, y \in \mathbb{R}^{d}.
\]
Currently, explicit formulas for the generalized operator $\tilde{\tau}^{\textbf{\textit{k}}_{\scriptscriptstyle d}}$ are known in the following cases:
\begin{enumerate}
\item[\textbf{Case 1:}] When $d = 1$ and the reflection group $G$ is $\mathbb{Z}_{2}$.

Let $\Omega = \sqrt{y^{2} - 2xyt + x^{2}}$ and $\textbf{\textit{k}}$ be the unique multiplicity function associated with $R$. For continuous functions $f$ on $\mathbb{R}$, we have 
\begin{align}\label{EXPLICITD1}
\tilde{\tau}^{\textbf{\textit{k}}}_{x}f(y) = \frac{1}{2} \int_{-1}^{1} f(\Omega) \left( 1 + \frac{x - y}{\Omega} \right) \psi_{\textbf{\textit{k}}}(t) \, dt + \frac{1}{2} \int_{-1}^{1} f(-\Omega) \left( 1 - \frac{x - y}{\Omega} \right) \psi_{\textbf{\textit{k}}}(t) \, dt,
\end{align}
where
\[
\psi_{\textbf{\textit{k}}}(t) = \frac{\Gamma (\textbf{\textit{k}} + \frac{1}{2})}{\sqrt{\pi} \Gamma (\textbf{\textit{k}})} (1 + t)(1 - t^{2})^{\textbf{\textit{k}} - 1}.
\]
This formula was established by R\"{o}sler in \cite{rosler6}.
\item[\textbf{Case 2:}] When the function $f$ is radial and $f \in L^{1}_{\textbf{\textit{k}}_{\scriptscriptstyle d}}(\mathbb{R}^{d})$ such that $\mathcal{F}_{\textbf{\textit{k}}_{\scriptscriptstyle d}}(f) \in L^{1}_{\textbf{\textit{k}}_{\scriptscriptstyle d}}(\mathbb{R}^{d})$.

For all $x \in \mathbb{R}^{d}$, the Dunkl translation $\tilde{\tau}^{\textbf{\textit{k}}_{\scriptscriptstyle d}}_{x}$ is given by the following formula:
\begin{align}\label{EXPLICITD2}
\tilde{\tau}^{\textbf{\textit{k}}_{\scriptscriptstyle d}}_{x}f(y) = V_{\textbf{\textit{k}}_{\scriptscriptstyle d}} \left( F(\sqrt{\| x \|^{2} + 2 \langle x, \cdot \rangle + \| \cdot \|^{2}}) \right)(y),
\end{align}
where $F$ is a real-valued function on $\mathbb{R}^{d}$ such that $f(x) = F(\| x \|^{2})$, and $V_{\textbf{\textit{k}}_{\scriptscriptstyle d}}$ is the Dunkl intertwining operator. This formula was also derived by R\"{o}sler in \cite{rosler4}.
\end{enumerate}

\subsection{Fundamental concepts of Clifford algebras}
Clifford algebras, denoted $Cl_{p,q}$, form a family of associative algebras that generalize complex numbers and quaternions. They play a central role in geometry, analysis, and mathematical physics.

Let \(\mathbb{R}^{d}\) be a Euclidean space equipped with a non-degenerate quadratic form of signature \((p, q)\), where \(p + q = d\). Consider an orthogonal basis \(\{ e_{1}, \dots, e_{d} \}\) for \(\mathbb{R}^{p,q}\). The multiplication in the Clifford algebra \( Cl_{p,q} \), constructed over \(\mathbb{R}^{p,q}\), adheres to the following rules:
\[
e_i e_j + e_j e_i = 2 \eta_{ij}
\]
where \( \eta_{ij} \) is the metric of signature \((p,q)\), that is:
\[
\eta_{ij} = \begin{cases} 
+1 & \text{if } i = j \leq p \\
-1 & \text{if } i = j > p \\
0 & \text{if } i \neq j. 
\end{cases}
\]

The elements \(\{ e_{A} : A = \{j_{1}, \ldots, j_{k}\} \subset \{1, \ldots, d\} \}\), where \(e_{A} = e_{j_{1}} \dots e_{j_{k}}\) and \(e_{\emptyset} = 1\) (with \(e_{\emptyset}\) representing the empty set), form a basis for \( Cl_{p,q} \). Consequently, any element \( M \in Cl_{p,q} \) can be expressed as follows:
\begin{align}\label{CN}
M = \sum_{A} e_{A} M_{A}, \quad M_{A} \in \mathbb{R}.
\end{align}

Furthermore, the Clifford number \( M \) can be decomposed into its \(k\)-vector parts:
\begin{align*}
M = \sum_{k=0}^{d} \langle M \rangle_{k},
\end{align*}
where \(\langle M \rangle_{k} = \sum_{\mid A \mid = k} M_{A} e_{A}\) represents the \(k\)-vector part of \( M \). Each Clifford number comprises various \(k\)-vector components: the scalar part \(\langle M \rangle_{0} \in \mathbb{R}\), the vector part \(\langle M \rangle_{1}\), the bi-vector part \(\langle M \rangle_{2}\), and so forth, up to the pseudoscalar part \(\langle M \rangle_{d}\).

Important involutions in Clifford algebras include the reversion and the principal reverse. The reversion, denoted by \(\overline{M}\), changes the sign of every vector with a negative square in the basis decomposition of \(M\). This operation is defined as:
\[ \overline{e_{h_{1}}, \dots, e_{h_{k}}} = \varepsilon_{h_{1}} e_{h_{1}} \varepsilon_{h_{2}} e_{h_{2}} \dots \varepsilon_{h_{k}} e_{h_{k}} \]
where \(1 \leq h_{1} < \dots < h_{k} \leq d\), and \(\varepsilon_{k} = +1\) for \(k=1, \dots, p\), and \(\varepsilon_{k} = -1\) for \(k=p+1, \dots, d\).

The principal reverse, denoted by \(\widetilde{M}\), is given by:
\[ \widetilde{M} = \sum_{k=0}^{d} (-1)^{\frac{k(k-1)}{2}} \langle \overline{M} \rangle_{k}. \]

For \( M, N \in Cl_{p,q} \), the scalar product \( M * \widetilde{N} \) is defined by
\begin{align*}
 M * \widetilde{N} = \langle M \widetilde{N} \rangle_{0} = \sum_{A} M_{A} N_{A}.
\end{align*}
In particular, if \( M = N \), we obtain the modulus of a Clifford number \( M \in Cl_{p,q} \), defined as
\begin{align}\label{MCN}
\| M \|_{c}^{2} = M * \widetilde{M} = \sum_{A} M_{A}^{2}. 
\end{align}
\section{The Two-sided Clifford Dunkl Transform}
In this section, we present the two-sided Clifford Dunkl transform, an extension of the two-sided Clifford-Fourier transform. Our objective is to delve into its mathematical foundations and explore its inherent properties. 
Before introducing the two-sided Clifford Dunkl transform, it is necessary to define some notations that will be employed throughout the remainder of this paper.

In what follows, we will consider: 
\begin{enumerate}
\item[$\bullet$] Two reduced root systems $R_{p}$ and $R_{q}$ such that $R =R_{p} \cup R_{q} $.
\item[$\bullet$] Two multiplicity functions $\textbf{\textit{k}}_{\scriptscriptstyle p}$ and $\textbf{\textit{k}}_{\scriptscriptstyle q}$ associated respectively with root systems $R_{p}$ and $R_{q}$.
\item[$\bullet$] Two Dunkl kernels $E_{p, \textbf{\textit{k}}_{\scriptscriptstyle p}}$ and $E_{q, \textbf{\textit{k}}_{\scriptscriptstyle q}}$ associated respectively with the root systems $R_{p}$ and $R_{q}$.
\item[$\bullet$] Two measures $d\mu^{\textbf{\textit{k}}_{p}}_{\scriptscriptstyle p}$ and $\mu^{\textbf{\textit{k}}_{q}}_{\scriptscriptstyle q}$ defined by $$d\mu^{\textbf{\textit{k}}_{p}}_{\scriptscriptstyle p}(\textbf{\textit{x}}_{1}) := w_{\textbf{\textit{k}}_{p}}(\textbf{\textit{x}}_{1})d^{p}\textbf{\textit{x}}_{1}  ~~ \text{ and} ~~ d\mu^{\textbf{\textit{k}}_{q}}_{q}(\textbf{\textit{x}}_{2}):=w_{ \textbf{\textit{k}}_{q}}(\textbf{\textit{x}}_{2})d^{q}\textbf{\textit{x}}_{2} ,$$ where $\textbf{\textit{x}}_{1} = (x_{1},\dots,x_{p})$ and $\textbf{\textit{x}}_{2} = (x_{p+1},\dots,x_{d}).$
\end{enumerate}
\subsection{Multivector-Valued Functions and the Space \( L^{n}_{\textbf{\textit{k}}_{\scriptscriptstyle  d}}(\mathbb{R}^{p,q};Cl_{p,q}) \)}
A multivector-valued function defined on \(\mathbb{R}^{p,q}\) takes values in the Clifford algebra \(Cl_{p,q}\). Any such function can be expressed as \( f(\textbf{\textit{x}}) = \sum_{A} f_{A}(\textbf{\textit{x}}) e_{A} \), where \( f_{A} \) are real-valued components. These functions can be decomposed as follows:
\begin{align}\label{cmf}
f(\textbf{\textit{x}}) = f_{0}(\textbf{\textit{x}}) + \sum_{k=1}^{d} f_{k}(\textbf{\textit{x}}) e_{k} + \sum_{1 \leq k \leq l \leq d} f_{kl}(\textbf{\textit{x}}) e_{k} e_{l} + \dots + f_{12 \dots d}(\textbf{\textit{x}}) e_{1} e_{2} \dots e_{d},
\end{align}
where the components \( f_{0}, f_{k}, f_{kl}, \dots, f_{12 \dots d} : \mathbb{R}^{d} \rightarrow \mathbb{R} \) are real-valued functions. 

The definition of the modulus can be extended to multivector-valued functions. The modulus of such a function is given by:
\begin{align*}
\| f(\textbf{\textit{x}}) \|_{c}^{2} = f_{0}^{2}(\textbf{\textit{x}}) + \sum_{k=1}^{d} f_{k}^{2}(\textbf{\textit{x}}) + \sum_{1 \leq k \leq l \leq d} f_{kl}^{2}(\textbf{\textit{x}}) + \dots + f_{12 \dots d}^{2}(\textbf{\textit{x}}).
\end{align*}

We define the inner product \((f, g)\) and the scalar product \(\langle f, g \rangle\) for multivector functions \( f, g : \mathbb{R}^{p,q} \rightarrow Cl_{p,q} \) as follows:
\begin{align}\label{mm}
(f, g) = \int_{\mathbb{R}^{p,q}} f(\textbf{\textit{x}}) \widetilde{g}(\textbf{\textit{x}}) \, d\mu_{p,q}(\textbf{\textit{x}}) = \sum_{A,B} e_{A} \widetilde{e_{B}} \int_{\mathbb{R}^{p,q}} f_{A}(\textbf{\textit{x}}) g_{B}(\textbf{\textit{x}}) \, d\mu_{p,q}(\textbf{\textit{x}}),
\end{align}
\begin{align}\label{mmm}
\langle f, g \rangle = \int_{\mathbb{R}^{p,q}} f(\textbf{\textit{x}}) * \widetilde{g}(\textbf{\textit{x}}) \, d\mu_{p,q}(\textbf{\textit{x}}) = \sum_{A} \int_{\mathbb{R}^{p,q}} f_{A}(\textbf{\textit{x}}) g_{A}(\textbf{\textit{x}}) \, d\mu_{p,q}(\textbf{\textit{x}}),
\end{align}
where $d\mu_{p,q}$ is the measure on $\mathbb{R}^{p,q}$ given by $d\mu_{p,q}(\textbf{\textit{x}}): =d\mu^{\textbf{\textit{k}}_{p}}_{p}(\textbf{\textit{x}}_{1})  d\mu^{\textbf{\textit{k}}_{q}}_{q}(\textbf{\textit{x}}_{2}) $.

Both (\ref{mm}) and (\ref{mmm}) lead to the \( L^{n}_{\textbf{\textit{k}}_{\scriptscriptstyle  d}}(\mathbb{R}^{p,q};Cl_{p,q}) \)-norm
\[
\vert f \vert_{\textbf{\textit{k}}_{\scriptscriptstyle  d},n} = \left( \int_{\mathbb{R}^{p,q}} \| f(\textbf{\textit{x}}) \|_{c}^{n} \, d\mu_{p,q}(\textbf{\textit{x}}) \right)^{\frac{1}{n}}, \quad \text{if } 1 \leq n < +\infty;
\]
and 
\[
\vert f \vert_{\textbf{\textit{k}}_{\scriptscriptstyle  d},\infty} = \text{ess} \, \sup_{\textbf{\textit{x}} \in \mathbb{R}^{p,q}} \| f(\textbf{\textit{x}}) \|_{c}.
\]
The linear spaces \( L^{n}_{\textbf{\textit{k}}_{\scriptscriptstyle  d}}(\mathbb{R}^{p,q};Cl_{p,q}) \) are defined as
\[
L^{n}_{\textbf{\textit{k}}_{\scriptscriptstyle  d}}(\mathbb{R}^{p,q};Cl_{p,q}) = \{ f : \mathbb{R}^{p,q} \rightarrow Cl_{p,q} : \vert f \vert_{\textbf{\textit{k}}_{\scriptscriptstyle  d},n} < \infty \}.
\]
\begin{remark}
It is important to note that the measure \( d\mu_{p,q} \) and the function space \( L^{n}_{\textbf{\textit{k}}_{\scriptscriptstyle  d}}(\mathbb{R}^{p,q};Cl_{p,q}) \) defined above differ from the measure \( w_{\textbf{\textit{k}}_{\scriptscriptstyle d}}(x)dx \) and the space \( L^{p}_{\textbf{\textit{k}}_{\scriptscriptstyle d}}(\R^{d}) \) introduced in Section 2. Specifically, \( d\mu_{p,q} \) is a product measure constructed from two distinct Dunkl measures associated with the root systems \( R_p \) and \( R_q \), and functions in \( L^{n}_{\textbf{\textit{k}}_{\scriptscriptstyle  d}}(\mathbb{R}^{p,q};Cl_{p,q}) \) take values in the Clifford algebra \( Cl_{p,q} \). In contrast, \( L^{p}_{\textbf{\textit{k}}_{\scriptscriptstyle d}}(\R^{d}) \) consists of scalar-valued functions on \( \mathbb{R}^d \) with the Dunkl measure \( w_{\textbf{\textit{k}}_{\scriptscriptstyle d}}(x)dx \). The two frameworks coincide only in the special case when \( p = d \) and \( q = 0 \) (or vice versa) and the Clifford algebra is reduced to the real numbers, but in general they are distinct.
\end{remark}
\subsection{Two-Sided Clifford Dunkl Transform}

\begin{definition}
Let \( a \) and \( b \) be elements in \( Cl_{p,q} \) such that \( a^{2} = b^{2} = -1 \). The two-sided Clifford Dunkl transform (CDT) of a function \( f \in L^{1}_{\textbf{\textit{k}}_{\scriptscriptstyle d}}(\mathbb{R}^{p,q};Cl_{p,q}) \), with respect to \(a\) and \(b\), is defined as:
\begin{equation}\label{GDQDT}
\mathcal{F}^{-a,-b} \lbrace f \rbrace (\textbf{\textit{y}}) = \int_{\mathbb{R}^{p,q}} E_{p, \textbf{\textit{k}}_{\scriptscriptstyle p}}(\textbf{\textit{x}}_{1},-a \textbf{\textit{y}}_{1})f(\textbf{\textit{x}}_{1},\textbf{\textit{x}}_{2})E_{q, \textbf{\textit{k}}_{\scriptscriptstyle q}}(\textbf{\textit{x}}_{2},-b \textbf{\textit{y}}_{2})\, d\mu_{p,q}(\textbf{\textit{x}}),
\end{equation}
where \( \textbf{\textit{y}}_{1} = (y_{1}, \dots, y_{p}) \) and \( \textbf{\textit{y}}_{2} = (y_{p+1}, \dots, y_{d}) \).
\end{definition}

The noncommutative nature of Clifford multiplication naturally leads to the development of two distinct forms of the CDT: the left-sided CDT and the right-sided CDT. The left-sided CDT is expressed as
\begin{equation}\label{LGDQDT}
\mathcal{F}_{l}^{-a,-b} \lbrace f \rbrace (\textbf{\textit{y}}) := \int_{\mathbb{R}^{p,q}} E_{p, \textbf{\textit{k}}_{\scriptscriptstyle  p}}(\textbf{\textit{x}}_{1},-a \textbf{\textit{y}}_{1})E_{q, \textbf{\textit{k}}_{\scriptscriptstyle  q}}(\textbf{\textit{x}}_{2},-b \textbf{\textit{y}}_{2})f(\textbf{\textit{x}}_{1},\textbf{\textit{x}}_{2})\, d\mu_{p,q}(\textbf{\textit{x}}),
\end{equation}
while the right-sided CDT is given by
\begin{equation}\label{RGDQDT}
\mathcal{F}_{r}^{-a,-b} \lbrace f \rbrace (\textbf{\textit{y}}) := \int_{\mathbb{R}^{p,q}} f(\textbf{\textit{x}}_{1},\textbf{\textit{x}}_{2})E_{p, \textbf{\textit{k}}_{\scriptscriptstyle  p}}(\textbf{\textit{x}}_{1},-a \textbf{\textit{y}}_{1})E_{q, \textbf{\textit{k}}_{\scriptscriptstyle  q}}(\textbf{\textit{x}}_{2},-b \textbf{\textit{y}}_{2})\, d\mu_{p,q}(\textbf{\textit{x}}).
\end{equation}
\noindent Note that by using the properties of the Dunkl transform and the Dunkl kernel, the study of the two previously introduced variants can be conducted similarly to the two-sided Clifford Dunkl transform. Consequently, the results obtained in the remainder of this paper can be readily extended to both variants.

\begin{remark}
The two-sided Clifford Dunkl transform exists for all integrable multivector-valued functions \( f \in L^{1}_{\textbf{\textit{k}}_{\scriptscriptstyle  d}}(\mathbb{R}^{p,q};Cl_{p,q}) \). Indeed, Equation (\ref{yy}) implies that
\begin{eqnarray*}
  \Vert \mathcal{F}^{-a,-b} \lbrace f \rbrace (\textbf{\textit{y}})   \Vert_{c}  
& \leq & \int_{\mathbb{R}^{p,q}}  \Vert E_{p, \textbf{\textit{k}}_{\scriptscriptstyle  p}}(\textbf{\textit{x}}_{1},-a \textbf{\textit{y}}_{1})  \Vert_{c}  \Vert f(\textbf{\textit{x}}_{1},\textbf{\textit{x}}_{2})  \Vert_{c}  \Vert E_{q, \textbf{\textit{k}}_{\scriptscriptstyle  q}}(\textbf{\textit{x}}_{2},-b \textbf{\textit{y}}_{2})  \Vert_{c} \, d\mu_{p,q}(\textbf{\textit{x}})\\
& \leq & \int_{\mathbb{R}^{p,q}}  \Vert f(\textbf{\textit{x}}_{1},\textbf{\textit{x}}_{2})  \Vert_{c}  \, d\mu_{p,q}(\textbf{\textit{x}}).
\end{eqnarray*}
Thus, \( \Vert \mathcal{F}^{-a,-b} \lbrace f \rbrace (\textbf{\textit{y}})  \Vert_{c} \) is finite, and therefore, the two-sided Clifford Dunkl transform is well-defined.
\end{remark}

\begin{remark}\label{hhjjkk}
The two-sided Clifford Dunkl transform with two square roots of \(-1\) in \( Cl_{p,q} \) encompasses several generalizations of the Fourier transform. Some notable examples include:
\begin{enumerate}
\item[$\bullet$]  If $a=b=i $, $\textbf{\textit{k}}_{p}=\textbf{\textit{k}}_{q}=0$  and $f \in L^{1}_{0}(\R^{d})$, the CDT reduces to the Euclidean Fourier transform on $\R^{d}$.
\item[$\bullet$] When $a=b=i$ and $f \in L^{1}_{\textbf{\textit{k}}_{\scriptscriptstyle d}}(\R^{d})$, the CDT is a Dunkl type transform.
\item[$\bullet$] In the case \( p=0 \) and \( q=2 \), the CDT reduces to the two-sided quaternion Dunkl transform \cite{key-fah}.
\item[$\bullet$] For \( p=0 \), \( q=2 \) and $\textbf{\textit{k}}_{p}=\textbf{\textit{k}}_{q}=0$, the CDT is just the quaternionic Fourier transform \cite{key-7, key-777}.
\item[$\bullet$] The CDT is the two-sided Clifford Fourier transform \cite{shi} when \( p=0 \) and $\textbf{\textit{k}}_{p}=\textbf{\textit{k}}_{q}=0$.
\item[$\bullet$] Depending on the choice of the phase functions \( u(\textbf{\textit{x}},\textbf{\textit{y}}) \) and \( v(\textbf{\textit{x}},\textbf{\textit{y}}) \), the general two-sided Clifford Fourier transform \cite[Definition (4.1)]{key-HITZ} is an example of CDT when \( u(\textbf{\textit{x}},\textbf{\textit{y}}) = \langle \textbf{\textit{x}}_{1},\textbf{\textit{y}}_{1} \rangle \) and \( v(\textbf{\textit{x}},\textbf{\textit{y}})= \langle \textbf{\textit{x}}_{2},\textbf{\textit{y}}_{2} \rangle \) and $\textbf{\textit{k}}_{p}=\textbf{\textit{k}}_{q}=0$.
\end{enumerate}
\end{remark}
\subsection{Essential Properties of the Two-Sided Clifford Dunkl Transform}
\begin{Th}[\textbf{Scalar Linearity}] 
Let \( \alpha, \beta \in \mathbb{R} \) and \( f, g \in L^{1}_{\textbf{\textit{k}}_{\scriptscriptstyle d}}(\mathbb{R}^{p,q};Cl_{p,q}) \). Then, the following holds:
\begin{align*}
\mathcal{F}^{-a,-b} \lbrace \alpha f + \beta g \rbrace (\textbf{y}) = \alpha \mathcal{F}^{-a,-b} \lbrace f \rbrace (\textbf{y}) + \beta \mathcal{F}^{-a,-b} \lbrace g \rbrace (\textbf{y}).
\end{align*}
\end{Th}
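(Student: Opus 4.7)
The plan is to prove scalar linearity directly from the definition (\ref{GDQDT}), exploiting the fact that the scalars $\alpha,\beta \in \mathbb{R}$ lie in the center of the Clifford algebra $Cl_{p,q}$. Concretely, I would substitute $\alpha f + \beta g$ into the integrand of $\mathcal{F}^{-a,-b}$ to obtain
\[
\int_{\mathbb{R}^{p,q}} E_{p, \textbf{\textit{k}}_{\scriptscriptstyle p}}(\textbf{\textit{x}}_{1},-a\textbf{\textit{y}}_{1})\bigl(\alpha f(\textbf{\textit{x}}_{1},\textbf{\textit{x}}_{2}) + \beta g(\textbf{\textit{x}}_{1},\textbf{\textit{x}}_{2})\bigr) E_{q, \textbf{\textit{k}}_{\scriptscriptstyle q}}(\textbf{\textit{x}}_{2},-b\textbf{\textit{y}}_{2})\, d\mu_{p,q}(\textbf{\textit{x}}).
\]

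Next I would distribute the middle factor, commute the real scalars $\alpha$ and $\beta$ past the Clifford-valued Dunkl kernels (this is legal precisely because $\mathbb{R}$ is central in $Cl_{p,q}$, so the noncommutativity issues that usually plague two-sided transforms do not arise here), and then apply linearity of the Lebesgue integral with respect to the measure $d\mu_{p,q}$. This splits the integral into the sum $\alpha\mathcal{F}^{-a,-b}\{f\}(\textbf{\textit{y}}) + \beta\mathcal{F}^{-a,-b}\{g\}(\textbf{\textit{y}})$.

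For this argument to be justified I must know that all integrals in sight are absolutely convergent. This follows from the hypothesis $f,g \in L^{1}_{\textbf{\textit{k}}_{\scriptscriptstyle d}}(\mathbb{R}^{p,q};Cl_{p,q})$ together with the pointwise bound $\|E_{p,\textbf{\textit{k}}_{\scriptscriptstyle p}}(\textbf{\textit{x}}_{1},-a\textbf{\textit{y}}_{1})\|_{c}\,\|E_{q,\textbf{\textit{k}}_{\scriptscriptstyle q}}(\textbf{\textit{x}}_{2},-b\textbf{\textit{y}}_{2})\|_{c} \leq 1$, already used in the well-definedness remark following (\ref{RGDQDT}) and ultimately relying on (\ref{yy}). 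There is no real obstacle to overcome; the only thing to be careful about is not accidentally invoking commutativity of nonreal Clifford elements, which the argument never does.
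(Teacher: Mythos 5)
Your proposal is correct and follows essentially the same route as the paper, whose proof is a one-line appeal to distributivity of the geometric product, commutativity of real scalars, and linearity of the integral. You merely make explicit the centrality of $\mathbb{R}$ in $Cl_{p,q}$ and the absolute-convergence check via the kernel bound, both of which are sound and consistent with the paper's remarks.
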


\begin{proof}
The result follows directly from the distributive property of the geometric product over addition, the commutativity of scalars, and the linearity of the integral.
\end{proof}
\begin{remark} 
For any multivector-valued function \(f\) in \(L^{1}_{\textbf{\textit{k}}_{\scriptscriptstyle d}}(\mathbb{R}^{p,q};Cl_{p,q})\), we have
\begin{align}\label{f11}
 f(\textbf{\textit{x}}_{1}, .) = c^{-1}_{\textbf{\textit{k}}_{\scriptscriptstyle p}}   \int_{\R^{p}} E_{p, \textbf{\textit{k}}_{\scriptscriptstyle p}}(\textbf{\textit{x}}_{1},i \textbf{\textit{y}}_{1})\mathcal{ F}_{\textbf{\textit{k}}_{\scriptscriptstyle p}}(f(\textbf{\textit{x}}_{1}, .))(\textbf{\textit{y}}_{1})d\mu^{\textbf{\textit{k}}_{p}}_{p}(\textbf{\textit{y}}_{1})    
\end{align}
and 
\begin{align}\label{f22}
 f(., \textbf{\textit{x}}_{2}) = c^{-1}_{\textbf{\textit{k}}_{\scriptscriptstyle q}} \int_{\R^{q}} \mathcal{ F}_{\textbf{\textit{k}}_{\scriptscriptstyle q}}(f(., \textbf{\textit{x}}_{2}))(\textbf{\textit{y}}_{2})E_{q, \textbf{\textit{k}}_{\scriptscriptstyle q}}(\textbf{\textit{x}}_{2},i \textbf{\textit{y}}_{2}) d\mu^{\textbf{\textit{k}}_{q}}_{q}(\textbf{\textit{y}}_{2}).
\end{align}
Indeed, any function \(f \in L^{1}_{\textbf{\textit{k}}_{\scriptscriptstyle d}}(\mathbb{R}^{p,q};Cl_{p,q})\) can be decomposed as
\begin{align*}
 f =  \sum_{A} f_{A} e_{A},
\end{align*}
where \(f_{A}\) are real-valued functions. Then, the Dunkl transform of the function \(f\) as a function on \(\R^{p}\) can be expressed as follows
\begin{eqnarray*}
\mathcal{ F}_{\textbf{\textit{k}}_{\scriptscriptstyle p}} (f(\textbf{\textit{x}}_{1},.)) (\textbf{\textit{y}}_{1})  &= & c_{\textbf{\textit{k}}_{\scriptscriptstyle p}} \int_{\mathbb{R}^{p}}f(\textbf{\textit{x}}_{1},.) E_{p, \textbf{\textit{k}}_{\scriptscriptstyle p}}(\textbf{\textit{x}}_{1},-i \textbf{\textit{y}}_{1})d\mu^{\textbf{\textit{k}}_{p}}_{p}(\textbf{\textit{x}}_{1})\\
&=& c_{\textbf{\textit{k}}_{\scriptscriptstyle p}} \sum_{A} e_{A} \int_{\mathbb{R}^{p}}f_{A}(\textbf{\textit{x}}_{1},.) E_{p, \textbf{\textit{k}}_{\scriptscriptstyle p}}(\textbf{\textit{x}}_{1},-i \textbf{\textit{y}}_{1})d\mu^{\textbf{\textit{k}}_{p}}_{p}(\textbf{\textit{x}}_{1}).
\end{eqnarray*}
Then, due to the inversion theorem for the Dunkl transform, we obtain
\begin{align*}
&\, \, \int_{\mathbb{R}^{p}} \mathcal{F}_{\textbf{\textit{k}}_{\scriptscriptstyle p}}(f(\textbf{\textit{x}}_{1},.))(\textbf{\textit{y}}_{1}) E_{p, \textbf{\textit{k}}_{\scriptscriptstyle p}}(\textbf{\textit{x}}_{1},i \textbf{\textit{y}}_{1}) d\mu^{\textbf{\textit{k}}_{p}}_{p}(\textbf{\textit{y}}_{1})\\
&= \int_{\mathbb{R}^{p}} \sum_{A} e_{A} \int_{\mathbb{R}^{p}} c_{\textbf{\textit{k}}_{\scriptscriptstyle p}} f_{A}(\textbf{\textit{x}}_{1},.) E_{p, \textbf{\textit{k}}_{\scriptscriptstyle p}}(\textbf{\textit{x}}_{1},-i \textbf{\textit{y}}_{1}) d\mu^{\textbf{\textit{k}}_{p}}_{p}(\textbf{\textit{x}}_{1}) E_{p, \textbf{\textit{k}}_{\scriptscriptstyle p}}(\textbf{\textit{x}}_{1},i \textbf{\textit{y}}_{1}) d\mu^{\textbf{\textit{k}}_{p}}_{p}(\textbf{\textit{y}}_{1}) \\
&= c^{-1}_{\textbf{\textit{k}}_{\scriptscriptstyle p}} \sum_{A} e_{A} \int_{\mathbb{R}^{p}} \int_{\mathbb{R}^{p}} c_{\textbf{\textit{k}}_{\scriptscriptstyle p}} f_{A}(\textbf{\textit{x}}_{1},.) E_{p, \textbf{\textit{k}}_{\scriptscriptstyle p}}(\textbf{\textit{x}}_{1},-i \textbf{\textit{y}}_{1}) E_{p, \textbf{\textit{k}}_{\scriptscriptstyle p}}(\textbf{\textit{x}}_{1},i \textbf{\textit{y}}_{1}) d\mu^{\textbf{\textit{k}}_{p}}_{p}(\textbf{\textit{x}}_{1}) d\mu^{\textbf{\textit{k}}_{p}}_{p}(\textbf{\textit{y}}_{1}) \\
&= c^{-1}_{\textbf{\textit{k}}_{\scriptscriptstyle p}} \sum_{A} e_{A} f_{A}(\textbf{\textit{x}}_{1},.) \\
&= c^{-1}_{\textbf{\textit{k}}_{\scriptscriptstyle p}} f(\textbf{\textit{x}}_{1},.).
\end{align*}
Similarly, we have
 
\begin{align*}
 f(., \textbf{\textit{x}}_{2}) =c^{-1}_{\textbf{\textit{k}}_{\scriptscriptstyle q}} \int_{\R^{q}} \mathcal{ F}_{\textbf{\textit{k}}_{\scriptscriptstyle q}}(f(., \textbf{\textit{x}}_{2}))(\textbf{\textit{y}}_{2})E_{q, \textbf{\textit{k}}_{\scriptscriptstyle q}}(\textbf{\textit{x}}_{2},i \textbf{\textit{y}}_{2}) d\mu^{\textbf{\textit{k}}_{q}}_{q}(\textbf{\textit{y}}_{2}).
\end{align*}
\end{remark}
\begin{proposition}[$\textbf{CDT  of a Gaussian function}$]
Let \( \delta \) be a positive scalar constant. Then,
\begin{align}\label{GGQDT}
\mathcal{F}^{-a,-b} \left\{ e^{- \delta \parallel \textbf{\textit{x}} \parallel_{c}^{2}} \right\} (\textbf{\textit{y}}) = \frac{1}{(2\delta)^{\gamma + \frac{d}{2}}} e^{-\frac{\parallel \textbf{\textit{y}} \parallel_{c}^{2}}{4\delta}}.
\end{align}
\end{proposition}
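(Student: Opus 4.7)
The plan is to reduce the computation to the classical scalar Dunkl transform identity (\ref{ll66}) by exploiting the product structure of the Gaussian, the splitting $\mathbb{R}^{p,q}=\mathbb{R}^p\times\mathbb{R}^q$, and the factorization of the measure $d\mu_{p,q}=d\mu_p^{\textbf{\textit{k}}_p}\,d\mu_q^{\textbf{\textit{k}}_q}$.

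First I observe that $\|\textbf{\textit{x}}\|_c^2=\|\textbf{\textit{x}}_1\|^2+\|\textbf{\textit{x}}_2\|^2$, so $e^{-\delta\|\textbf{\textit{x}}\|_c^2}=e^{-\delta\|\textbf{\textit{x}}_1\|^2}\,e^{-\delta\|\textbf{\textit{x}}_2\|^2}$ is real-valued, hence central in $Cl_{p,q}$. Substituting into (\ref{GDQDT}) and invoking Fubini therefore factors the CDT as
\[
\mathcal{F}^{-a,-b}\{e^{-\delta\|\textbf{\textit{x}}\|_c^2}\}(\textbf{\textit{y}}) = I_a(\textbf{\textit{y}}_1)\,I_b(\textbf{\textit{y}}_2),
\]
with $I_a(\textbf{\textit{y}}_1):=\int_{\mathbb{R}^p}E_{p,\textbf{\textit{k}}_p}(\textbf{\textit{x}}_1,-a\textbf{\textit{y}}_1)\,e^{-\delta\|\textbf{\textit{x}}_1\|^2}\,d\mu_p^{\textbf{\textit{k}}_p}(\textbf{\textit{x}}_1)$ and $I_b$ defined symmetrically.

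The crux is showing that each factor depends on its Clifford parameter only through its square. Expanding $E_{p,\textbf{\textit{k}}_p}(\textbf{\textit{x}}_1,\cdot)$ as a power series with homogeneous real-polynomial terms $p_n(\textbf{\textit{x}}_1,\textbf{\textit{y}}_1)$ of degree $n$ in each argument, the substitution $\textbf{\textit{z}}=-a\textbf{\textit{y}}_1$ pulls the scalar $(-a)^n$ outside each term. Because $w_{\textbf{\textit{k}}_p}$ and the Gaussian are both even in $\textbf{\textit{x}}_1$, the change of variable $\textbf{\textit{x}}_1\mapsto-\textbf{\textit{x}}_1$ kills all odd-$n$ contributions, so only even powers $(-a)^{2k}=(-1)^k$ survive. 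In particular $I_a(\textbf{\textit{y}}_1)$ is a real scalar, and its value coincides with the integral obtained by replacing $a$ with the ordinary imaginary unit $i$. Applying (\ref{ll66}) then gives $I_a(\textbf{\textit{y}}_1)=(2\delta)^{-\gamma_p-p/2}\,e^{-\|\textbf{\textit{y}}_1\|^2/(4\delta)}$, up to the Mehta constant dictated by the CDT normalization; termwise integration is legitimate by the growth estimate (\ref{DK}) and dominated convergence. The same reasoning handles $I_b(\textbf{\textit{y}}_2)$.

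Combining the two factors and using $\gamma=\gamma_p+\gamma_q$, $d=p+q$, and $\|\textbf{\textit{y}}\|_c^2=\|\textbf{\textit{y}}_1\|^2+\|\textbf{\textit{y}}_2\|^2$ yields the stated formula. The one genuinely delicate point is legitimizing the substitution $i\mapsto a$ inside the Dunkl kernel; the parity argument handles it cleanly, since only even powers of any square root of $-1$ appear and those are algebraically identical regardless of the chosen root. As a side benefit, the scalar nature of $I_a$ and $I_b$ means no ordering issue arises when concatenating them, even though $a$ and $b$ need not commute in $Cl_{p,q}$.
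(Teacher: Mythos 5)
Your proof is correct and follows the same skeleton as the paper's: split the Gaussian as $e^{-\delta\|\textbf{\textit{x}}_1\|^2}e^{-\delta\|\textbf{\textit{x}}_2\|^2}$, apply Fubini to factor the integral over $\mathbb{R}^{p,q}$ into a product of integrals over $\mathbb{R}^p$ and $\mathbb{R}^q$, and evaluate each factor via the scalar identity (\ref{ll66}). The one substantive difference is that the paper applies (\ref{ll66}) to the factors \emph{as stated}, i.e.\ it silently assumes the formula survives the replacement of the imaginary unit $i$ by an arbitrary square root $a$ (resp.\ $b$) of $-1$ in $Cl_{p,q}$, whereas you actually justify this step: expanding the Dunkl kernel into its homogeneous components, noting that the weight $w_{\textbf{\textit{k}}_p}$ and the Gaussian are even so the odd-degree terms integrate to zero, and observing that only the powers $(-a)^{2k}=(-1)^k$ survive — which are independent of the chosen root. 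This parity argument is a genuine improvement in rigor over the paper's proof, and it also explains why each factor is a real scalar, so that no noncommutativity issue arises when the two factors are multiplied. One further remark: you correctly flag the Mehta-constant discrepancy ("up to the Mehta constant dictated by the CDT normalization"); since the CDT in (\ref{GDQDT}) carries no normalizing constant while $\mathcal{F}_{\textbf{\textit{k}}_p}$ in (\ref{ll66}) does, a literal computation produces an extra factor $c^{-1}_{\textbf{\textit{k}}_p}c^{-1}_{\textbf{\textit{k}}_q}$ that the paper's own proof also drops — so this is an inconsistency inherited from the paper, not a flaw in your argument.
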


\begin{proof}
Using (\ref{ll66}) and the Fubini theorem, we obtain
\begin{align*}
\mathcal{F}^{-a,-b} \left\{ e^{- \delta \parallel \textbf{\textit{x}} \parallel_{c}^{2}} \right\} (\textbf{\textit{y}}) 
&= \int_{\mathbb{R}^{p,q}} E_{p, \textbf{\textit{k}}_{\scriptscriptstyle p}}(\textbf{\textit{x}}_{1}, -a \textbf{\textit{y}}_{1}) e^{- \delta \parallel \textbf{\textit{x}} \parallel_{c}^{2}} E_{q, \textbf{\textit{k}}_{\scriptscriptstyle q}}(\textbf{\textit{x}}_{2}, -b \textbf{\textit{y}}_{2}) d\mu_{p,q}(\textbf{\textit{x}}) \\
&= \int_{\mathbb{R}^{p}} E_{p, \textbf{\textit{k}}_{\scriptscriptstyle p}}(\textbf{\textit{x}}_{1}, -a \textbf{\textit{y}}_{1}) e^{- \delta \parallel \textbf{\textit{x}}_{1} \parallel_{c}^{2}} d\mu^{\textbf{\textit{k}}_{p}}_{p}(\textbf{\textit{x}}_{1}) \\
&\quad \times \int_{\mathbb{R}^{q}} e^{- \delta \parallel \textbf{\textit{x}}_{2} \parallel_{c}^{2}} E_{q, \textbf{\textit{k}}_{\scriptscriptstyle q}}(\textbf{\textit{x}}_{2}, -b \textbf{\textit{y}}_{2}) d\mu^{\textbf{\textit{k}}_{q}}_{q}(\textbf{\textit{x}}_{2}) \\
&= \frac{1}{(2\delta)^{\gamma_{p} + \frac{p}{2}}} e^{-\frac{\parallel \textbf{\textit{y}}_{1} \parallel_{c}^{2}}{4\delta}} 
\frac{1}{(2\delta)^{\gamma_{q} + \frac{q}{2}}} e^{-\frac{\parallel \textbf{\textit{y}}_{2} \parallel_{c}^{2}}{4\delta}} \\
&= \frac{1}{(2\delta)^{\gamma + \frac{d}{2}}} e^{-\frac{\parallel \textbf{\textit{y}} \parallel_{c}^{2}}{4\delta}}.
\end{align*}
\end{proof}
\subsubsection*{Inversion formula}
\begin{Th}[$\mathbf{Inversion ~ Formula}$]\label{IVF}
Let \( f \in L^{1}_{\textbf{\textit{k}}_{\scriptscriptstyle d}}(\mathbb{R}^{p,q};Cl_{p,q}) \) such that \( \mathcal{F}^{-a,-b} \lbrace f \rbrace \in L^{1}_{\textbf{\textit{k}}_{\scriptscriptstyle d}}(\mathbb{R}^{p,q};Cl_{p,q}) \). Then, the inversion formula is given by:
\begin{align}\label{inversin}
f(\textbf{\textit{x}}_{1},\textbf{\textit{x}}_{2}) = c^{2}_{\textbf{\textit{k}}_{\scriptscriptstyle p}}c^{2}_{\textbf{\textit{k}}_{\scriptscriptstyle q}} \int_{\mathbb{R}^{p,q}} E_{p, \textbf{\textit{k}}_{\scriptscriptstyle p}}(\textbf{\textit{x}}_{1},a \textbf{\textit{y}}_{1}) \mathcal{F}^{-a,-b} \lbrace f (\textbf{\textit{x}}_{1},\textbf{\textit{x}}_{2}) \rbrace(\textbf{\textit{y}}_{1},\textbf{\textit{y}}_{2}) E_{q, \textbf{\textit{k}}_{\scriptscriptstyle q}}(\textbf{\textit{x}}_{2},b \textbf{\textit{y}}_{2}) d\mu_{p,q}(\textbf{\textit{y}}).   
\end{align}
\end{Th}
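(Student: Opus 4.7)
The strategy is to reduce the formula to two applications of the classical Dunkl inversion theorem \eqref{IVD}, one in the variable \(\textbf{\textit{x}}_{1}\in\mathbb{R}^{p}\) (acting from the left) and one in \(\textbf{\textit{x}}_{2}\in\mathbb{R}^{q}\) (acting from the right). The key algebraic input is that, since \(a^{2}=b^{2}=-1\), the real subalgebras \(\mathbb{R}[a]\) and \(\mathbb{R}[b]\) of \(Cl_{p,q}\) are each isomorphic to \(\mathbb{C}\); hence \(E_{p,\textbf{\textit{k}}_{\scriptscriptstyle p}}(\textbf{\textit{x}}_{1},\pm a\textbf{\textit{y}}_{1})\in\mathbb{R}[a]\) and \(E_{q,\textbf{\textit{k}}_{\scriptscriptstyle q}}(\textbf{\textit{x}}_{2},\pm b\textbf{\textit{y}}_{2})\in\mathbb{R}[b]\), and the scalar identity \eqref{IVD} transfers verbatim to each subalgebra by the formal substitution \(i\mapsto a\) (resp.\ \(i\mapsto b\)), since only the relation \(i^{2}=-1\) is used in its proof.

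Starting from the right-hand side of \eqref{inversin}, I would first substitute the definition \eqref{GDQDT} and apply Fubini's theorem---justified by the two \(L^{1}\) assumptions together with the uniform bound \eqref{yy} on each Dunkl kernel---to rewrite it as
\[
c^{2}_{\textbf{\textit{k}}_{\scriptscriptstyle p}} c^{2}_{\textbf{\textit{k}}_{\scriptscriptstyle q}} \int \int E_{p, \textbf{\textit{k}}_{\scriptscriptstyle p}}(\textbf{\textit{x}}_{1},a\textbf{\textit{y}}_{1}) E_{p, \textbf{\textit{k}}_{\scriptscriptstyle p}}(\textbf{\textit{x}}'_{1},-a\textbf{\textit{y}}_{1}) f(\textbf{\textit{x}}'_{1},\textbf{\textit{x}}'_{2}) E_{q, \textbf{\textit{k}}_{\scriptscriptstyle q}}(\textbf{\textit{x}}'_{2},-b\textbf{\textit{y}}_{2}) E_{q, \textbf{\textit{k}}_{\scriptscriptstyle q}}(\textbf{\textit{x}}_{2},b\textbf{\textit{y}}_{2}) \, d\mu_{p,q}(\textbf{\textit{x}}') \, d\mu_{p,q}(\textbf{\textit{y}}).
\]
The two factors on the left of \(f\) depend only on \(\textbf{\textit{y}}_{1}\) and lie in \(\mathbb{R}[a]\); those on its right depend only on \(\textbf{\textit{y}}_{2}\) and lie in \(\mathbb{R}[b]\). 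Integrating first in \(\textbf{\textit{y}}_{1}\) and then in \(\textbf{\textit{y}}_{2}\), each block can be pulled to its own side of \(f\) without invoking any commutation relation, since it is constant in the complementary integration variables and the measures are real.

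Next, I would decompose \(f=\sum_{A} f_{A} e_{A}\) with \(f_{A}\in\mathbb{R}\) and exploit that each scalar \(f_{A}\) commutes with every Clifford factor. The scalar Dunkl inversion identity obtained by combining \eqref{IVD} with the definition of \(\mathcal{F}_{\textbf{\textit{k}}_{\scriptscriptstyle p}}\) and Fubini reads
\[
f_{A}(\textbf{\textit{x}}_{1},\textbf{\textit{x}}'_{2}) = c^{2}_{\textbf{\textit{k}}_{\scriptscriptstyle p}} \int \int E_{p, \textbf{\textit{k}}_{\scriptscriptstyle p}}(\textbf{\textit{x}}_{1},i\textbf{\textit{y}}_{1}) E_{p, \textbf{\textit{k}}_{\scriptscriptstyle p}}(\textbf{\textit{x}}'_{1},-i\textbf{\textit{y}}_{1}) f_{A}(\textbf{\textit{x}}'_{1},\textbf{\textit{x}}'_{2}) \, d\mu^{\textbf{\textit{k}}_{p}}_{p}(\textbf{\textit{y}}_{1}) \, d\mu^{\textbf{\textit{k}}_{p}}_{p}(\textbf{\textit{x}}'_{1}),
\]
and, translated to \(\mathbb{R}[a]\cong\mathbb{C}\) via \(i\mapsto a\), it collapses the \(\textbf{\textit{y}}_{1}\)- and \(\textbf{\textit{x}}'_{1}\)-integrations. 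A symmetric application with \(i\mapsto b\) in \(\mathbb{R}[b]\cong\mathbb{C}\) then collapses the \(\textbf{\textit{y}}_{2}\)- and \(\textbf{\textit{x}}'_{2}\)-integrations, producing \(\sum_{A} f_{A}(\textbf{\textit{x}}_{1},\textbf{\textit{x}}_{2}) e_{A} = f(\textbf{\textit{x}}_{1},\textbf{\textit{x}}_{2})\), as required.

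The principal obstacle is the non-commutativity: in general \(a\), \(b\), and the basis multivectors \(e_{A}\) do not pairwise commute, so Clifford factors cannot be transported freely. The plan bypasses this by never moving an \(\mathbb{R}[a]\)- or \(\mathbb{R}[b]\)-valued block across \(f\), preserving the left-\(a\)/right-\(b\) architecture of \eqref{GDQDT} throughout the argument, and by performing all rearrangements only with the real components \(f_{A}\). Verifying the substitution \(i\mapsto a,b\) in \eqref{IVD} is then routine: the Dunkl kernel is entire in its second argument with real Taylor coefficients, so its series manipulations depend only on the relation \(a^{2}=b^{2}=-1\).
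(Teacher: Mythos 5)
Your proposal is correct and follows essentially the same route as the paper: substitute the definition of \(\mathcal{F}^{-a,-b}\), apply Fubini, and perform the Dunkl inversion separately on the left in \(\textbf{\textit{x}}_{1}\) and on the right in \(\textbf{\textit{x}}_{2}\) (the paper packages these two partial inversions as Equations (\ref{f11}) and (\ref{f22}) in the preceding remark), picking up one factor \(c^{-2}_{\textbf{\textit{k}}_{\scriptscriptstyle p}}c^{-2}_{\textbf{\textit{k}}_{\scriptscriptstyle q}}\) in total. Your explicit justification of the substitution \(i\mapsto a,b\) via \(\mathbb{R}[a]\cong\mathbb{R}[b]\cong\mathbb{C}\) and your care in never transporting \(\mathbb{R}[a]\)- or \(\mathbb{R}[b]\)-valued blocks across \(f\) are points the paper leaves implicit, but the argument is the same.
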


\begin{proof}
From Equations (\ref{f11}) and (\ref{f22}), we obtain
\begin{align*}
& \, \, \int_{\mathbb{R}^{p,q}} E_{p, \textbf{\textit{k}}_{\scriptscriptstyle p}}(\textbf{\textit{x}}_{1},a \textbf{\textit{y}}_{1}) \mathcal{F}^{-a,-b} \lbrace f (\textbf{\textit{x}}_{1},\textbf{\textit{x}}_{2}) \rbrace (\textbf{\textit{y}}_{1},\textbf{\textit{y}}_{2}) E_{q, \textbf{\textit{k}}_{\scriptscriptstyle q}}(\textbf{\textit{x}}_{2},b \textbf{\textit{y}}_{2}) d\mu_{p,q}(\textbf{\textit{y}}) \\
&= \int_{\mathbb{R}^{p,q}} E_{p, \textbf{\textit{k}}_{\scriptscriptstyle p}}(\textbf{\textit{x}}_{1},a \textbf{\textit{y}}_{1}) \int_{\mathbb{R}^{p,q}} E_{p, \textbf{\textit{k}}_{\scriptscriptstyle p}}(\textbf{\textit{x}}_{1},-a \textbf{\textit{y}}_{1}) f(\textbf{\textit{x}}_{1},\textbf{\textit{x}}_{2}) E_{q, \textbf{\textit{k}}_{\scriptscriptstyle q}}(\textbf{\textit{x}}_{2},-b \textbf{\textit{y}}_{2}) d\mu_{p,q}(\textbf{\textit{x}}) \\
& \quad \times E_{q, \textbf{\textit{k}}_{\scriptscriptstyle q}}(\textbf{\textit{x}}_{2},b \textbf{\textit{y}}_{2}) d\mu_{p,q}(\textbf{\textit{y}}) \\
&= c^{-1}_{\textbf{\textit{k}}_{\scriptscriptstyle p}} c_{\textbf{\textit{k}}_{\scriptscriptstyle p}} \int_{\mathbb{R}^{p}} \int_{\mathbb{R}^{p}} E_{p, \textbf{\textit{k}}_{\scriptscriptstyle p}}(\textbf{\textit{x}}_{1},a \textbf{\textit{y}}_{1}) E_{p, \textbf{\textit{k}}_{\scriptscriptstyle p}}(\textbf{\textit{x}}_{1},-a \textbf{\textit{y}}_{1}) d\mu^{\textbf{\textit{k}}_{p}}_{p}(\textbf{\textit{x}}_{1}) d\mu^{\textbf{\textit{k}}_{p}}_{p}(\textbf{\textit{y}}_{1}) \\
&\quad \times c^{-1}_{\textbf{\textit{k}}_{\scriptscriptstyle q}} c_{\textbf{\textit{k}}_{\scriptscriptstyle q}} \int_{\mathbb{R}^{q}} \int_{\mathbb{R}^{q}} f(\textbf{\textit{x}}_{1},\textbf{\textit{x}}_{2}) E_{q, \textbf{\textit{k}}_{\scriptscriptstyle q}}(\textbf{\textit{x}}_{2},-b \textbf{\textit{y}}_{2}) E_{q, \textbf{\textit{k}}_{\scriptscriptstyle q}}(\textbf{\textit{x}}_{2},b \textbf{\textit{y}}_{2}) d\mu^{\textbf{\textit{k}}_{q}}_{q}(\textbf{\textit{x}}_{2}) d\mu^{\textbf{\textit{k}}_{q}}_{q}(\textbf{\textit{y}}_{2}) \\
&= c^{-1}_{\textbf{\textit{k}}_{\scriptscriptstyle p}} c^{-1}_{\textbf{\textit{k}}_{\scriptscriptstyle p}} c^{-1}_{\textbf{\textit{k}}_{\scriptscriptstyle q}} \int_{\mathbb{R}^{p}} \int_{\mathbb{R}^{p}} E_{p, \textbf{\textit{k}}_{\scriptscriptstyle p}}(\textbf{\textit{x}}_{1},a \textbf{\textit{y}}_{1}) E_{p, \textbf{\textit{k}}_{\scriptscriptstyle p}}(\textbf{\textit{x}}_{1},-a \textbf{\textit{y}}_{1}) f(\textbf{\textit{x}}_{1},\textbf{\textit{x}}_{2}) d\mu^{\textbf{\textit{k}}_{p}}_{p}(\textbf{\textit{x}}_{1}) d\mu^{\textbf{\textit{k}}_{p}}_{p}(\textbf{\textit{y}}_{1}) \\
&= c^{-1}_{\textbf{\textit{k}}_{\scriptscriptstyle p}} c^{-1}_{\textbf{\textit{k}}_{\scriptscriptstyle p}} c^{-1}_{\textbf{\textit{k}}_{\scriptscriptstyle q}} c^{-1}_{\textbf{\textit{k}}_{\scriptscriptstyle q}} f(\textbf{\textit{x}}_{1},\textbf{\textit{x}}_{2}).
\end{align*}
The proof is complete.

\end{proof}
\subsubsection*{Plancherel Formula}
Before delving into the Plancherel formula, we first establish some fundamental results concerning the Clifford eigenfunctions of certain function families. These results are instrumental in proving the Plancherel formula for the CDT.
\begin{lemma}\label{CGFL}
The set
\begin{align}\label{CGF}
\lbrace h_{u}(\textbf{\textit{x}}_{1}) h_{v}(\textbf{\textit{x}}_{2}) : u \in \mathbb{Z}_{+}^{p}, \, v \in \mathbb{Z}_{+}^{q} \rbrace
\end{align}
is a generating family for the weighted space \( L^{2}_{\textbf{\textit{k}}_{\scriptscriptstyle d}}(\mathbb{R}^{p,q};Cl_{p,q}) \).
\end{lemma}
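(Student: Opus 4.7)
My plan is to reduce the multivector-valued completeness problem on $\mathbb{R}^{p,q}$ to two independent scalar Dunkl $L^{2}$-completeness statements on $\mathbb{R}^{p}$ and $\mathbb{R}^{q}$, and then reassemble the pieces via the Clifford component decomposition. The key ingredient from Section 2 is property (iii) of the Dunkl transform, which supplies an orthogonal basis $\{h_{v}\}$ of Dunkl eigenfunctions in each scalar factor space $L^{2}_{\textbf{\textit{k}}_{\scriptscriptstyle p}}(\mathbb{R}^{p})$ and $L^{2}_{\textbf{\textit{k}}_{\scriptscriptstyle q}}(\mathbb{R}^{q})$.

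First I would take an arbitrary $f \in L^{2}_{\textbf{\textit{k}}_{\scriptscriptstyle d}}(\mathbb{R}^{p,q}; Cl_{p,q})$ and expand it as $f = \sum_{A} f_{A}\, e_{A}$ with real-valued components. The defining formula for the norm $|\cdot|_{\textbf{\textit{k}}_{\scriptscriptstyle d}, 2}$ combined with the identity $\| f \|_{c}^{2} = \sum_{A} f_{A}^{2}$ shows that every component $f_{A}$ lies in the scalar weighted space $L^{2}(\mathbb{R}^{p,q}, d\mu_{p,q})$, so the Clifford-valued problem reduces, modulo a finite sum over the $2^{d}$ indices $A$, to a scalar density problem. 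Next I would exploit the product structure $d\mu_{p,q} = d\mu^{\textbf{\textit{k}}_{p}}_{p} \otimes d\mu^{\textbf{\textit{k}}_{q}}_{q}$ and the standard Hilbert tensor product identification $L^{2}(\mathbb{R}^{p,q}, d\mu_{p,q}) \cong L^{2}_{\textbf{\textit{k}}_{\scriptscriptstyle p}}(\mathbb{R}^{p}) \otimes L^{2}_{\textbf{\textit{k}}_{\scriptscriptstyle q}}(\mathbb{R}^{q})$, justified by Fubini--Tonelli. Under this identification, the elementary tensors $\{ h_{u}(\textbf{\textit{x}}_{1})\, h_{v}(\textbf{\textit{x}}_{2}) : u \in \mathbb{Z}_{+}^{p},\, v \in \mathbb{Z}_{+}^{q} \}$ inherit the Hilbert basis property from the two factor bases via the classical result on tensor products of orthogonal bases.

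To close the argument, each scalar component $f_{A}$ can then be approximated in $L^{2}$-norm by finite linear combinations $\sum_{u,v} c^{A}_{u,v}\, h_{u}(\textbf{\textit{x}}_{1}) h_{v}(\textbf{\textit{x}}_{2})$; multiplying by $e_{A}$ and summing over the finite index set $A$ yields a Clifford-linear combination of the proposed generators converging to $f$ in $| \cdot |_{\textbf{\textit{k}}_{\scriptscriptstyle d}, 2}$. The step I expect to be the main obstacle is the tensor-product identification of the product-measure scalar $L^{2}$ space with the Hilbert tensor product of the two Dunkl $L^{2}$ factors, since the Dunkl weights are only $\sigma$-finite and I would have to check that the span of simple tensors is dense before concluding that the product family inherits the basis property. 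Once that step is in place, the Clifford component decomposition is essentially bookkeeping and the conclusion follows immediately.
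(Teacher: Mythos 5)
Your proposal is correct and follows essentially the same route as the paper: reduce to the real-valued components $f_{A}$, exploit the product structure of $d\mu_{p,q}$ to expand in the family $h_{u}(\textbf{\textit{x}}_{1})h_{v}(\textbf{\textit{x}}_{2})$, and reassemble the Clifford coefficients at the end. The only cosmetic difference is that the paper carries out the separation of variables as an iterated expansion (first in $\textbf{\textit{x}}_{1}$ with $\textbf{\textit{x}}_{2}$-dependent coefficients $M_{A}^{v}(\textbf{\textit{x}}_{2})$, then expanding those), whereas you invoke the Hilbert tensor-product identification $L^{2}(d\mu_{p,q})\cong L^{2}_{\textbf{\textit{k}}_{\scriptscriptstyle p}}(\mathbb{R}^{p})\otimes L^{2}_{\textbf{\textit{k}}_{\scriptscriptstyle q}}(\mathbb{R}^{q})$ — a legitimate packaging of the same argument, and the $\sigma$-finiteness worry you raise is harmless since the Dunkl weights are locally integrable densities with respect to Lebesgue measure.
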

\begin{proof}
Let \( f \in L^{2}_{\textbf{\textit{k}}_{\scriptscriptstyle d}}(\mathbb{R}^{p,q};Cl_{p,q}) \) be a multivector-valued function. We can express \( f \) as a sum of real-valued component functions \( f_{A} \) multiplied by the corresponding basis elements \( e_{A} \):
\begin{align}\label{MVF}
f(\textbf{\textit{x}}_{1}, \textbf{\textit{x}}_{2}) = \sum_{A} f_{A}(\textbf{\textit{x}}_{1}, \textbf{\textit{x}}_{2}) e_{A}.
\end{align}
Here, \( f_{A} \) are real-valued functions, and it follows that \( f_{A} \in L^{2}_{\textbf{\textit{k}}_{\scriptscriptstyle d}}(\mathbb{R}^{d}) \).

For any fixed \( \textbf{\textit{x}}_{2} \in \mathbb{R}^{q} \), each square-integrable function \( f_{A}(\textbf{\textit{x}}_{1}, \textbf{\textit{x}}_{2}) \) can be expanded using the eigenfunctions  of the Dunkl transform \( \lbrace h_{v}: \, v \in \mathbb{Z}_{+}^{p} \rbrace \) as follows:
\begin{align}\label{RVF}
f_{A}(\textbf{\textit{x}}_{1}, \textbf{\textit{x}}_{2}) = \sum_{v \in \mathbb{Z}_{+}^{p}} M_{A}^{v}(\textbf{\textit{x}}_{2}) h_{v}(\textbf{\textit{x}}_{1}),
\end{align}
where the coefficients \( M_{A}^{v}(\textbf{\textit{x}}_{2}) \) are given by the integral
\begin{align*}
M_{A}^{v}(\textbf{\textit{x}}_{2}) = \int_{\mathbb{R}^{p}} h_{v}(\textbf{\textit{x}}_{1}) f_{A}(\textbf{\textit{x}}_{1}, \textbf{\textit{x}}_{2}) d\mu^{\textbf{\textit{k}}_{p}}_{p}(\textbf{\textit{x}}_{1}).
\end{align*}
Since \( M_{A}^{v}(\textbf{\textit{x}}_{2}) \) is square-integrable as a function of \( \textbf{\textit{x}}_{2} \), we can further expand it as
\begin{align*}
M_{A}^{v}(\textbf{\textit{x}}_{2}) = \sum_{u \in \mathbb{Z}_{+}^{q}} M_{A}^{v,u} h_{u}(\textbf{\textit{x}}_{2}).
\end{align*}
Substituting this expression back into Equation~\eqref{RVF} yields
\begin{align}\label{GCF}
f_{A}(\textbf{\textit{x}}_{1}, \textbf{\textit{x}}_{2}) = \sum_{v \in \mathbb{Z}_{+}^{p}} \sum_{u \in \mathbb{Z}_{+}^{q}} M_{A}^{v,u} h_{u}(\textbf{\textit{x}}_{2}) h_{v}(\textbf{\textit{x}}_{1}).
\end{align}
By substituting the expression for \( f_{A} \) from Equation~\eqref{GCF} into Equation~\eqref{MVF}, we derive
\begin{align}
f(\textbf{\textit{x}}_{1}, \textbf{\textit{x}}_{2}) &= \sum_{A} \sum_{v \in \mathbb{Z}_{+}^{p}} \sum_{u \in \mathbb{Z}_{+}^{q}} M_{A}^{v,u} h_{u}(\textbf{\textit{x}}_{2}) h_{v}(\textbf{\textit{x}}_{1}) e_{A}\\ \nonumber
&= \sum_{v \in \mathbb{Z}_{+}^{p}} \sum_{u \in \mathbb{Z}_{+}^{q}} h_{u}(\textbf{\textit{x}}_{2}) h_{v}(\textbf{\textit{x}}_{1}) \sum_{A} M_{A}^{v,u} e_{A},
\end{align}
which completes the proof.
\end{proof}
\begin{lemma}\label{EBL}
The eigenfunctions of the CDT are given by the functions $h_{u} h_{v}$ with the following (Clifford) eigenvalues: 
\begin{align}\label{EB}
\mathcal{ F}^{-a,-b} \lbrace h_{u} h_{v}  \rbrace ( \textbf{\textit{y}}_{1},  \textbf{\textit{y}}_{2}) = c^{-2}_{\textbf{\textit{k}}_{\scriptscriptstyle p}} \, c^{-2}_{\textbf{\textit{k}}_{\scriptscriptstyle q}} \, 2^{ \gamma_{p} +\frac{p}{2}} \,  2^{ \gamma_{q} +\frac{q}{2}}  (-a)^{\textbf{\textit{l}}( v ) }  (-b)^{\textbf{\textit{l}}( u ) } h_{v}(\textbf{\textit{y}}_{1})h_{u}(\textbf{\textit{y}}_{2}).
\end{align}
\end{lemma}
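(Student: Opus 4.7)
The plan is to reduce $\mathcal{F}^{-a,-b}\lbrace h_u h_v \rbrace$ to two independent one-variable integrals, each of which is an eigenfunction relation for the classical Dunkl transform (\ref{tttt}) extended to Clifford square roots of $-1$. First, substituting $f(\textbf{\textit{x}}_1,\textbf{\textit{x}}_2) = h_u(\textbf{\textit{x}}_1)\, h_v(\textbf{\textit{x}}_2)$ into (\ref{GDQDT}) and using that the real-valued scalars $h_u(\textbf{\textit{x}}_1)$ and $h_v(\textbf{\textit{x}}_2)$ commute with every element of $Cl_{p,q}$, I rearrange the integrand so that its $\textbf{\textit{x}}_1$- and $\textbf{\textit{x}}_2$-dependences separate. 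Fubini's theorem---with integrability guaranteed by the kernel bound (\ref{yy}) combined with the Gaussian decay of the generalized Hermite functions---then yields
\[
\mathcal{F}^{-a,-b}\lbrace h_u h_v \rbrace(\textbf{\textit{y}}_1,\textbf{\textit{y}}_2) \;=\; I_1(\textbf{\textit{y}}_1)\, I_2(\textbf{\textit{y}}_2),
\]
where $I_1(\textbf{\textit{y}}_1) = \int_{\mathbb{R}^p} E_{p,\textbf{\textit{k}}_{\scriptscriptstyle p}}(\textbf{\textit{x}}_1,-a\textbf{\textit{y}}_1)\, h_u(\textbf{\textit{x}}_1)\, d\mu^{\textbf{\textit{k}}_p}_p(\textbf{\textit{x}}_1)$ and $I_2$ is its $(q,b,h_v)$-counterpart.

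The central step is a Clifford analogue of the scalar identity (\ref{tttt}): for every $\zeta \in Cl_{p,q}$ with $\zeta^2 = -1$,
\[
\int_{\mathbb{R}^p} h_u(x)\, E_{p,\textbf{\textit{k}}_{\scriptscriptstyle p}}(x, -\zeta y)\, d\mu^{\textbf{\textit{k}}_p}_p(x) \;=\; 2^{\gamma_p + \frac{p}{2}}\, c^{-2}_{\textbf{\textit{k}}_{\scriptscriptstyle p}}\, (-\zeta)^{\textbf{\textit{l}}(u)}\, h_u(y).
\]
I would prove this by expanding the Dunkl kernel as $E_{p,\textbf{\textit{k}}_{\scriptscriptstyle p}}(x, z) = \sum_{\beta} Q_\beta(x)\, z^\beta$ with real-valued coefficients $Q_\beta$ and then inserting $z = -\zeta y$. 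The substitution is legitimate because the coordinates $-\zeta y_j$ all lie in the commutative real subalgebra generated by $\zeta$ (isomorphic to $\mathbb{C}$), so $z^\beta = (-\zeta)^{|\beta|} y^\beta$ with the Clifford prefactor $(-\zeta)^{|\beta|}$ commuting past every real integrand. Matching this termwise with the classical case $\zeta = i$ (which is precisely (\ref{tttt})), and using the parity relations $Q_\beta(-x) = (-1)^{|\beta|} Q_\beta(x)$ and $h_u(-x) = (-1)^{\textbf{\textit{l}}(u)} h_u(x)$, I force every contribution with $|\beta| \not\equiv \textbf{\textit{l}}(u)\pmod{2}$ to vanish; on the surviving terms, a direct verification gives $(-\zeta)^{|\beta|}(-i)^{\textbf{\textit{l}}(u)-|\beta|} = (-\zeta)^{\textbf{\textit{l}}(u)}$, which collapses the sum to the claimed right-hand side.

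Applying this extended identity with $\zeta = a$ to $I_1$ and with $\zeta = b$ to $I_2$, and then recombining the product (noting that the real scalars $h_u(\textbf{\textit{y}}_1)$ and $h_v(\textbf{\textit{y}}_2)$ may be moved past all Clifford constants), I recover the eigenvalue relation (\ref{EB}). I expect the main obstacle to lie in the second step: the non-commutativity of $Cl_{p,q}$ forbids a naive scalar treatment of $-\zeta$, and the rescue is the conjunction of three features---that $-\zeta$ generates a commutative complex-like subalgebra of $Cl_{p,q}$, that the Taylor coefficients $Q_\beta$ are real, and that the generalized Hermite functions $h_u$ have a definite parity---which together conspire to collapse the mixed powers of $\zeta$ and $i$ into the single Clifford factor $(-\zeta)^{\textbf{\textit{l}}(u)}$.
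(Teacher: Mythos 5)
Your proposal is correct, and its skeleton is the same as the paper's: substitute $h_u h_v$ into the definition, use that the $h$'s are real scalars to separate the integrand, apply Fubini, and invoke the Dunkl eigenfunction relation (\ref{tttt}) on each factor. The genuine difference is your ``central step'': the paper simply applies (\ref{tttt}) with the Clifford roots $a$ and $b$ in place of the complex unit $i$, without comment, whereas you prove this substitution is legitimate --- via the power-series expansion of the Dunkl kernel with real coefficients $Q_\beta$, the fact that $-\zeta y$ lives in the commutative subalgebra $\mathbb{R}[\zeta]\cong\mathbb{C}$, and the parity relations $Q_\beta(-x)=(-1)^{|\beta|}Q_\beta(x)$, $h_u(-x)=(-1)^{\textbf{\textit{l}}(u)}h_u(x)$ (together with the evenness of the weight) which kill all terms with $|\beta|\not\equiv \textbf{\textit{l}}(u)\pmod 2$ and collapse the surviving prefactors to $(-\zeta)^{\textbf{\textit{l}}(u)}$. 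That extra lemma is exactly the point the paper leaves implicit, so your version is the more complete argument; a slightly shorter route to the same conclusion is to observe that the classical identity is an equality of real-coefficient power series in $i$ and transport it through the real-algebra isomorphism $\mathbb{C}\to\mathbb{R}[\zeta]$, $i\mapsto\zeta$. Two minor remarks: your labelling $f=h_u(\textbf{\textit{x}}_1)h_v(\textbf{\textit{x}}_2)$ swaps $u$ and $v$ relative to the statement, which pairs $v$ with the $p$-variable and $u$ with the $q$-variable (the paper itself is inconsistent on this between Lemma \ref{CGFL} and Lemma \ref{EBL}, so this is cosmetic); and for the Fubini step note that the bound (\ref{yy}) is stated for the complex unit, so for a general $\zeta$ with $\zeta^2=-1$ one only gets $\Vert E(\textbf{\textit{x}},-\zeta \textbf{\textit{y}})\Vert_c\leq 1+\Vert\zeta\Vert_c$ via the same subalgebra argument --- still ample against the Gaussian decay of the $h$'s, but worth stating since $\Vert\zeta\Vert_c$ need not equal $1$ in $Cl_{p,q}$.
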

\begin{proof}
Applying Fubini's theorem and Formula (\ref{tttt}), we obtain
\begin{eqnarray*}
\mathcal{ F}^{-a,-b} \lbrace h_{u} h_{v}  \rbrace (\textbf{\textit{y}}_{1},  \textbf{\textit{y}}_{2}) &=& \int_{\mathbb{R}^{p,q}} E_{p, \textbf{\textit{k}}_{\scriptscriptstyle p}}(\textbf{\textit{x}}_{1},-a \textbf{\textit{y}}_{1}) h_{u}(\textbf{\textit{x}}_{2}) h_{v}(\textbf{\textit{x}}_{1})  E_{q, \textbf{\textit{k}}_{\scriptscriptstyle q}}(\textbf{\textit{x}}_{2},-b \textbf{\textit{y}}_{2})d\mu_{p, q}(\textbf{\textit{x}})\\
 &=&  c^{-1}_{\textbf{\textit{k}}_{\scriptscriptstyle p}} \, c_{\textbf{\textit{k}}_{\scriptscriptstyle p}} \int_{\mathbb{R}^{p}} E_{p, \textbf{\textit{k}}_{\scriptscriptstyle p}}(\textbf{\textit{x}}_{1},-a \textbf{\textit{y}}_{1})h_{v}(\textbf{\textit{x}}_{1}) d\mu^{\textbf{\textit{k}}_{p}}_{p}(\textbf{\textit{x}}_{1}) \\ 
 & \times &  c^{-1}_{\textbf{\textit{k}}_{\scriptscriptstyle q}} \, c_{\textbf{\textit{k}}_{\scriptscriptstyle q}} 
  \int_{\mathbb{R}^{q}} E_{q, \textbf{\textit{k}}_{\scriptscriptstyle q}}(\textbf{\textit{x}}_{2},-b \textbf{\textit{y}}_{2}) h_{u}(\textbf{\textit{x}}_{2})  d\mu^{\textbf{\textit{k}}_{q}}_{q}(\textbf{\textit{x}}_{2})\\
 &=& c^{-1}_{\textbf{\textit{k}}_{\scriptscriptstyle p}} \, 2^{ \gamma_{p} +\frac{p}{2}} \, c^{-1}_{\textbf{\textit{k}}_{\scriptscriptstyle p}}   (-a)^{\textbf{\textit{l}}( v ) }  h_{v}(\textbf{\textit{y}}_{1})  c^{-1}_{\textbf{\textit{k}}_{\scriptscriptstyle q}} \, 2^{ \gamma_{q} +\frac{q}{2}} \, c^{-1}_{\textbf{\textit{k}}_{\scriptscriptstyle q}}   (-b)^{\textbf{\textit{l}}( u )} h_{u}(\textbf{\textit{y}}_{2})\\
 &=& c^{-2}_{\textbf{\textit{k}}_{\scriptscriptstyle p}} \, c^{-2}_{\textbf{\textit{k}}_{\scriptscriptstyle q}} \, 2^{ \gamma_{p} +\frac{p}{2}}  \, 2^{ \gamma_{q} +\frac{q}{2}} (-a)^{\textbf{\textit{l}}( v ) }  (-b)^{\textbf{\textit{l}} (u ) } h_{v}(\textbf{\textit{y}}_{1})h_{u}(\textbf{\textit{y}}_{2}).
\end{eqnarray*}
\end{proof}
\begin{Th}[ $ \mathbf{ Plancherel ~~ formula ~~ for ~~ CDT} $]\label{CPT}
For every $f \in L^{2}_{\textbf{\textit{k}}_{d}}(\mathbb{R}^{p,q};Cl_{p,q})$, we have
\begin{align}
\vert \mathcal{ F}^{-a,-b} \lbrace f \rbrace \vert^{2}_{\textbf{\textit{k}}_{\scriptscriptstyle d},2} = ( c^{-2}_{\textbf{\textit{k}}_{\scriptscriptstyle p}} \, c^{-2}_{\textbf{\textit{k}}_{\scriptscriptstyle q}} \, 2^{ \gamma_{p} +\frac{p}{2}} \,  2^{ \gamma_{q} +\frac{q}{2}} )^{2} \vert f \vert^{2}_{\textbf{\textit{k}}_{\scriptscriptstyle d},2}.  
\end{align}
\end{Th}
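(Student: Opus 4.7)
The plan is to exploit Lemmas~\ref{CGFL} and \ref{EBL}, which together diagonalize the CDT on a spanning set of $L^{2}_{\textbf{\textit{k}}_{\scriptscriptstyle d}}(\mathbb{R}^{p,q};Cl_{p,q})$. I would first use Lemma~\ref{CGFL} to expand an arbitrary $f$ in the tensor-product basis with Clifford coefficients,
\[
f(\textbf{\textit{x}}_{1},\textbf{\textit{x}}_{2})=\sum_{v,u} h_{v}(\textbf{\textit{x}}_{1})\,h_{u}(\textbf{\textit{x}}_{2})\,M^{v,u},\qquad M^{v,u}=\sum_{A} M^{v,u}_{A}\,e_{A}\in Cl_{p,q},
\]
and reduce to finite partial sums by density together with $L^{2}$-continuity. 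A component-by-component application of the classical Plancherel identity (\ref{PF}) on each of the two factors, combined with $\Vert M\Vert_{c}^{2}=\sum_{A}M_{A}^{2}$, then yields the Parseval-type identity
\[
\vert f\vert_{\textbf{\textit{k}}_{\scriptscriptstyle d},2}^{2}=\sum_{v,u}\Vert M^{v,u}\Vert_{c}^{2}\,\Vert h_{v}\Vert_{\textbf{\textit{k}}_{\scriptscriptstyle p},2}^{2}\,\Vert h_{u}\Vert_{\textbf{\textit{k}}_{\scriptscriptstyle q},2}^{2}.
\]

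Next I would apply the CDT termwise to the same expansion. Since $h_{v}(\textbf{\textit{x}}_{1})$ and $h_{u}(\textbf{\textit{x}}_{2})$ are real, they commute through the Clifford product, so the separation-of-variables computation carried out in the proof of Lemma~\ref{EBL} applies verbatim once $M^{v,u}$ is slipped between the two Dunkl kernels, producing
\[
\mathcal{F}^{-a,-b}\lbrace f\rbrace(\textbf{\textit{y}})=K\sum_{v,u} h_{v}(\textbf{\textit{y}}_{1})\,h_{u}(\textbf{\textit{y}}_{2})\,(-a)^{\textbf{\textit{l}}(v)}\,M^{v,u}\,(-b)^{\textbf{\textit{l}}(u)},
\]
with the same constant $K=c^{-2}_{\textbf{\textit{k}}_{\scriptscriptstyle p}}c^{-2}_{\textbf{\textit{k}}_{\scriptscriptstyle q}}\,2^{\gamma_{p}+p/2}\,2^{\gamma_{q}+q/2}$ appearing in Lemma~\ref{EBL}. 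Applying the same Parseval-type identity to this transformed series gives
\[
\vert\mathcal{F}^{-a,-b}\lbrace f\rbrace\vert_{\textbf{\textit{k}}_{\scriptscriptstyle d},2}^{2}=K^{2}\sum_{v,u}\Vert (-a)^{\textbf{\textit{l}}(v)} M^{v,u} (-b)^{\textbf{\textit{l}}(u)}\Vert_{c}^{2}\,\Vert h_{v}\Vert_{\textbf{\textit{k}}_{\scriptscriptstyle p},2}^{2}\,\Vert h_{u}\Vert_{\textbf{\textit{k}}_{\scriptscriptstyle q},2}^{2}.
\]

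The main obstacle, as I see it, is the purely algebraic claim obtained by matching the two sums term by term: that two-sided multiplication by $(-a)^{\textbf{\textit{l}}(v)}$ and $(-b)^{\textbf{\textit{l}}(u)}$ leaves the Clifford modulus invariant, i.e. $\Vert (-a)^{\textbf{\textit{l}}(v)} M (-b)^{\textbf{\textit{l}}(u)}\Vert_{c}=\Vert M\Vert_{c}$ for every $M\in Cl_{p,q}$. From $a^{2}=b^{2}=-1$ the powers cycle through $\lbrace 1,\pm a,-1\rbrace$ and $\lbrace 1,\pm b,-1\rbrace$, so the claim reduces to checking that left multiplication by $a$ and right multiplication by $b$ are isometries of $(Cl_{p,q},\Vert\cdot\Vert_{c})$. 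One route is to expand
\[
\Vert aMb\Vert_{c}^{2}=\langle aMb\,\widetilde{b}\,\widetilde{M}\,\widetilde{a}\rangle_{0}
\]
using the order-reversal $\widetilde{xy}=\widetilde{y}\,\widetilde{x}$, and then collapse the outer pairs via the normalization $a\widetilde{a}=b\widetilde{b}=1$ (which generalizes the unit pure quaternion situation of \cite{key-fah}) down to $\langle M\widetilde{M}\rangle_{0}=\Vert M\Vert_{c}^{2}$. Once this isometry step is granted, the two weighted sums above agree coefficient by coefficient and the Plancherel formula with constant $K^{2}$ drops out.
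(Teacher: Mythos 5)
Your proposal follows essentially the same route as the paper: expand $f$ via Lemma~\ref{CGFL}, apply the eigenvalue relation of Lemma~\ref{EBL} termwise, and compare the two Parseval-type sums $\sum_{v,u}\Vert M^{v,u}\Vert_{c}^{2}$. The isometry step you isolate as the "main obstacle" — collapsing $(-a)^{\textbf{\textit{l}}(v)}M(-b)^{\textbf{\textit{l}}(u)}\widetilde{(-b)^{\textbf{\textit{l}}(u)}}\widetilde{M}\widetilde{(-a)^{\textbf{\textit{l}}(v)}}$ to $\Vert M\Vert_{c}^{2}$ — is exactly the step the paper performs tacitly, so your explicit appeal to $a\widetilde{a}=b\widetilde{b}=1$ only makes visible an assumption the paper's own computation already relies on.
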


\begin{proof}
First, we calculate the action of the CDT transform on a multivector-valued function $f \in L^{2}_{\textbf{\textit{k}}_{\scriptscriptstyle d}}(\mathbb{R}^{p,q};Cl_{p,q})$.\\
By Lemmas \ref{CGFL} and \ref{EBL}, we have:
\begin{eqnarray*}
\mathcal{ F}^{-a,-b} \lbrace f  \rbrace (\textbf{\textit{y}}) &=& \sum_{v \in \mathbb{Z}_{+}^{p}} \sum_{u \in \mathbb{Z}_{+}^{q}} \mathcal{ F}^{-a,-b} \lbrace h_{u}(\textbf{\textit{x}}_{2}) h_{v}(\textbf{\textit{x}}_{1}) M_{v, u}  \rbrace (\textbf{\textit{y}}) \\
&=& \sum_{v \in \mathbb{Z}_{+}^{p}} \sum_{u \in \mathbb{Z}_{+}^{q}} c^{-2}_{\textbf{\textit{k}}_{\scriptscriptstyle p}} \, c^{-2}_{\textbf{\textit{k}}_{\scriptscriptstyle q}} \, 2^{ \gamma_{p} +\frac{p}{2}} \,  2^{ \gamma_{q} +\frac{q}{2}}  (-a)^{\textbf{\textit{l}} (v) } M_{v, u} (-b)^{\textbf{\textit{l}} (u ) } h_{v}(\textbf{\textit{y}}_{1})h_{u}(\textbf{\textit{y}}_{2}),
\end{eqnarray*}
where the Clifford numbers $M_{v, u}$ do not necessarily commute with $a$ and $b$.\\
Thus, we calculate:
\begin{align*}
&\, \,  \langle \mathcal{F}^{-a,-b} \lbrace f \rbrace (\textbf{\textit{y}}) ; \mathcal{F}^{-a,-b} \lbrace f \rbrace (\textbf{\textit{y}}) \rangle \\
&= \int_{\mathbb{R}^{p,q}} \mathcal{F}^{-a,-b} \lbrace f \rbrace (\textbf{\textit{y}}) * \widetilde{\mathcal{F}^{-a,-b} \lbrace f \rbrace (\textbf{\textit{y}})} \, d\mu_{p,q}(\textbf{\textit{y}}) \\
&= \left( c^{-2}_{\textbf{\textit{k}}_{\scriptscriptstyle p}} \, c^{-2}_{\textbf{\textit{k}}_{\scriptscriptstyle q}} \, 2^{\gamma_{p} + \frac{p}{2}} \, 2^{\gamma_{q} + \frac{q}{2}} \right)^{2} \sum_{v \in \mathbb{Z}_{+}^{p}} \sum_{u \in \mathbb{Z}_{+}^{q}} (-a)^{\textbf{\textit{l}}(v)} M_{v,u} (-b)^{\textbf{\textit{l}}(u)} \widetilde{(-a)^{\textbf{\textit{l}}(v)} M_{v,u} (-b)^{\textbf{\textit{l}}(u)}} \\
&\quad \times \int_{\mathbb{R}^{p,q}} h_{v}(\textbf{\textit{y}}_{1}) h_{u}(\textbf{\textit{y}}_{2}) * \widetilde{h_{v}(\textbf{\textit{y}}_{1}) h_{u}(\textbf{\textit{y}}_{2})} \, d\mu^{\textbf{\textit{k}}_{p}}_{p}(\textbf{\textit{y}}_{1}) \, d\mu^{\textbf{\textit{k}}_{q}}_{q}(\textbf{\textit{y}}_{2}) \\
&= \left( c^{-2}_{\textbf{\textit{k}}_{\scriptscriptstyle p}} \, c^{-2}_{\textbf{\textit{k}}_{\scriptscriptstyle q}} \, 2^{\gamma_{p} + \frac{p}{2}} \, 2^{\gamma_{q} + \frac{q}{2}} \right)^{2} \sum_{v \in \mathbb{Z}_{+}^{p}} \sum_{u \in \mathbb{Z}_{+}^{q}} (-a)^{\textbf{\textit{l}}(v)} M_{v,u} (-b)^{\textbf{\textit{l}}(u)} \widetilde{(-b)^{\textbf{\textit{l}}(u)}} \widetilde{M_{v,u}} \widetilde{(-a)^{\textbf{\textit{l}}(v)}} \\
&\quad \times \int_{\mathbb{R}^{p}} h_{v}(\textbf{\textit{y}}_{1}) \widetilde{h_{v}(\textbf{\textit{y}}_{1})} \, d\mu^{\textbf{\textit{k}}_{p}}_{p}(\textbf{\textit{y}}_{1}) \times \int_{\mathbb{R}^{q}} h_{u}(\textbf{\textit{y}}_{2}) \widetilde{h_{u}(\textbf{\textit{y}}_{2})} \, d\mu^{\textbf{\textit{k}}_{q}}_{q}(\textbf{\textit{y}}_{2}) \\
&= \left( c^{-2}_{\textbf{\textit{k}}_{\scriptscriptstyle p}} \, c^{-2}_{\textbf{\textit{k}}_{\scriptscriptstyle q}} \, 2^{\gamma_{p} + \frac{p}{2}} \, 2^{\gamma_{q} + \frac{q}{2}} \right)^{2} \sum_{v \in \mathbb{Z}_{+}^{p}} \sum_{u \in \mathbb{Z}_{+}^{q}} \Vert M_{v,u} \Vert_{c}^{2}.
\end{align*}
Similarly, we calculate:

\begin{eqnarray*}
\langle f; f \rangle & = & \sum_{v \in \mathbb{Z}_{+}^{p}} \sum_{u \in \mathbb{Z}_{+}^{q}} M_{v, u} \widetilde{M_{v, u} }  \int_{\mathbb{R}^{p}} h_{v}(\textbf{\textit{x}}_{1})\widetilde{h_{v}(\textbf{\textit{x}}_{1})}d\mu^{\textbf{\textit{k}}_{p}}_{p}(\textbf{\textit{x}}_{1})\\
& \times & \int_{\mathbb{R}^{q}} h_{u}(\textbf{\textit{x}}_{2})\widetilde{h_{u}(\textbf{\textit{x}}_{2})}d\mu^{\textbf{\textit{k}}_{q}}_{q}(\textbf{\textit{x}}_{2})\\
&=&  \sum_{v \in \mathbb{Z}_{+}^{p}} \sum_{u \in \mathbb{Z}_{+}^{q}} \Vert M_{v, u} \Vert_{c}^{2}.
\end{eqnarray*}
Thus, the proof is complete.
\end{proof}
\begin{remark}
 In signal processing, the previous theorem states that the signal energy is preserved by the CDT.
 \end{remark}
\subsubsection*{Translation and convolution for the CDT}
One of the most fundamental concepts in Fourier theory  is the convolution, defined as
\begin{align*}
(f*g)(x) =  \int_{\mathbb{R}^{d}} f(y)g(x-y)dy,
\end{align*}
 which plays a crucial role in signal processing, such as edge detection, sharpening, and smoothing in image processing.\\
We introduce the Clifford convolution of the CDT, which extends the classical convolution to Clifford algebra. Let us first define the convolution of two multivector valued functions. To achieve this, we introduce the notion of generalized translation.  
 \begin{definition}
Let $ f \in  L^{2}_{\textbf{\textit{k}}_{\scriptscriptstyle d}}(\mathbb{R}^{p,q};Cl_{p,q}) $ and $ \textbf{\textit{z}} \in \mathbb{R}^{p,q} $ be given. The generalized translation operator $ \tau_{\textbf{\textit{z}}} f $  is defined by:
\begin{align}\label{DC}
\mathcal{ F}^{-a,-b} \lbrace  \tau_{\textbf{\textit{z}}} f  \rbrace (\textbf{\textit{y}}) = E_{p, \textbf{\textit{k}}_{\scriptscriptstyle p}}(\textbf{\textit{z}}_{1},-a \textbf{\textit{y}}_{1})  \mathcal{ F}^{-a,-b} \lbrace f  \rbrace (\textbf{\textit{y}}) E_{q, \textbf{\textit{k}}_{\scriptscriptstyle q}}(\textbf{\textit{z}}_{2},-b \textbf{\textit{y}}_{2}).
\end{align}
\end{definition}
\begin{remark}
If $\textbf{\textit{k}}_{p}=\textbf{\textit{k}}_{q}=0$ we get a translation operator for the two-sided Clifford Fourier transform, and if in addition $p=0$ and $q=2$ the previous definition coincides with that of the translation operator associated with the quaternionic Fourier transform(see \cite{DBY1}).
\end{remark}
\begin{proposition}
The generalized translation $\tau_\textbf{\textit{z}}:  L^{2}_{\textbf{\textit{k}}_{\scriptscriptstyle d}}(\mathbb{R}^{p,q};Cl_{p,q}) \rightarrow  L^{2}_{\textbf{\textit{k}}_{\scriptscriptstyle d}}(\mathbb{R}^{p,q};Cl_{p,q})$ is a bounded operator, and we have
\begin{align}
\vert \tau_\textbf{\textit{z}}f \vert^{2}_{\textbf{\textit{k}}_{\scriptscriptstyle d},2} \leq \vert f \vert^{2}_{\textbf{\textit{k}}_{\scriptscriptstyle d},2}, ~~ f \in  L^{2}_{\textbf{\textit{k}}_{\scriptscriptstyle d}}(\mathbb{R}^{p,q};Cl_{p,q}).
\end{align}
\end{proposition}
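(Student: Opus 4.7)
The plan is to transfer the inequality to the Fourier side via the Plancherel formula (Theorem~\ref{CPT}) and then reduce it to the pointwise Dunkl-kernel bound from (\ref{yy}) that was already employed in the well-definedness remark following the definition of the CDT. Writing the Plancherel constant as $K = \bigl(c^{-2}_{\textbf{\textit{k}}_{\scriptscriptstyle p}}\, c^{-2}_{\textbf{\textit{k}}_{\scriptscriptstyle q}}\, 2^{\gamma_{p}+\frac{p}{2}}\, 2^{\gamma_{q}+\frac{q}{2}}\bigr)^{2}$, Theorem~\ref{CPT} applied to both $\tau_{\textbf{\textit{z}}}f$ and $f$ yields
$$\vert \tau_{\textbf{\textit{z}}}f \vert^{2}_{\textbf{\textit{k}}_{\scriptscriptstyle d},2} = K^{-1}\,\vert \mathcal{F}^{-a,-b}\lbrace \tau_{\textbf{\textit{z}}}f \rbrace \vert^{2}_{\textbf{\textit{k}}_{\scriptscriptstyle d},2}, \qquad \vert f \vert^{2}_{\textbf{\textit{k}}_{\scriptscriptstyle d},2} = K^{-1}\,\vert \mathcal{F}^{-a,-b}\lbrace f \rbrace \vert^{2}_{\textbf{\textit{k}}_{\scriptscriptstyle d},2},$$
so the claim reduces to the Fourier-side bound $\vert \mathcal{F}^{-a,-b}\lbrace \tau_{\textbf{\textit{z}}}f \rbrace \vert^{2}_{\textbf{\textit{k}}_{\scriptscriptstyle d},2} \leq \vert \mathcal{F}^{-a,-b}\lbrace f \rbrace \vert^{2}_{\textbf{\textit{k}}_{\scriptscriptstyle d},2}$.

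Next, I would substitute the defining relation (\ref{DC}) of the generalized translation into the left-hand side. Combining the submultiplicativity of the Clifford modulus $\Vert \cdot \Vert_{c}$ (precisely the estimate used in the well-definedness remark right after the definition of the CDT) with the pointwise bound $\Vert E_{p, \textbf{\textit{k}}_{\scriptscriptstyle p}}(\textbf{\textit{z}}_{1}, -a\textbf{\textit{y}}_{1}) \Vert_{c} \leq 1$ and its $q$-counterpart gives the pointwise inequality
$$\Vert E_{p, \textbf{\textit{k}}_{\scriptscriptstyle p}}(\textbf{\textit{z}}_{1}, -a\textbf{\textit{y}}_{1})\, \mathcal{F}^{-a,-b}\lbrace f \rbrace(\textbf{\textit{y}})\, E_{q, \textbf{\textit{k}}_{\scriptscriptstyle q}}(\textbf{\textit{z}}_{2}, -b\textbf{\textit{y}}_{2}) \Vert_{c} \leq \Vert \mathcal{F}^{-a,-b}\lbrace f \rbrace(\textbf{\textit{y}}) \Vert_{c}.$$
Squaring and integrating against $d\mu_{p,q}(\textbf{\textit{y}})$ yields exactly the required Fourier-side inequality, which combined with the two Plancherel identities above produces the advertised estimate.

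The main obstacle is the justification of the pointwise kernel bound $\Vert E_{p, \textbf{\textit{k}}_{\scriptscriptstyle p}}(\textbf{\textit{z}}_{1}, -a\textbf{\textit{y}}_{1}) \Vert_{c} \leq 1$: estimate (\ref{yy}) is originally stated for the complex imaginary unit $i$, whereas here $a \in Cl_{p,q}$ is an arbitrary square root of $-1$. The passage from $i$ to $a$ rests on the series expansion of the Dunkl kernel together with the identity $a^{2}=-1$ (and similarly $b^{2}=-1$); this is the same implicit extension already exploited in the well-definedness remark. Once this kernel estimate is granted, the remainder is a direct assembly of Plancherel and the elementary Clifford-norm inequality.
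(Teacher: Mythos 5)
Your argument is exactly the paper's proof, which simply states that the result follows from a direct application of the Plancherel theorem (Theorem~\ref{CPT}) together with the kernel bound (\ref{yy}); you have merely spelled out the intermediate steps, including the reduction to the Fourier side via the defining relation (\ref{DC}) and the pointwise estimate on the Dunkl kernels. Your closing caveat about extending (\ref{yy}) from the complex unit $i$ to a general square root $a$ of $-1$ in $Cl_{p,q}$ is a fair observation, and it is the same implicit extension the paper itself relies on in the well-definedness remark.
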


 \begin{proof}
A direct application of Plancherel Theorem \ref{CPT} and formula (\ref{yy}) gives the result.
\end{proof}
 \begin{remark}
According to the inverse CDT, we obtain the following explicit formula for the generalized translation operator $ \tau$:
\begin{eqnarray}\label{EXPLICITD}
\tau_{\textbf{\textit{z}}} f(\textbf{\textit{x}}) \nonumber
& = & c^{2}_{\textbf{\textit{k}}_{p}} c^{2}_{\textbf{\textit{k}}_{q}} \mathcal{ F}^{a,b} \lbrace E_{p, \textbf{\textit{k}}_{p}}(\textbf{\textit{z}}_{1},-a \textbf{\textit{y}}_{1})  \mathcal{ F}^{-a,-b} \lbrace f \rbrace (\textbf{\textit{y}}) E_{q, \textbf{\textit{k}}_{q}}(\textbf{\textit{z}}_{2},-b \textbf{\textit{y}}_{2}) \rbrace(\textbf{\textit{x}}) \\ \nonumber
& = & c^{2}_{\textbf{\textit{k}}_{p}} c^{2}_{\textbf{\textit{k}}_{q}} \int_{\mathbb{R}^{p,q}}   E_{p, \textbf{\textit{k}}_{p}}(\textbf{\textit{z}}_{1},-a \textbf{\textit{y}}_{1}) E_{p, \textbf{\textit{k}}_{p}}(\textbf{\textit{x}}_{1},a \textbf{\textit{y}}_{1}) \mathcal{ F}^{-a,-b} \lbrace f  \rbrace (\textbf{\textit{y}}) \\ 
& & \times E_{q, \textbf{\textit{k}}_{q}}(\textbf{\textit{x}}_{2},b \textbf{\textit{y}}_{2}) E_{q, \textbf{\textit{k}}_{q}}(\textbf{\textit{z}}_{2},-b \textbf{\textit{y}}_{2}) d\mu_{p,q}(\textbf{\textit{y}}).
\end{eqnarray}
\end{remark}

Now, let us define the function space necessary for the next proposition. Let $\mathcal{B}^{p,q}_{\textbf{\textit{k}}_{\scriptscriptstyle d}}$ denote the subspace of functions $f$ in $L^{1}_{\textbf{\textit{k}}_{\scriptscriptstyle d}}(\mathbb{R}^{p,q};Cl_{p,q})$, such that $ \mathcal{ F}^{-a,-b} \lbrace f \rbrace (\textbf{\textit{y}}) \in  L^{1}_{\textbf{\textit{k}}_{\scriptscriptstyle d}}(\mathbb{R}^{p,q};Cl_{p,q})$.

The following proposition gives the relation between the generalized translation operator $\tau$ and the Dunkl translation $\tilde{\tau}^{\textbf{\textit{k}}_{\scriptscriptstyle d}}$.

\begin{proposition}
Let $f$ be in $\mathcal{B}^{p,q}_{\textbf{\textit{k}}_{\scriptscriptstyle d}}$. Then, the generalized translation operator $\tau$ satisfies the following relation:
\begin{align}
\tau_{\textbf{\textit{z}}} f(\textbf{\textit{x}}_{1}, \textbf{\textit{x}}_{2}) = c^{2}_{\textbf{\textit{k}}_{q}} c^{2}_{\textbf{\textit{k}}_{p}} \tilde{\tau}^{\textbf{\textit{k}}_{p}}_{\textbf{\textit{z}}_{1}} \left( \tilde{\tau}^{\textbf{\textit{k}}_{q}}_{\textbf{\textit{z}}_{2}} f( \cdot , \textbf{\textit{x}}_{2}) \right)(\textbf{\textit{x}}_{1}).
\end{align}
\end{proposition}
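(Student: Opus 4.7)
The plan is to derive the identity by inserting the CDT inversion formula and recognizing two nested Dunkl translations in the resulting integral. Since $f \in \mathcal{B}^{p,q}_{\textbf{\textit{k}}_{\scriptscriptstyle d}}$, Theorem~\ref{IVF} gives
\[
f(\textbf{\textit{u}}_{1}, \textbf{\textit{u}}_{2}) = c^{2}_{\textbf{\textit{k}}_{p}} c^{2}_{\textbf{\textit{k}}_{q}} \int_{\mathbb{R}^{p,q}} E_{p, \textbf{\textit{k}}_{p}}(\textbf{\textit{u}}_{1}, a \textbf{\textit{y}}_{1}) \, \mathcal{F}^{-a,-b}\{f\}(\textbf{\textit{y}}) \, E_{q, \textbf{\textit{k}}_{q}}(\textbf{\textit{u}}_{2}, b \textbf{\textit{y}}_{2}) \, d\mu_{p,q}(\textbf{\textit{y}}),
\]
in which the dependence on $\textbf{\textit{u}}_{1}$ and $\textbf{\textit{u}}_{2}$ is isolated in two Dunkl kernels lying in the commutative subalgebras $\mathbb{R}[a]$ and $\mathbb{R}[b]$ of $Cl_{p,q}$.

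The key ingredient is the reproducing identity
\[
\tilde{\tau}^{\textbf{\textit{k}}_{q}}_{\textbf{\textit{z}}_{2}}\bigl[E_{q,\textbf{\textit{k}}_{q}}(\cdot, b\textbf{\textit{y}}_{2})\bigr](\textbf{\textit{x}}_{2}) = E_{q,\textbf{\textit{k}}_{q}}(\textbf{\textit{z}}_{2}, -b\textbf{\textit{y}}_{2}) \, E_{q,\textbf{\textit{k}}_{q}}(\textbf{\textit{x}}_{2}, b\textbf{\textit{y}}_{2}),
\]
together with its $p$-analogue with $a$ in place of $b$. These transfer the classical Dunkl identity $\tilde{\tau}_{z} E(\cdot, i\xi)(x) = E(z, -i\xi) E(x, i\xi)$ to the Clifford setting. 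I would establish them either by expanding $E_{q,\textbf{\textit{k}}_{q}}(\cdot, b\textbf{\textit{y}}_{2})$ as a power series in $b$ with real Dunkl-polynomial coefficients and applying the real-valued operator $\tilde{\tau}^{\textbf{\textit{k}}_{q}}_{\textbf{\textit{z}}_{2}}$ term by term, or by noting that $E_{q,\textbf{\textit{k}}_{q}}(\cdot, b\textbf{\textit{y}}_{2})$ is a joint eigenfunction of the Dunkl operators $T^{q}_{j}$ with eigenvalues $(b\textbf{\textit{y}}_{2})_{j}$: since $\tilde{\tau}^{\textbf{\textit{k}}_{q}}_{\textbf{\textit{z}}_{2}}$ commutes with each $T^{q}_{j}$, the translated kernel must be a scalar multiple in $\mathbb{R}[b]$ of $E_{q,\textbf{\textit{k}}_{q}}(\cdot, b\textbf{\textit{y}}_{2})$, and evaluating at the origin pins the multiplier down as $E_{q,\textbf{\textit{k}}_{q}}(\textbf{\textit{z}}_{2}, -b\textbf{\textit{y}}_{2})$.

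With these identities in hand the remainder is bookkeeping. I would apply $\tilde{\tau}^{\textbf{\textit{k}}_{q}}_{\textbf{\textit{z}}_{2}}$ under the integral in the inversion formula (the interchange being justified by linearity, Fubini and the integrability hypothesis $f \in \mathcal{B}^{p,q}_{\textbf{\textit{k}}_{\scriptscriptstyle d}}$), which replaces $E_{q,\textbf{\textit{k}}_{q}}(\textbf{\textit{u}}_{2}, b\textbf{\textit{y}}_{2})$ by $E_{q,\textbf{\textit{k}}_{q}}(\textbf{\textit{z}}_{2}, -b\textbf{\textit{y}}_{2}) E_{q,\textbf{\textit{k}}_{q}}(\textbf{\textit{x}}_{2}, b\textbf{\textit{y}}_{2})$. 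A second application of $\tilde{\tau}^{\textbf{\textit{k}}_{p}}_{\textbf{\textit{z}}_{1}}$ to the resulting function of $\textbf{\textit{u}}_{1}$ replaces $E_{p,\textbf{\textit{k}}_{p}}(\textbf{\textit{u}}_{1}, a\textbf{\textit{y}}_{1})$ by $E_{p,\textbf{\textit{k}}_{p}}(\textbf{\textit{z}}_{1}, -a\textbf{\textit{y}}_{1}) E_{p,\textbf{\textit{k}}_{p}}(\textbf{\textit{x}}_{1}, a\textbf{\textit{y}}_{1})$. Using the commutativity of the $E$-factors within $\mathbb{R}[a]$ and $\mathbb{R}[b]$, the resulting integral coincides with the explicit representation (\ref{EXPLICITD}) of $\tau_{\textbf{\textit{z}}} f$, yielding the claim. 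The main obstacle is precisely the reproducing identity for $\tilde{\tau}$ applied to a Dunkl kernel with Clifford-valued second argument $b\textbf{\textit{y}}_{2}$ (or $a\textbf{\textit{y}}_{1}$); once this is verified by either the power-series or the eigenfunction argument above, the rest is a transparent comparison of two integrals.
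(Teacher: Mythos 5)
Your argument is correct and arrives at the stated identity, but it runs the computation in the opposite direction from the paper and leans on a stronger auxiliary lemma. The paper starts from the explicit representation (\ref{EXPLICITD}) of $\tau_{\textbf{\textit{z}}}f$, substitutes the defining integral of $\mathcal{F}^{-a,-b}\lbrace f\rbrace$, and applies Fubini; the resulting nested integrals of the form $c^{2}_{\textbf{\textit{k}}_{p}}\int\!\!\int E_{p,\textbf{\textit{k}}_{p}}(\textbf{\textit{x}}_{1},a\textbf{\textit{y}}_{1})E_{p,\textbf{\textit{k}}_{p}}(\textbf{\textit{z}}_{1},-a\textbf{\textit{y}}_{1})E_{p,\textbf{\textit{k}}_{p}}(\textbf{\textit{u}}_{1},-a\textbf{\textit{y}}_{1})(\cdots)$ are then recognized as Dunkl translations through their inverse-transform representation, which is available precisely because $f\in\mathcal{B}^{p,q}_{\textbf{\textit{k}}_{\scriptscriptstyle d}}$ --- no pointwise product formula for the kernel is needed. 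You instead start from the inversion formula and push $\tilde{\tau}^{\textbf{\textit{k}}_{q}}_{\textbf{\textit{z}}_{2}}$ and $\tilde{\tau}^{\textbf{\textit{k}}_{p}}_{\textbf{\textit{z}}_{1}}$ onto the kernels via $\tilde{\tau}_{\textbf{\textit{z}}}\bigl[E(\cdot,a\xi)\bigr](\textbf{\textit{x}})=E(\textbf{\textit{z}},-a\xi)E(\textbf{\textit{x}},a\xi)$; your signs are the right ones for this paper's normalization of $\tilde{\tau}$, and the identity is the standard R\"{o}sler/Thangavelu--Xu product formula transported to the commutative subalgebra $\mathbb{R}[a]\cong\mathbb{C}$, so your power-series justification goes through, whereas the eigenfunction variant is shakier because $E(\cdot,b\textbf{\textit{y}}_{2})\notin L^{2}$ and ``$\tilde{\tau}$ commutes with $T_{j}$'' presupposes an extension of $\tilde{\tau}$ beyond its $L^{2}$ definition. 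The one step you must tighten is the interchange of $\tilde{\tau}^{\textbf{\textit{k}}_{q}}_{\textbf{\textit{z}}_{2}}$ with the $\textbf{\textit{y}}$-integral: the Dunkl translation is not known to be an integral operator for general reflection groups, so ``Fubini'' does not apply to it directly; you should either read the inversion integral as an $L^{2}$-valued Bochner integral and use the boundedness of $\tilde{\tau}_{\textbf{\textit{z}}}$, or replace $\tilde{\tau}_{\textbf{\textit{z}}}$ by its inverse-transform representation --- at which point your computation collapses into the paper's. Finally, observe that both routes actually produce $\tau_{\textbf{\textit{z}}}f=\tilde{\tau}^{\textbf{\textit{k}}_{p}}_{\textbf{\textit{z}}_{1}}\bigl(\tilde{\tau}^{\textbf{\textit{k}}_{q}}_{\textbf{\textit{z}}_{2}}f(\cdot,\textbf{\textit{x}}_{2})\bigr)(\textbf{\textit{x}}_{1})$ with no leftover constant, since the factor $c^{2}_{\textbf{\textit{k}}_{p}}c^{2}_{\textbf{\textit{k}}_{q}}$ in (\ref{EXPLICITD}) is exactly consumed in assembling the two Dunkl translations; the prefactor displayed in the proposition deserves a second look on both sides.
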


\begin{proof}
By substituting the expression of $\mathcal{ F}^{-a,-b} \lbrace f \rbrace (\textbf{\textit{y}})$ into Equation \ref{EXPLICITD}, and subsequently applying Fubini's theorem to interchange the order of integration, we obtain the desired result.
\end{proof}

After defining the generalized translation operator analogously to the two-sided QDT, we derive an explicit formula for it. However, it is essential first to establish the definition of a radial function, which takes values in a Clifford algebra. Following this, we will investigate the properties of the generalized translation operator $\tau_{\textbf{\textit{z}}}$ on radial functions, as they are significant in various applications due to their inherent symmetries. We provide the definition of a radial function and a corresponding proposition that describes the action of the generalized translation operator on such functions. These results will deepen our understanding of $\tau_{\textbf{\textit{z}}}$ and its interactions with radial functions.

\begin{definition}
A function $f : \mathbb{R}^{p,q} \longrightarrow Cl_{p,q}$ is called radial if each component $f_{A}$ is radial.
\end{definition}
\begin{proposition}
Let $f \in B^{p,q}_{\textbf{\textit{k}}_{\scriptscriptstyle d}}$ be a radial function, which can be decomposed with respect to the variables $\textbf{\textit{x}}_1$ and $\textbf{\textit{x}}_2$ as:
$$f( \textbf{\textit{x}}) = f(\textbf{\textit{x}}_{1}, \textbf{\textit{x}}_{2} ) = F(\Vert \textbf{\textit{x}} \Vert_{c}) = F_{1}(\Vert \textbf{\textit{x}}_{1} \Vert_{c}) F_{2}(\Vert \textbf{\textit{x}}_{2} \Vert_{c}),$$
where 
$$F_{1}(\Vert \textbf{\textit{x}}_{1} \Vert_{c}) = \sum_{A} e_{A} F^{A}_{1}(\Vert \textbf{\textit{x}}_{1} \Vert_{c}), \quad F_{2}(\Vert \textbf{\textit{x}}_{2} \Vert_{c}) = \sum_{B} e_{B} F^{B}_{2}(\Vert \textbf{\textit{x}}_{2} \Vert_{c}).$$
Then, the generalized translation of $f$ is given by:
\begin{eqnarray*}
\tau_{\textbf{\textit{z}}} f(\textbf{\textit{x}}) & = & \sum_{A}  V_{\textbf{\textit{k}}_{\scriptscriptstyle p}} \left(  F^{A}_{1}(\sqrt{\Vert \textbf{\textit{z}}_{1} \Vert_{c}^{2} + 2 \langle \textbf{\textit{z}}_{1}, . \rangle + \Vert . \Vert_{c}^{2}})\right) (\textbf{\textit{x}}_{1}) e_{A}\\
& \times & \sum_{B} e_{B} V_{\textbf{\textit{k}}_{\scriptscriptstyle q}} \left(  F^{B}_{2}(\sqrt{\Vert \textbf{\textit{z}}_{2} \Vert_{c}^{2} + 2 \langle \textbf{\textit{z}}_{2}, . \rangle + \Vert . \Vert_{c}^{2}})\right) (\textbf{\textit{x}}_{2}).
\end{eqnarray*}
\end{proposition}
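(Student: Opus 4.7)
The plan is to reduce to the scalar radial case (Case 2, R\"osler's formula \eqref{EXPLICITD2}) by working componentwise in the Clifford basis. Starting from the preceding proposition, which expresses
$$\tau_{\textbf{\textit{z}}} f(\textbf{\textit{x}}_{1}, \textbf{\textit{x}}_{2}) = c^{2}_{\textbf{\textit{k}}_{p}} c^{2}_{\textbf{\textit{k}}_{q}}\, \tilde{\tau}^{\textbf{\textit{k}}_{p}}_{\textbf{\textit{z}}_{1}}\bigl(\tilde{\tau}^{\textbf{\textit{k}}_{q}}_{\textbf{\textit{z}}_{2}} f(\cdot, \textbf{\textit{x}}_{2})\bigr)(\textbf{\textit{x}}_{1}),$$
I will use the hypotheses $f(\textbf{\textit{x}})=F_{1}(\|\textbf{\textit{x}}_{1}\|_{c})F_{2}(\|\textbf{\textit{x}}_{2}\|_{c})$ and the basis decompositions $F_{1}=\sum_{A} e_{A} F_{1}^{A}$, $F_{2}=\sum_{B} e_{B} F_{2}^{B}$ to separate the two iterated Dunkl translations and apply the scalar radial formula to each real component $F_{1}^{A}$, $F_{2}^{B}$ individually.

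The key step is as follows. Since $\tilde{\tau}^{\textbf{\textit{k}}_{q}}_{\textbf{\textit{z}}_{2}}$ acts only on the $\textbf{\textit{x}}_{2}$ variable and is $\mathbb{R}$-linear on Clifford-valued functions, the factor $\sum_{A} e_{A} F_{1}^{A}(\|\textbf{\textit{x}}_{1}\|_{c})$ passes through on the left, and each basis element $e_{B}$ passes through on the right, so that
$$\tilde{\tau}^{\textbf{\textit{k}}_{q}}_{\textbf{\textit{z}}_{2}}\bigl(f(\textbf{\textit{x}}_{1},\cdot)\bigr)(\textbf{\textit{x}}_{2}) = \sum_{A} e_{A} F_{1}^{A}(\|\textbf{\textit{x}}_{1}\|_{c}) \cdot \sum_{B} e_{B}\, \tilde{\tau}^{\textbf{\textit{k}}_{q}}_{\textbf{\textit{z}}_{2}}\bigl(F_{2}^{B}(\|\cdot\|_{c})\bigr)(\textbf{\textit{x}}_{2}).$$
Each $F_{2}^{B}(\|\cdot\|_{c})$ being a scalar-valued radial function on $\mathbb{R}^{q}$, formula \eqref{EXPLICITD2} rewrites the inner translation as $V_{\textbf{\textit{k}}_{q}}\bigl(F_{2}^{B}(\sqrt{\|\textbf{\textit{z}}_{2}\|_{c}^{2}+2\langle \textbf{\textit{z}}_{2},\cdot\rangle+\|\cdot\|_{c}^{2}})\bigr)(\textbf{\textit{x}}_{2})$. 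A symmetric argument then applies $\tilde{\tau}^{\textbf{\textit{k}}_{p}}_{\textbf{\textit{z}}_{1}}$ componentwise to the remaining $\textbf{\textit{x}}_{1}$-function, now viewing the $\textbf{\textit{x}}_{2}$-block as a Clifford constant that passes through on the right, and produces the stated product of $V_{\textbf{\textit{k}}_{p}}$ and $V_{\textbf{\textit{k}}_{q}}$ pieces.

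The main obstacle is careful bookkeeping of the non-commutative Clifford ordering: I must keep $e_{A}$ to the left of the $\textbf{\textit{x}}_{1}$-factor and $e_{B}$ to the left of the $\textbf{\textit{x}}_{2}$-factor throughout, so that the formal rearrangements rest only on $\mathbb{R}$-linearity of $\tilde{\tau}^{\textbf{\textit{k}}_{p}}_{\textbf{\textit{z}}_{1}}$ and $\tilde{\tau}^{\textbf{\textit{k}}_{q}}_{\textbf{\textit{z}}_{2}}$ and never on an illegitimate commutation of two Clifford basis elements. A secondary technical verification is that each scalar component $F_{1}^{A}$, $F_{2}^{B}$ inherits the integrability hypotheses of Case 2 from $f\in \mathcal{B}^{p,q}_{\textbf{\textit{k}}_{d}}$ combined with the product factorization. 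Finally, the normalization constants $c^{2}_{\textbf{\textit{k}}_{p}} c^{2}_{\textbf{\textit{k}}_{q}}$ inherited from the iterated-translation proposition must be reconciled with the form of the stated identity, consistently with the normalization conventions used throughout this section.
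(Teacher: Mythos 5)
Your proposal is correct and follows essentially the same route as the paper: both arguments reduce to R\"osler's radial product formula (\ref{EXPLICITD2}) by splitting $F_{1}$ and $F_{2}$ into their real scalar components $F_{1}^{A}$, $F_{2}^{B}$ (which commute with all Clifford elements), separating the $\textbf{\textit{x}}_{1}$- and $\textbf{\textit{x}}_{2}$-integrations by Fubini, and identifying each block as a scalar Dunkl translation $\tilde{\tau}^{\textbf{\textit{k}}_{p}}_{\textbf{\textit{z}}_{1}}F_{1}^{A}$, $\tilde{\tau}^{\textbf{\textit{k}}_{q}}_{\textbf{\textit{z}}_{2}}F_{2}^{B}$ while the basis elements $e_{A}$, $e_{B}$ remain fixed between the two blocks. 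The normalization issue you flag is genuine but is an inconsistency internal to the paper rather than a gap in your argument: the iterated-translation proposition carries a prefactor $c^{2}_{\textbf{\textit{k}}_{p}}c^{2}_{\textbf{\textit{k}}_{q}}$ that the radial statement omits, and the paper's own proof works directly from (\ref{EXPLICITD}) and silently absorbs those constants into the normalizations of $\mathcal{F}_{\textbf{\textit{k}}_{p}}$, $\mathcal{F}_{\textbf{\textit{k}}_{q}}$ and their inverses when identifying the underbraced integrals, so starting from (\ref{EXPLICITD}) as the paper does avoids the discrepancy you would otherwise have to reconcile.
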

\begin{proof}
To prove this, we begin by substituting the expression for the CDT of $f$, $\mathcal{F}^{-a,-b} \lbrace f \rbrace (\textbf{\textit{y}})$, into Equation (\ref{EXPLICITD}). Then, by applying Formula (\ref{EXPLICITD2}) along with the Fubini theorem, we derive the following:
\begin{align*}
& \tau_{\textbf{\textit{z}}} f(\textbf{\textit{x}}) \\
& =   \sum_{A} \underbrace{c_{\textbf{\textit{k}}_{p}}\int_{\R^{p}} E_{p, \textbf{\textit{k}}_{p}}(\textbf{\textit{x}}_{1}, a \textbf{\textit{y}}_{1}) E_{p, \textbf{\textit{k}}_{p}}(\textbf{\textit{z}}_{1}, -a \textbf{\textit{y}}_{1}) \overbrace{c_{\textbf{\textit{k}}_{p}} \int_{\R^{p}} E_{p, \textbf{\textit{k}}_{p}}(\textbf{\textit{x}}_{1}, -a \textbf{\textit{y}}_{1}) F^{A}_{1}(\Vert \textbf{\textit{x}}_{1} \Vert_{c}) d\mu^{\textbf{\textit{k}}_{p}}_{p}( \textbf{\textit{x}}_{1})}^{\mathcal{ F}_{\textbf{\textit{k}}_{p}} (F^{A}_{1}(\Vert \textbf{\textit{x}}_{1} \Vert_{c} )) (\textbf{\textit{y}}_{1})} d\mu^{\textbf{\textit{k}}_{p}}_{p}( \textbf{\textit{y}}_{1})}_{\tilde{\tau}^{\textbf{\textit{k}}_{p}}_{\textbf{\textit{z}}_{1}} F^{A}_{1}(\Vert \textbf{\textit{x}}_{1} \Vert_{c})} e_{A} \\
&  \sum_{B} e_{B} \underbrace{c_{\textbf{\textit{k}}_{q}}\int_{\R^{q}} E_{q, \textbf{\textit{k}}_{q}}(\textbf{\textit{x}}_{2}, b \textbf{\textit{y}}_{2}) E_{q, \textbf{\textit{k}}_{q}}(\textbf{\textit{z}}_{2}, -b \textbf{\textit{y}}_{2}) \overbrace{c_{\textbf{\textit{k}}_{q}} \int_{\R^{q}} E_{q, \textbf{\textit{k}}_{q}}(\textbf{\textit{x}}_{2}, -b \textbf{\textit{y}}_{2}) F^{B}_{2}(\Vert \textbf{\textit{x}}_{2} \Vert_{c}) d\mu^{\textbf{\textit{k}}_{q}}_{q}( \textbf{\textit{x}}_{2})}^{\mathcal{F}_{\textbf{\textit{k}}_{q}} (F^{B}_{2}(\Vert \textbf{\textit{x}}_{2} \Vert_{c})) (\textbf{\textit{y}}_{2})} d\mu^{\textbf{\textit{k}}_{q}}_{q}( \textbf{\textit{y}}_{2})}_{\tilde{\tau}^{\textbf{\textit{k}}_{q}}_{\textbf{\textit{z}}_{2}} F^{B}_{2}(\Vert \textbf{\textit{x}}_{2} \Vert_{c})} \\
& =  \sum_{A}  V_{\textbf{\textit{k}}_{p}} \left( F^{A}_{1}(\sqrt{\Vert \textbf{\textit{z}}_{1} \Vert_{c}^{2} + 2 \langle \textbf{\textit{z}}_{1}, . \rangle + \Vert . \Vert_{c}^{2}})\right) (\textbf{\textit{x}}_{1}) e_{A} \\
& \times  \sum_{B} e_{B} V_{\textbf{\textit{k}}_{q}} \left( F^{B}_{2}(\sqrt{\Vert \textbf{\textit{z}}_{2} \Vert_{c}^{2} + 2 \langle \textbf{\textit{z}}_{2}, . \rangle + \Vert . \Vert_{c}^{2}})\right) (\textbf{\textit{x}}_{2}).
\end{align*}

\end{proof}
 \begin{remark}
In light of Remark \ref{hhjjkk}, we can derive an explicit formula for the translation operator associated with the specific hypercomplex Fourier transforms discussed therein. This result follows directly as a corollary of the previous proposition.
\end{remark}
\begin{Ex}
To illustrate the application of the translation operator on a Clifford-valued function, consider the following example:

Let
\begin{align}\label{lll}
f(\textbf{\textit{x}}) = (\alpha + a \beta) e^{- \delta \Vert \textbf{\textit{x}} \Vert_{c}^{2}} (\gamma + b \lambda),
\end{align}
with $\delta > 0$ and ${ \alpha, \, \beta, \, \gamma, \, \lambda \in \R }.$ Then, we have 
\begin{eqnarray*}
\tau_{\textbf{\textit{z}}} f(\textbf{\textit{x}}) &=& ( \alpha + a \beta)(\gamma + b \lambda) V_{\textbf{\textit{k}}_{p}} \left(   e^{- \delta(\Vert \textbf{\textit{z}}_{1} \Vert_{c}^{2} + 2 \langle \textbf{\textit{z}}_{1}, . \rangle + \Vert . \Vert_{c}^{2})} \right) (\textbf{\textit{x}}_{1})\\
& \times &  V_{\textbf{\textit{k}}_{q}} \left(   e^{- \delta(\Vert \textbf{\textit{z}}_{2} \Vert_{c}^{2} + 2 \langle \textbf{\textit{z}}_{2}, . \rangle + \Vert . \Vert_{c}^{2})} \right) (\textbf{\textit{x}}_{2}).
\end{eqnarray*}
\end{Ex}
To further demonstrate the versatility of the generalized translation operator, we focus on the specific case of two dimensions, which provides valuable insights into its behavior in lower-dimensional settings. In this context, we consider functions with quaternionic values defined on $\mathbb{R}^2$. Specifically, let $C( \mathbb{R}^{2}; \mathbb{H})$ denote the set of functions $f : \mathbb{R}^{2} \longrightarrow \mathbb{H}$ such that $f$ can be expressed in the form $f = \sum_{A} f_{A} e_{A}$, where each $f_{i} \in C(\mathbb{R}^{2})$ for $i = 0, \dots, 3$, and where $e_{A} \in \{ e_{0}, e_{1}, e_{2}, e_{1}e_{2} \}$. By defining $\Omega_{i} = \sqrt{ \textbf{\textit{z}}_{i}^{2} + \textbf{\textit{x}}_{i}^{2} - 2 \textbf{\textit{z}}_{i} \textbf{\textit{x}}_{i} t_{i}}$ for $i = 1, 2$, we derive an explicit expression for the generalized translation operator in this particular case.
\begin{proposition}
Let $\textbf{\textit{k}}$ be the unique multiplicity function. For $f \in C(\mathbb{R}^{2}; \mathbb{H})$, the translation $\tau_{\textbf{\textit{z}}}$ has the following explicit form:
\begin{eqnarray*}
\tau_{\textbf{\textit{z}}} f(\textbf{\textit{x}}) & = &  \frac{1}{2} \int_{-1}^{1} g(\Omega_{2}) \left( 1 + \frac{\textbf{\textit{z}}_{2} - \textbf{\textit{x}}_{2}}{\Omega_{2}} \right) \psi_{\textbf{\textit{k}}}(t_{2}) dt_{2} \\
 & + & \frac{1}{2} \int_{-1}^{1} g(-\Omega_{2}) \left( 1 - \frac{\textbf{\textit{z}}_{2} - \textbf{\textit{x}}_{2}}{\Omega_{2}} \right) \psi_{\textbf{\textit{k}}}(t_{2}) dt_{2},
\end{eqnarray*}
where
\begin{eqnarray*}
g(.) & = & \frac{1}{2} \int_{-1}^{1} f(\Omega_{1}, .) \left( 1 + \frac{\textbf{\textit{z}}_{1} - \textbf{\textit{x}}_{1}}{\Omega_{1}} \right) \psi_{\textbf{\textit{k}}}(t_{1}) dt_{1} \\
& + & \frac{1}{2} \int_{-1}^{1} f(-\Omega_{1}, .) \left( 1 - \frac{\textbf{\textit{z}}_{1} - \textbf{\textit{x}}_{1}}{\Omega_{1}} \right) \psi_{\textbf{\textit{k}}}(t_{1}) dt_{1}
\end{eqnarray*}
and
 \begin{align*}
 \psi_{\textbf{\textit{k}}}(t) = \frac{\Gamma (\textbf{\textit{k}} + \frac{1}{2})}{\sqrt{\pi} \Gamma (\textbf{\textit{k}})} (1+t)(1-t^{2})^{\textbf{\textit{k}}-1}.
 \end{align*}
\end{proposition}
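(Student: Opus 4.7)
The plan is to specialize the preceding proposition, which identifies $\tau_{\textbf{\textit{z}}} f$ (up to a scalar normalization) with the iterated composition $\tilde{\tau}^{\textbf{\textit{k}}}_{\textbf{\textit{z}}_{1}}\circ \tilde{\tau}^{\textbf{\textit{k}}}_{\textbf{\textit{z}}_{2}}$ of two one-dimensional Dunkl translations, and then to substitute R\"{o}sler's explicit formula (\ref{EXPLICITD1}) twice. Since $p=q=1$, the reflection group in each coordinate reduces to $\mathbb{Z}_{2}$ with the common scalar multiplicity parameter $\textbf{\textit{k}}$ appearing in the statement, so Case~1 of Section~2 applies directly to each factor and the $\mathbb{Z}_{2}\times\mathbb{Z}_{2}$ Dunkl structure on $\mathbb{R}^{2}$ decouples into a tensor product.

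First, I would decompose $f=\sum_{A} f_{A} e_{A}$ with $f_{A}\in C(\mathbb{R}^{2})$. Because the one-dimensional Dunkl translation acts on the real coefficients componentwise and leaves the constant basis elements $e_{A}$ outside the integral, this decomposition reduces the computation to the real-valued case while preserving the ordering demanded by the two-sided structure of the CDT. I would then apply (\ref{EXPLICITD1}) in the second coordinate with the substitution $x\leftrightarrow \textbf{\textit{z}}_{2}$, $y\leftrightarrow \textbf{\textit{x}}_{2}$, so that $\Omega\leftrightarrow \Omega_{2}$, acting on the slice $\textbf{\textit{x}}_{2}\mapsto f(\textbf{\textit{x}}_{1},\textbf{\textit{x}}_{2})$. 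This produces the outer pair of integrals in which the integrand is evaluated at the new sample point $\pm\Omega_{2}$ in the second slot.

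Next, I would apply (\ref{EXPLICITD1}) a second time, now in the first coordinate with $x\leftrightarrow \textbf{\textit{z}}_{1}$, $y\leftrightarrow \textbf{\textit{x}}_{1}$ and $\Omega\leftrightarrow \Omega_{1}$, using Fubini on the product measure $d\mu_{1,1}$ to interchange the two one-dimensional integrals if needed. After collecting terms, the inner one-dimensional translation is precisely the function $g$ displayed in the statement, and substituting it back into the outer integrals yields the claimed nested representation with weight $\psi_{\textbf{\textit{k}}}$.

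The main obstacle is bookkeeping rather than genuine analysis. On the one hand, one must verify that the prefactor $c^{2}_{\textbf{\textit{k}}_{p}} c^{2}_{\textbf{\textit{k}}_{q}}$ produced by the preceding proposition combines with the normalizations implicit in R\"{o}sler's formula so that no extraneous scalar survives in the final expression. On the other hand, since $\mathbb{H}$ is noncommutative and the CDT is two-sided, the position of the basis elements $e_{A}$ must be tracked carefully throughout, so that the inner translation $g$ is correctly placed to the left of the outer kernel factor, consistently with the two-sided ordering $E_{p,\textbf{\textit{k}}_{p}}(\cdot)\,f\,E_{q,\textbf{\textit{k}}_{q}}(\cdot)$ in the definition of the CDT.
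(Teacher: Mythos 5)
Your proposal is correct and follows essentially the same route as the paper: the paper's proof likewise unpacks $\tau_{\textbf{\textit{z}}}f$ into two nested one-dimensional Dunkl translations (recognizing the inner one as $g=\tau_{z_{1}}f(\cdot,\textbf{\textit{x}}_{2})$ and the outer one as $\tau_{z_{2}}g$) and then applies R\"{o}sler's formula (\ref{EXPLICITD1}) twice, the only cosmetic difference being that you invoke the preceding proposition on $\tilde{\tau}^{\textbf{\textit{k}}_{p}}_{\textbf{\textit{z}}_{1}}\circ\tilde{\tau}^{\textbf{\textit{k}}_{q}}_{\textbf{\textit{z}}_{2}}$ while the paper re-expands the inversion formula directly. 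Your explicit flagging of the normalization constants is apt, since the paper's own computation absorbs the $c_{\textbf{\textit{k}}}$ factors into the underbraced translations without detailed bookkeeping.
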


\begin{proof}
Applying Formula (\ref{EXPLICITD1}) in two instances yields
\begin{align*}
&\tau_{\textbf{\textit{z}}} f(\textbf{\textit{x}})\\
 &= c_{\textbf{\textit{k}}}^{2} \int_{\mathbb{R}^{2}} \underbrace{c_{\textbf{\textit{k}}} \int_{\mathbb{R}} E_{\textbf{\textit{k}}}(\textbf{\textit{x}}_{1}, a\textbf{\textit{y}}_{1}) E_{\textbf{\textit{k}}}(z_{1}, -a\textbf{\textit{y}}_{1}) \overbrace{c_{\textbf{\textit{k}}} \int_{\mathbb{R}} E_{\textbf{\textit{k}}}(\textbf{\textit{x}}_{1}, -a\textbf{\textit{y}}_{1}) f(\textbf{\textit{x}}_{1}, \textbf{\textit{x}}_{2}) d\mu^{\textbf{\textit{k}}}_{1}( \textbf{\textit{x}}_{1})}^{\mathcal{F}_{\textbf{\textit{k}}}(f(\textbf{\textit{x}}_{1}, \cdot))(\textbf{\textit{y}}_{1})} d\mu^{\textbf{\textit{k}}}_{1}( \textbf{\textit{y}}_{1})}_{\tau_{z_{1}} f(\textbf{\textit{x}}_{1}, \cdot)} \\
&  E_{\textbf{\textit{k}}}(\textbf{\textit{x}}_{2}, -b\textbf{\textit{y}}_{2}) E_{\textbf{\textit{k}}}(z_{2}, -b\textbf{\textit{y}}_{2}) E_{\textbf{\textit{k}}}(\textbf{\textit{x}}_{2}, b\textbf{\textit{y}}_{2}) d\mu^{\textbf{\textit{k}}}_{1}( \textbf{\textit{x}}_{2}) d\mu^{\textbf{\textit{k}}}_{1}( \textbf{\textit{y}}_{2}) \\
&= c_{\textbf{\textit{k}}}^{2} \int_{\mathbb{R}^{2}} g(\textbf{\textit{x}}_{2}) E_{\textbf{\textit{k}}}(\textbf{\textit{x}}_{2}, -b\textbf{\textit{y}}_{2}) E_{\textbf{\textit{k}}}(z_{2}, -b\textbf{\textit{y}}_{2}) E_{\textbf{\textit{k}}}(\textbf{\textit{x}}_{2}, b\textbf{\textit{y}}_{2}) d\mu^{\textbf{\textit{k}}}_{1}( \textbf{\textit{x}}_{2}) d\mu^{\textbf{\textit{k}}}_{1}( \textbf{\textit{y}}_{2}) \\
&= c_{\textbf{\textit{k}}} \int_{\mathbb{R}} c_{\textbf{\textit{k}}} \int_{\mathbb{R}} g(\textbf{\textit{x}}_{2}) E_{\textbf{\textit{k}}}(\textbf{\textit{x}}_{2}, -b\textbf{\textit{y}}_{2}) d\mu^{\textbf{\textit{k}}}_{1}( \textbf{\textit{x}}_{2}) E_{\textbf{\textit{k}}}(z_{2}, -b\textbf{\textit{y}}_{2}) E_{\textbf{\textit{k}}}(\textbf{\textit{x}}_{2}, b\textbf{\textit{y}}_{2}) d\mu^{\textbf{\textit{k}}}_{1}( \textbf{\textit{y}}_{2}) \\
&=  \tau_{z_{2}} g(\textbf{\textit{x}}_{2}).
\end{align*}
\end{proof}
As a consequence, we conclude the results of this section by defining the convolution for the QDT
\begin{definition}
For $ f, \, g \in L^{2}_{\textbf{\textit{k}}_{\scriptscriptstyle d}}(\mathbb{R}^{p,q};Cl_{p,q}) $, the generalized convolution denoted by $f \star g$, is defined by
 \begin{align}\label{CPR}
 (f \star g)( \textbf{\textit{x}} )  =  \int_{\mathbb{R}^{p,q}}  f(\textbf{\textit{z}}) \tau_{\textbf{\textit{z}}} g(\textbf{\textit{x}}) d\mu_{p,q}(\textbf{\textit{z}}).
 \end{align}
\end{definition}
 \section{Analogue of Miyachi's theorem for the C.D.T}
 In this section, we establish an analogue of Miyachi's theorem for the Clifford Dunkl transform. Before presenting the theorem, we introduce the following lemma, which is essential for its proof.
 \begin{lemma} \cite[Lemma 1]{key-co}\label{lem}\\
Let $h$ be an entire function defined on $\mathbb{C}^{d}$, satisfying the following conditions:
$$\| h(y) \| \leq A e^{B \| \operatorname{Re} (z) \|^{2}} $$ 
and 
$$ \int_{\mathbb{R}^{d}} \log^{+} \| h(y) \| \, dy < \infty,$$

for some positive constants $A$ and $B$. Then, $h$ is a constant function.
\end{lemma}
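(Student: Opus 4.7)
The plan is to reduce the problem to one complex variable and then combine the Phragmén--Lindelöf principle with a Liouville-type conclusion extracted from the logarithmic integrability hypothesis. The statement is a purely complex-analytic fact; neither Dunkl theory nor the Clifford structure play any role in its proof.

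First, I would reduce to the case $d=1$. For a fixed nonzero $\xi \in \mathbb{R}^{d}$, define the entire function of one complex variable $h_{\xi}(\zeta) := h(\zeta\xi)$. The hypothesis yields
\[
\| h_{\xi}(\zeta) \| \leq A \exp\bigl( B \|\xi\|^{2} (\operatorname{Re} \zeta)^{2} \bigr),
\]
so that $h_{\xi}$ enjoys the same kind of estimate in one variable. Regarding the second hypothesis, foliating $\mathbb{R}^{d}$ by the family of lines $\mathbb{R}\xi + \eta$ and applying a Fubini slicing to $\int_{\mathbb{R}^{d}} \log^{+}\|h(y)\|\,dy < \infty$, one obtains integrability of $\log^{+}\|h\|$ along a positive measure set of parallel lines. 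Once I prove the one-dimensional statement for each such restriction, continuity will force $h_{\xi}$ to be constant for every direction $\xi$, hence $h$ itself is constant on $\mathbb{C}^{d}$.

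Second, in the one-dimensional case, the key observation is that on the imaginary axis $\operatorname{Re}(iy) = 0$, so $h$ is bounded by $A$ there. Meanwhile, $\| h(z)\| \leq A e^{B|z|^{2}}$ globally, which makes $h$ an entire function of order at most $2$. I would apply the Phragmén--Lindelöf principle in the first quadrant (and symmetrically in the other three): the order $2$ growth is sub-exponential in each quadrant of opening $\pi/2$, so boundedness on the imaginary axis combined with the growth on the real axis yields a sharp majorization of $h$ in the entire plane in terms of its real-line values.

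Third, I would invoke a Carleman-type formula in a half-plane (or, equivalently, work with the subharmonic function $u(z) = \log\|h(z)\|$ and control its Riesz measure). The integrability $\int_{\mathbb{R}} \log^{+}\|h(x)\|\,dx < \infty$ together with the order-$2$ bound and the boundedness on $i\mathbb{R}$ force the Nevanlinna characteristic of $h$ to be bounded, after which a classical Liouville-type conclusion identifies $h$ with a constant. The main obstacle is this last quantitative step: matching the Phragmén--Lindelöf majorant against the finite $\log^{+}$-integral to conclude that no non-constant entire function can satisfy all three constraints simultaneously. This is exactly the content of Miyachi's lemma as it appears in~\cite{key-co}, which I would cite for the delicate balance between the order-$2$ growth and the logarithmic integrability.
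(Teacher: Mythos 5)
First, note that the paper does not actually prove this lemma: it is imported verbatim as Lemma 1 of \cite{key-co} (itself the $d$-dimensional form of Miyachi's original complex-analytic lemma), so there is no internal proof to compare yours against. Your reduction to one variable --- slicing along lines parallel to a fixed direction $\xi$, using Fubini to get $\log^{+}$-integrability of $\|h\|$ on almost every such line, proving constancy line by line, and upgrading ``constant on almost every parallel line'' to ``constant everywhere'' by continuity and then analytic continuation --- is sound and is a reasonable way to organize the $d$-dimensional statement (though the cited lemma is already stated on $\mathbb{C}^{d}$).

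The one-dimensional core of your argument, however, has a genuine gap, and you concede as much by deferring the ``delicate balance'' to the very reference the paper cites. Two concrete problems. (i) The Phragm\'en--Lindel\"of step does not apply as described: a quadrant has opening $\pi/2$, which is exactly the critical aperture for entire functions of order $2$ and finite positive type. The function $e^{Bz^{2}}$ satisfies $|e^{Bz^{2}}|\le e^{B(\operatorname{Re}z)^{2}}$, is bounded by $1$ on the imaginary axis, and is unbounded in every quadrant, so no Phragm\'en--Lindel\"of majorization of $h$ ``in terms of its real-line values'' is available; moreover $h$ is not assumed bounded on $\mathbb{R}$, only $\log^{+}$-integrable there, so there is no boundary majorant to propagate in the first place. (ii) The Carleman/Nevanlinna step is precisely where the hypothesis $\int_{\mathbb{R}}\log^{+}\|h\|\,dx<\infty$ must be played off against the order-$2$ growth (for instance by dominating $\log\|h\|$ by the Poisson integral of its restriction to $\mathbb{R}$ and showing that this Poisson integral degenerates); this is the entire content of the lemma, not a routine final step. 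Writing that this step ``is exactly the content of Miyachi's lemma as it appears in \cite{key-co}, which I would cite'' means your proposal ultimately reduces to the same citation the paper makes, wrapped in a preliminary reduction that, while correct, is not where the difficulty lies.
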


\begin{theorem}\label{miyachi}
Let $f: \mathbb{R}^{p,q} \rightarrow Cl_{p,q}$ be a measurable function such that
\begin{align}\label{condd1}
e^{\alpha \Vert \textbf{\textit{x}} \Vert_{c}^{2}} f \in L^{n}_{\textbf{\textit{k}}_{\scriptscriptstyle d}}(\mathbb{R}^{p,q};Cl_{p,q}) + L^{m}_{\textbf{\textit{k}}_{\scriptscriptstyle d}}(\mathbb{R}^{p,q};Cl_{p,q}),
\end{align}
and
\begin{align}\label{condd2}
\int_{\mathbb{R}^{p,q}} \log^{+} \frac{\Vert \mathcal{F}^{-a,-b}\lbrace f \rbrace(\textbf{\textit{y}})e^{\beta \Vert \textbf{\textit{y}} \Vert_{c}^{2} } \Vert_{c}^{2}}{\lambda} d\textbf{\textit{y}} < \infty,
\end{align}
for some constants $\alpha > 0$, $\beta > 0$, $\lambda > 0$, and $1 \leq n, m \leq +\infty$.
\begin{enumerate}
    \item If $\alpha \beta > \frac{1}{4}$, then $f = 0$ almost everywhere.
    \item If $\alpha \beta = \frac{1}{4}$, then $f(\textbf{\textit{x}}) = C e^{-\alpha \Vert \textbf{\textit{x}} \Vert_{c}^{2}}$ with $\Vert C \Vert_{c} \leq \lambda$.
    \item If $\alpha \beta < \frac{1}{4}$, then for all $\delta \in ]\beta, \frac{1}{4\alpha}[$, all functions of the form $f(\textbf{\textit{x}}) = \frac{1}{(2\delta)^{\gamma_{\scriptscriptstyle d} + d}} P(\textbf{\textit{x}}) e^{- \frac{\Vert \textbf{\textit{x}} \Vert_{c}^{2}}{4\delta}}$, where $P$ is a polynomial taking values in $Cl_{p,q}$, satisfy (\ref{condd1}) and (\ref{condd2}).
\end{enumerate}
\end{theorem}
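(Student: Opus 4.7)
The plan is to adapt the classical Miyachi argument to the CDT: analytically extend $\mathcal{F}^{-a,-b}\{f\}$ in the Clifford-complexified variables, derive a Gaussian growth bound from condition (\ref{condd1}), multiply by a compensating Gaussian to obtain an entire function governed by (\ref{condd2}), and then apply Lemma \ref{lem} componentwise.

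Assuming first $\alpha\beta \geq 1/4$, I would decompose $e^{\alpha\Vert \textbf{\textit{x}}\Vert_c^2} f = f_1 + f_2$ with $f_1 \in L^{n}_{\textbf{\textit{k}}_{\scriptscriptstyle d}}(\mathbb{R}^{p,q};Cl_{p,q})$ and $f_2 \in L^{m}_{\textbf{\textit{k}}_{\scriptscriptstyle d}}(\mathbb{R}^{p,q};Cl_{p,q})$. Using the holomorphic extension of the Dunkl kernels and bound (\ref{DK}), the transform $G(\textbf{\textit{y}}) := \mathcal{F}^{-a,-b}\{f\}(\textbf{\textit{y}})$ extends to the Clifford-complexified variables $z_1 = y_1 + a w_1$, $z_2 = y_2 + b w_2$, with $(w_1,w_2) \in \mathbb{R}^p \times \mathbb{R}^q$. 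Since $a^2 = b^2 = -1$, one computes $-a z_1 = w_1 - a y_1$, so that $\Vert E_{p, \textbf{\textit{k}}_{\scriptscriptstyle p}}(x_1, -a z_1)\Vert_c \leq e^{\Vert x_1\Vert\, \Vert w_1\Vert}$ by (\ref{DK}), and analogously for the right kernel. Combining this with Hölder's inequality on the $L^n + L^m$ decomposition (treating $L^\infty$ cases directly) and with the Gaussian estimate $\int_{\mathbb{R}^{p,q}} e^{-\alpha\Vert \textbf{\textit{x}}\Vert_c^2 + \Vert x_1\Vert\,\Vert w_1\Vert + \Vert x_2\Vert\,\Vert w_2\Vert}\,d\mu_{p,q}(\textbf{\textit{x}}) \leq C\, e^{(\Vert w_1\Vert^2+\Vert w_2\Vert^2)/(4\alpha)}$ yields the master bound
\[
\Vert G(z_1, z_2) \Vert_c \leq M \exp\!\Big(\tfrac{\Vert w_1\Vert^2 + \Vert w_2\Vert^2}{4\alpha}\Big).
\]

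Next, I would introduce the gauge $\Phi(z_1, z_2) := G(z_1, z_2) \exp\!\big(\beta \sum_{j=1}^{p} z_{1,j}^2 + \beta \sum_{j=1}^{q} z_{2,j}^2\big)$, entire on the complexified space. Expanding $z_{1,j}^2 = y_{1,j}^2 - w_{1,j}^2 + 2a\, y_{1,j} w_{1,j}$ (and similarly for $z_2$) and using $a^2 = b^2 = -1$, the oscillatory factor $e^{2a\beta y_{1,j} w_{1,j}}$ has unit Clifford modulus, so
\[
\Vert \Phi(z_1, z_2) \Vert_c \leq M\, e^{\beta(\Vert y_1\Vert^2 + \Vert y_2\Vert^2)} \exp\!\Big(\big(\tfrac{1}{4\alpha} - \beta\big)(\Vert w_1\Vert^2 + \Vert w_2\Vert^2)\Big).
\]
When $\alpha\beta \geq 1/4$ the $(w_1,w_2)$-coefficient is nonpositive, so $\Vert \Phi(z)\Vert_c \leq M e^{\beta \Vert \mathrm{Re}(z)\Vert^2}$ under the identification $(y, w) \leftrightarrow (\mathrm{Re}(z), \mathrm{Im}(z))$. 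On the real slice $\Vert \Phi(\textbf{\textit{y}}, 0) \Vert_c = \Vert G(\textbf{\textit{y}})\Vert_c\, e^{\beta \Vert \textbf{\textit{y}}\Vert_c^2}$, so (\ref{condd2}) gives $\int_{\mathbb{R}^{p,q}} \log^+ \Vert \Phi(\textbf{\textit{y}}, 0)\Vert_c \, d\textbf{\textit{y}} < \infty$. Decomposing $\Phi = \sum_A \Phi_A\, e_A$ into scalar entire components and applying Lemma \ref{lem} to each $\Phi_A$ yields $\Phi \equiv C$ for some Clifford constant $C$, hence $G(\textbf{\textit{y}}) = C\, e^{-\beta \Vert \textbf{\textit{y}}\Vert_c^2}$. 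If $\alpha\beta > 1/4$, the imaginary-axis decay forces $C = 0$ and Theorem \ref{IVF} gives $f = 0$ almost everywhere (part 1). If $\alpha\beta = 1/4$, inversion combined with (\ref{GGQDT}) at $\delta = \alpha$ gives $f(\textbf{\textit{x}}) = C' e^{-\alpha \Vert \textbf{\textit{x}}\Vert_c^2}$, and the bound $\Vert C\Vert_c \leq \lambda$ follows since any nonzero $\log^+(\Vert C\Vert_c^2/\lambda)$ would make (\ref{condd2}) diverge (part 2). For part 3, given $\delta \in (\beta, 1/(4\alpha))$, I would verify directly from (\ref{GGQDT}) and its derivatives (which generate polynomial factors on the transform side) that Gaussian--polynomial $f$ satisfies (\ref{condd1}) (its Gaussian decay beats $e^{\alpha\Vert\textbf{\textit{x}}\Vert_c^2}$) and (\ref{condd2}) (the CDT is again of Gaussian--polynomial type with decay $e^{-\Vert\textbf{\textit{y}}\Vert_c^2/(4\delta)}$ dominating $e^{\beta\Vert\textbf{\textit{y}}\Vert_c^2}$).

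The main obstacle I anticipate is the componentwise reduction to Lemma \ref{lem}: because the CDT is two-sided, the Clifford decomposition $\Phi = \sum_A \Phi_A\, e_A$ intertwines the left multiplication by $a$ with the right multiplication by $b$, so the real components of $f$ couple nontrivially and one must carefully verify that each scalar $\Phi_A$ remains entire and inherits both the exponential Gaussian bound and the log-integrability on the real slice. A secondary delicate point is confirming the identity $\vert e^{a\theta}\vert_c = 1$ for $a^2 = -1$ (equivalently, $\widetilde{a} = -a$), which is needed to absorb the oscillatory parts of $e^{\beta z_{1,j}^2}$ into the modulus estimate; this I would verify from the grade structure of the chosen roots of $-1$ before invoking it in the gauge computation.
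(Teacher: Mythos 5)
Your proposal follows the same overall architecture as the paper — holomorphically extend $\mathcal{F}^{-a,-b}\lbrace f\rbrace$, derive the Gaussian growth bound $\Vert G(\textbf{\textit{z}})\Vert_{c}\leq M e^{\Vert \operatorname{Im}(\textbf{\textit{z}})\Vert^{2}/4\alpha}$ from (\ref{condd1}) via the decomposition $f=e^{-\alpha\Vert\textbf{\textit{x}}\Vert_{c}^{2}}(u+v)$ and H\"older, multiply by a compensating Gaussian, invoke Lemma \ref{lem}, and finish with the inversion formula and (\ref{ll66}) — but it differs in two genuine ways. First, your gauge is $e^{\beta z^{2}}$ rather than the paper's $e^{-z^{2}/4\alpha}$; this makes the real-slice log-integrability an immediate restatement of (\ref{condd2}) and avoids the paper's $\log^{+}(cs)\leq\log^{+}(c)+s$ step, at the cost of having to extract $C=0$ in case 1 from the imaginary-axis comparison $\Vert C\Vert_{c}e^{\beta\Vert w\Vert^{2}}\leq Me^{\Vert w\Vert^{2}/4\alpha}$ instead of directly from (\ref{condd2}); both deductions are valid, and in the case $\alpha\beta=\tfrac14$ the two gauges coincide. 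Second, and more consequentially, you complexify internally via $z_{1}=y_{1}+aw_{1}$, $z_{2}=y_{2}+bw_{2}$, whereas the paper complexifies with the external scalar unit $i_{\mathbb{C}}$, i.e.\ $\textbf{\textit{z}}=\varepsilon+i_{\mathbb{C}}\eta\in\mathbb{C}^{d}$.

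The second difference is where the one real difficulty sits, and you correctly identify it yourself without resolving it. With the internal complexification, ``entire'' means left-$a$-holomorphic in $z_{1}$ and right-$b$-holomorphic in $z_{2}$; the Cauchy--Riemann equations then couple the real components $\Phi_{A}$ through the matrices of left multiplication by $a$ and right multiplication by $b$, so the individual $\Phi_{A}$ are \emph{not} scalar entire functions of a single complex variable structure, and Lemma \ref{lem} does not apply to them as stated. One can repair this by diagonalizing the two commuting involutive structures over $\mathbb{C}$, but the cleaner fix is simply the paper's: extend in $i_{\mathbb{C}}$, so that $\mathcal{F}^{-a,-b}\lbrace f\rbrace(\textbf{\textit{z}})$ is an ordinary $\mathbb{C}\otimes Cl_{p,q}$-valued holomorphic function on $\mathbb{C}^{d}$ and each component is honestly entire. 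Your secondary worry, that $\Vert e^{a\theta}\Vert_{c}=1$ requires $\widetilde{a}=-a$ and does not hold for an arbitrary square root of $-1$ in $Cl_{p,q}$, is also legitimate (the paper itself implicitly assumes the analogue of (\ref{yy}) for the kernels $E_{p,\textbf{\textit{k}}_{p}}(\textbf{\textit{x}}_{1},-a\textbf{\textit{y}}_{1})$ without comment), but it becomes moot once the gauge is built from the scalar $i_{\mathbb{C}}$ as in the paper. Parts 2 and 3 of your argument match the paper's.
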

\begin{proof}\rm
We will divide the proof into two steps.

\textbf{Step $1$}: We will show that the function $\mathcal{ F}^{-a,-b} \lbrace f \rbrace$ is well defined on $\mathbb{C}^{d}$ and there exists $C > 0$ such that 
\begin{align}\label{condd3}
\Vert \mathcal{ F}^{-a,-b} \lbrace f  \rbrace (\textbf{\textit{z}}) \Vert_{c} \leq C e^{\frac{\Vert  \operatorname{Im}(\textbf{\textit{z}}) \Vert_{c}^{2}}{4 \alpha}}, ~~ \forall \textbf{\textit{z}} \in \mathbb{C}^{d}.
\end{align}
Let $ \textbf{\textit{z}} \in \mathbb{C}^{d}$,  by complexifying the variable 
\begin{align}\label{condd4}
\textbf{\textit{z}} = \varepsilon +i_{\C} \eta,
\end{align}
where $ \varepsilon = (\varepsilon_{1}, \varepsilon_{2}), ~~ \eta = ( \eta_{1}, \eta_{2}) \in \mathbb{R}^{p,q} $. 

Then, using (\ref{DK}), we obtain
\begin{eqnarray*}
\Vert \mathcal{ F}^{-a,-b} \lbrace f(\textbf{\textit{x}})  \rbrace (\varepsilon +i_{\C} \eta) \Vert_{c} & \leq & \int_{\mathbb{R}^{p,q}} e^{\Vert  
\textbf{\textit{x}}_{1}\Vert_{c} \Vert \eta_{1} \Vert_{c}} \Vert f(\textbf{\textit{x}}_{1},\textbf{\textit{x}}_{2})\Vert_{c} e^{\Vert  
\textbf{\textit{x}}_{2}\Vert_{c} \Vert \eta_{2} \Vert_{c}} d\mu_{p,q}(\textbf{\textit{x}})\\
& \leq & \int_{\mathbb{R}^{p,q}} e^{\alpha \Vert \textbf{\textit{x}}_{1} \Vert_{c}^{2}} e^{\frac{\Vert \eta_{1} \Vert_{c}^{2}}{4 \alpha}} e^{\Vert  
\textbf{\textit{x}}_{1}\Vert_{c} \Vert \eta_{1} \Vert_{c} - \frac{\Vert \eta_{1} \Vert_{c}^{2}}{4 \alpha} -\alpha \Vert \textbf{\textit{x}}_{1} \Vert_{c}^{2} } \Vert f(\textbf{\textit{x}}_{1},\textbf{\textit{x}}_{2})\Vert_{c}\\
& & e^{\alpha \Vert \textbf{\textit{x}}_{2} \Vert_{c}^{2}} e^{\frac{\Vert \eta_{2} \Vert_{c}^{2}}{4 \alpha}} e^{\Vert  
\textbf{\textit{x}}_{2}\Vert_{c} \Vert \eta_{2} \Vert_{c} - \frac{\Vert \eta_{2} \Vert_{c}^{2}}{4 \alpha} -\alpha \Vert \textbf{\textit{x}}_{2} \Vert_{c}^{2} }  
 d\mu_{p,q}(\textbf{\textit{x}}).
\end{eqnarray*}
We put 
\begin{align*}
g(\textbf{\textit{x}}_{1},\textbf{\textit{x}}_{2}) = e^{\Vert  \textbf{\textit{x}}_{1}\Vert_{c} \Vert \eta_{1} \Vert_{c} - \frac{\Vert \eta_{1} \Vert_{c}^{2}}{4 \alpha} -\alpha \Vert \textbf{\textit{x}}_{1} \Vert_{c}^{2} } e^{\Vert  
\textbf{\textit{x}}_{2}\Vert_{c} \Vert \eta_{2} \Vert_{c} - \frac{\Vert \eta_{2} \Vert_{c}^{2}}{4 \alpha} -\alpha \Vert \textbf{\textit{x}}_{2} \Vert_{c}^{2} }
= e^{-\alpha ( \Vert \textbf{\textit{x}}_{1} \Vert_{c}^{} - \frac{\Vert \eta_{1} \Vert_{c}^{}}{2 \alpha})^{2}} e^{-\alpha ( \Vert \textbf{\textit{x}}_{2} \Vert_{c}^{} - \frac{\Vert \eta_{2} \Vert_{c}^{}}{2 \alpha})^{2}}.
\end{align*}
Then, since $g(\textbf{\textit{x}}_{1},\textbf{\textit{x}}_{2}) \in  L^{n}_{\textbf{\textit{k}}_{\scriptscriptstyle d}}(\mathbb{R}^{p,q};Cl_{p,q}) \cap L^{m}_{\textbf{\textit{k}}_{\scriptscriptstyle d}}(\mathbb{R}^{p,q};Cl_{p,q})$, $\Vert \textbf{\textit{x}} \Vert_{c}^{2} = \Vert \textbf{\textit{x}}_{1} \Vert_{c}^{2}+ \Vert \textbf{\textit{x}}_{2} \Vert_{c}^{2}$ and $ \Vert \eta \Vert_{c}^{2} = \Vert \eta_{1} \Vert_{c}^{2} + \Vert \eta_{2} \Vert_{c}^{2}$, we have
\begin{eqnarray*}
\Vert \mathcal{ F}^{-a,-b} \lbrace f(\textbf{\textit{x}})  \rbrace (\varepsilon +i_{\C} \eta) \Vert_{c}& \leq & e^{\frac{\Vert \eta \Vert_{c}^{2}}{4 \alpha}} \int_{\mathbb{R}^{p,q}} e^{\alpha \Vert \textbf{\textit{x}} \Vert_{c}^{2}} \Vert f(\textbf{\textit{x}})\Vert_{c}  g(\textbf{\textit{x}}) d\mu_{p,q}(\textbf{\textit{x}}).
\end{eqnarray*}
Hence,  $\mathcal{ F}^{-a,-b} \lbrace f \rbrace$  is well defined on $\mathbb{C}^{d}$.\\
On the other hand, by (\ref{condd1}) there exists $ u \in  L^{n}_{\textbf{\textit{k}}_{\scriptscriptstyle d}}(\mathbb{R}^{p,q};Cl_{p,q})$ and  $ v \in  L^{m}_{\textbf{\textit{k}}_{\scriptscriptstyle d}}(\mathbb{R}^{p,q};Cl_{p,q})$ such that 
\begin{align*}
f(\textbf{\textit{x}}) =e^{-\alpha \Vert \textbf{\textit{x}} \Vert_{c}^{2}} (u(\textbf{\textit{x}})+ v(\textbf{\textit{x}})).
\end{align*}
Using triangle inequality, the linearity of the integral, and H\"{o}lder's inequality, one has
\begin{eqnarray*}
\Vert \mathcal{ F}^{-a,-b} \lbrace f(\textbf{\textit{x}})  \rbrace (\varepsilon +i_{\C} \eta) \Vert_{c} & \leq & e^{\frac{\Vert \eta \Vert_{c}^{2}}{4 \alpha}} \left(  \int_{\mathbb{R}^{p,q}} \Vert u(\textbf{\textit{x}}) \Vert_{c} g(\textbf{\textit{x}}) d\mu_{p,q}(\textbf{\textit{x}}) +\int_{\mathbb{R}^{p,q}} \Vert v(\textbf{\textit{x}}) \Vert_{c}  g(\textbf{\textit{x}}) d\mu_{p,q}(\textbf{\textit{x}})\right).\\
& \leq & e^{\frac{\Vert \eta \Vert_{c}^{2}}{4 \alpha}} \left( \vert u(\textbf{\textit{x}}) \vert_{\textbf{\textit{k}}_{\scriptscriptstyle d},n}\vert g(\textbf{\textit{x}}) \vert_{\textbf{\textit{k}}_{\scriptscriptstyle d},m} +\vert v(\textbf{\textit{x}}) \vert_{\textbf{\textit{k}}_{\scriptscriptstyle d},m}\vert g(\textbf{\textit{x}}) \vert_{\textbf{\textit{k}}_{\scriptscriptstyle d},n} \right).
\end{eqnarray*}
Therefore, the desired result follows.

\textbf{Step $2$:} Let 
\begin{align*}
h(\textbf{\textit{z}}) = e^{\frac{-\textbf{\textit{z}}^{2}}{4\alpha}} \mathcal{ F}^{-a,-b} \lbrace f  \rbrace (\textbf{\textit{z}}), ~~ \forall \textbf{\textit{z}} \in \mathbb{C}^{d}.
\end{align*}
Clearly, $h$ is an entire function.\\ 
Using (\ref{condd4}), we have 
\begin{align*}
\textbf{\textit{z}}^{2} = - \varepsilon^{2} + \eta^{2}-2i_{\C} \langle \varepsilon ,\eta \rangle,
\end{align*}
which implies
\begin{align*}
\Vert e^{\frac{-\textbf{\textit{z}}^{2}}{4\alpha}} \Vert_{c} \leq e^{\frac{\Vert \varepsilon \Vert_{c}^{2}}{4\alpha}} e^{-\frac{\Vert \eta \Vert_{c}^{2}}{4\alpha}}.
\end{align*}
Formula (\ref{condd3}) yields
\begin{align}\label{hc}
\Vert h(\textbf{\textit{z}}) \Vert_{c} \leq C e^{\frac{\Vert \varepsilon \Vert_{c}^{2}}{4\alpha}}.
\end{align}
\begin{enumerate}
\item If $ \alpha \beta  > \frac{1}{4}$, hence $\log^{+}(cs) \leq \log^{+}(c)+s$ for all $c, \, s >0$, it follows that:

\begin{eqnarray*}
\int_{\mathbb{R}^{p,q}} \log^{+} \Vert h(\textbf{\textit{y}}) \Vert_{c} d\textbf{\textit{y}} & = & \int_{\mathbb{R}^{p,q}} \log^{+} \Vert e^{\frac{-\textbf{\textit{y}}^{2}}{4\alpha}} \mathcal{ F}^{-a,-b} \lbrace f  \rbrace (\textbf{\textit{y}})\Vert_{c} d\textbf{\textit{y}}\\
& = & \int_{\mathbb{R}^{p,q}} \log^{+} \Vert e^{\frac{ \Vert \textbf{\textit{y}} \Vert_{c}^{2}}{4\alpha}} \mathcal{ F}^{-a,-b} \lbrace f  \rbrace (\textbf{\textit{y}}) \Vert_{c} d\textbf{\textit{y}}\\
& =& \int_{\mathbb{R}^{p,q}} \log^{+} \left(  \frac{\Vert e^{\beta \Vert \textbf{\textit{y}} \Vert_{c}^{2} }\mathcal{ F}^{-a,-b} \lbrace f  \rbrace (\textbf{\textit{y}})\Vert_{c}}{\lambda} \lambda e^{(\frac{1}{4\alpha}-\beta) \Vert \textbf{\textit{y}} \Vert_{c}^{2} } \right)  d\textbf{\textit{y}}\\
& \leq & \int_{\mathbb{R}^{p,q}} \log^{+}  \frac{\Vert e^{\beta \Vert \textbf{\textit{y}} \Vert_{c}^{2} }\mathcal{ F}^{-a,-b} \lbrace f  \rbrace (\textbf{\textit{y}})\Vert_{c}}{\lambda} d\textbf{\textit{y}} + \int_{\mathbb{R}^{p,q}}  \lambda e^{(\frac{1}{4\alpha}-\beta) \Vert \textbf{\textit{y}} \Vert_{c}^{2} } d\textbf{\textit{y}}.
\end{eqnarray*}
Using the fact that $\alpha \beta > \frac{1}{4}$ and the assumption (\ref{condd2}), we obtain
\begin{align*}
\int_{\mathbb{R}^{p,q}} \log^{+} \Vert h(\textbf{\textit{y}}) \Vert_{c} d\textbf{\textit{y}}  <  +\infty.
\end{align*}
Thus, $h$ satisfies the conditions of Lemma \ref{lem}; therefore, $h$ is constant.
\begin{align*}
\mathcal{ F}^{-a,-b} \lbrace f  \rbrace (\textbf{\textit{y}}) = C e^{-\frac{ \Vert \textbf{\textit{y}} \Vert_{c}^{2}}{4\alpha}}.
\end{align*}
According to the relation (\ref{condd2}), the constant $C$ is zero.

Then, $\mathcal{ F}^{-a,-b} \lbrace f  \rbrace (\textbf{\textit{y}}) = 0$, which implies that $f = 0$.
\item If $ \alpha \beta  = \frac{1}{4}$, employing the same proof as in the previous case, we obtain
\begin{align*}
 \int_{\mathbb{R}^{p,q}} \log^{+} \frac{\Vert h(\textbf{\textit{y}}) \Vert_{c}}{\lambda} d\textbf{\textit{y}} =  \int_{\mathbb{R}^{p,q}} \log^{+} \frac{\Vert e^{\beta \Vert \textbf{\textit{y}} \Vert_{c}^{2}} \mathcal{ F}^{-a,-b} \lbrace f  \rbrace (\textbf{\textit{y}}) \Vert_{c}}{\lambda} d\textbf{\textit{y}} <  +\infty.
\end{align*}
Then, from (\ref{hc}) and Lemma \ref{lem}, we deduce that 
 \begin{align*}
 \mathcal{ F}^{-a,-b} \lbrace f  \rbrace (\textbf{\textit{y}}) = C e^{- \frac{ \Vert \textbf{\textit{y}} \Vert_{c}^{2}}{4 \alpha }}.
 \end{align*}
 So, the assumption on $\mathcal{ F}^{-a,-b} \lbrace f  \rbrace $, implies that $ \Vert C \Vert_{c} \leq \lambda $.\\
Then, by using inversion formulas (\ref{inversin}) and (\ref{ll66}), we have
\begin{eqnarray*}
f (\textbf{\textit{x}}) & = & c^{2}_{\textbf{\textit{k}}_{p}} c^{2}_{\textbf{\textit{k}}_{q}} \int_{\mathbb{R}^{p,q}} E_{p, \textbf{\textit{k}}_{p}}(\textbf{\textit{x}}_{1},a \textbf{\textit{y}}_{1}) \mathcal{ F}^{-a,-b} \lbrace f (\textbf{\textit{x}}_{1},\textbf{\textit{x}}_{2}) \rbrace(\textbf{\textit{y}}_{1},\textbf{\textit{y}}_{2})E_{q, \textbf{\textit{k}}_{q}}(\textbf{\textit{x}}_{2},b \textbf{\textit{y}}_{2})d\mu_{p,q}(\textbf{\textit{y}})\\
& = & c^{2}_{\textbf{\textit{k}}_{p}} c^{2}_{\textbf{\textit{k}}_{q}} \int_{\mathbb{R}^{p,q}} E_{p, \textbf{\textit{k}}_{p}}(\textbf{\textit{x}}_{1},a \textbf{\textit{y}}_{1})C  e^{- \frac{ \Vert \textbf{\textit{y}} \Vert_{c}^{2}}{4 \alpha }} E_{q, \textbf{\textit{k}}_{q}}(\textbf{\textit{x}}_{2},b \textbf{\textit{y}}_{2})d\mu_{p,q}(\textbf{\textit{y}})\\
&= &  c^{2}_{\textbf{\textit{k}}_{p}} c^{2}_{\textbf{\textit{k}}_{q}}  \int_{\mathbb{R}^{p}} E_{p,\textbf{\textit{k}}_{p}}(\textbf{\textit{x}}_{1},a \textbf{\textit{y}}_{1})C  e^{- \frac{ \Vert \textbf{\textit{y}}_{1} \Vert_{c}^{2}}{4 \alpha }} d\mu^{\textbf{\textit{k}}_{p}}_{p}(\textbf{\textit{y}}_{1}) C  \int_{\mathbb{R}^{q}}  e^{- \frac{ \Vert \textbf{\textit{y}}_{2} \Vert_{c}^{2}}{4 \alpha }} E_{q, \textbf{\textit{k}}_{q}}(\textbf{\textit{x}}_{2},b \textbf{\textit{y}}_{2}) d\mu^{\textbf{\textit{k}}_{q}}_{q}(\textbf{\textit{y}}_{2})\\
&=& c^{2}_{\textbf{\textit{k}}_{p}} c^{2}_{\textbf{\textit{k}}_{q}}  (2 \alpha )^{-(\gamma_{p} +\frac{p}{2})} e^{- \alpha \Vert \textbf{\textit{x}}_{1} \Vert_{c}^{2} } C (2 \alpha )^{-(\gamma_{q} +\frac{q}{2})} e^{- \alpha \Vert \textbf{\textit{x}}_{2} \Vert_{c}^{2} }\\
&=& C c^{2}_{\textbf{\textit{k}}_{p}} c^{2}_{\textbf{\textit{k}}_{q}}  (2 \alpha )^{-(\gamma_{d} +\frac{d}{2})} e^{- \alpha \Vert \textbf{\textit{x}} \Vert_{c}^{2}}.
\end{eqnarray*}
 \item If $ \alpha \beta  < \frac{1}{4}$. Let 
 \begin{eqnarray*}
 f(\textbf{\textit{x}})  =  \frac{1}{(2\delta)^{\gamma_{d} +d}}P(\textbf{\textit{x}})e^{- \frac{ \Vert \textbf{\textit{x}} \Vert_{c}^{2}}{4\delta}},
 \end{eqnarray*}
 where $P(\textbf{\textit{x}}) = \sum_{A} P_{A}(\textbf{\textit{x}}_{1},\textbf{\textit{x}}_{2}) e_{A}$, with $ \delta \in ]\beta, \frac{1}{4\alpha}[$, and the polynomials  $P_{A}$  satisfy the assumptions of Miyachi's theorem for the Dunkl transform.
Then, 
 \begin{eqnarray*} 
 & & \mathcal{ F}^{-a,-b} \lbrace \frac{1}{(2\delta)^{\gamma_{d} +d}}P(\textbf{\textit{x}})e^{- \frac{ \Vert \textbf{\textit{x}} \Vert_{c}^{2}}{4\delta}}  \rbrace (\textbf{\textit{y}})\\
 & = &  \int_{\mathbb{R}^{p,q}} E_{p, \textbf{\textit{k}}_{p}}(\textbf{\textit{x}}_{1},-a \textbf{\textit{y}}_{1})\frac{1}{(2\delta)^{\gamma_{d} +d}}P(\textbf{\textit{x}})e^{- \frac{ \Vert \textbf{\textit{x}} \Vert_{c}^{2}}{4\delta}} E_{q, \textbf{\textit{k}}_{q}}(\textbf{\textit{x}}_{2},-b \textbf{\textit{y}}_{2})d\mu_{p,q}(\textbf{\textit{x}})\\
 & =& \sum_{A} \int_{\mathbb{R}^{q}} \left(  \int_{\mathbb{R}^{p}} E_{p, \textbf{\textit{k}}_{p}}(\textbf{\textit{x}}_{1},-a \textbf{\textit{y}}_{1}) \frac{1}{(2\delta)^{\gamma_{p} +p}}P_{A}(\textbf{\textit{x}}_{1}, \textbf{\textit{x}}_{2}) e^{- \frac{ \Vert \textbf{\textit{x}}_{1} \Vert_{c}^{2}}{4\delta}} d\mu^{\textbf{\textit{k}}_{p}}_{p}(\textbf{\textit{x}}_{1}) \right) e_{A} \\ 
& & \frac{1}{(2\delta)^{\gamma_{q} +q}} e^{- \frac{ \Vert \textbf{\textit{x}}_{2} \Vert_{c}^{2}}{4\delta}}  E_{q, \textbf{\textit{k}}_{q}}(\textbf{\textit{x}}_{2},-b \textbf{\textit{y}}_{2}) d\mu^{\textbf{\textit{k}}_{q}}_{q}(\textbf{\textit{x}}_{2}) \\
& =& \sum_{A} \frac{1}{(2\delta)^{\gamma_{q} +q}}  e^{- \delta \Vert \textbf{\textit{y}}_{1} \Vert_{c}^{2}}e_{A} \int_{\mathbb{R}^{q}} Q_{A}(\textbf{\textit{y}}_{1}, \textbf{\textit{x}}_{2})e^{- \frac{ \Vert \textbf{\textit{x}}_{2} \Vert_{c}^{2}}{4\delta}} E_{q, \textbf{\textit{k}}_{q}}(\textbf{\textit{x}}_{2},-b \textbf{\textit{y}}_{2}) d\mu^{\textbf{\textit{k}}_{q}}_{q}(\textbf{\textit{x}}_{2}) \\
& = & \sum_{A} e^{- \delta \Vert \textbf{\textit{y}}_{1} \Vert_{c}^{2}} e^{- \delta \Vert \textbf{\textit{y}}_{2} \Vert_{c}^{2}}Q_{A}(\textbf{\textit{y}}_{1}, \textbf{\textit{y}}_{2})e_{A}\\
& =& e^{- \delta \Vert \textbf{\textit{y}} \Vert_{c}^{2}} Q(\textbf{\textit{y}}_{1}, \textbf{\textit{y}}_{2}),
 \end{eqnarray*}
 where $Q$ is a polynomial of the same degree as $P$.
On the other hand, since $ \delta < \frac{1}{4\alpha}$, we get
\begin{align*}
e^{\alpha \Vert \textbf{\textit{x}} \Vert_{c}^{2}} f(\textbf{\textit{x}} )=\frac{1}{(2\delta)^{\gamma +d}}P(\textbf{\textit{x}}) e^{(\alpha - \frac{1}{4\delta})\Vert \textbf{\textit{x}} \Vert_{c}^{2}} \in L^{n}_{\textbf{\textit{k}}_{\scriptscriptstyle d}}(\mathbb{R}^{p,q};Cl_{p,q}) + L^{m}_{\textbf{\textit{k}}_{\scriptscriptstyle d}}(\mathbb{R}^{p,q};Cl_{p,q}),
\end{align*}
and as $ \delta  > \beta$, it follows that
\begin{align*}
\int_{\mathbb{R}^{p,q}} \log^{+} \frac{\Vert \mathcal{ F}^{-a,-b} \lbrace f \rbrace (\textbf{\textit{y}})e^{\beta \Vert \textbf{\textit{y}} \Vert_{c}^{2} }\Vert_{c}^{2}}{\lambda}d \textbf{\textit{y}} = \int_{\mathbb{R}^{p,q}} \log^{+} \frac{\Vert e^{(\beta - \delta) \Vert \textbf{\textit{y}} \Vert_{c}^{2} } Q(\textbf{\textit{y}}_{1}, \textbf{\textit{y}}_{2})\Vert_{c}^{2}}{\lambda}d \textbf{\textit{y}} < \infty.
\end{align*}
\end{enumerate}
Therefore, the theorem is now proven.
\end{proof}\rm
As a result, we obtain an analogue of the Miyachi theorem for the general double-sided orthogonal planes split quaternion Fourier transform \cite{key-7}.

Indeed, in the case where $p=0$, $q=2$ and $\textbf{\textit{k}}_{\scriptscriptstyle p}=\textbf{\textit{k}}_{\scriptscriptstyle q} = 0$, we have $Cl_{p,q}= Cl_{0,2} \cong \mathbb{H}$, $E_{p, \textbf{\textit{k}}_{\scriptscriptstyle p}}(x_{1},-a y_{1}) = e^{- a  x_{1}y_{1} }$, $E_{q, \textbf{\textit{k}}_{\scriptscriptstyle q}}(x_{2},-b y_{2}) = e^{- b  x_{2}y_{2} }$ and the measure $d\mu_{p,q}$ coincides with the Lebesgue measure on $\mathbb{R}^{2}$. So,
\begin{align}\label{QFT}
\mathcal{ F}^{-a,-b} \lbrace f \rbrace (\textbf{\textit{y}})=  \int_{\mathbb{R}^{2}} e^{-ax_{1}y_{1}}f(\textbf{\textit{x}})e^{-bx_{2}y_{2}}dx_{1}dx_{2}.
\end{align}
Note that this transform is a generalization of the double-sided orthogonal planes split quaternion Fourier transform, as the conditions on $a$ and $b$ being pure quaternions are not required. Thus, the Theorem \ref{miyachi} holds for this transform and is stated as follows:
\begin{Th}
 Let  $ f: \mathbb{R}^{2} \rightarrow \mathbb{H} $ be a measurable function such that
 \begin{align}\label{condd1c}
e^{\alpha \parallel \textbf{\textit{x}} \parallel_{c}^{2}} f \in L^{n}_{0}(\mathbb{R}^{2};\mathbb{H}) + L^{m}_{0}(\mathbb{R}^{2};\mathbb{H}),
\end{align}
and 
\begin{align}\label{condd2c}
\int_{\mathbb{R}^{2}} log^{+} \frac{\parallel \mathcal{ F}^{-a,-b}\lbrace f \rbrace (\textbf{\textit{y}})e^{\beta \parallel \textbf{\textit{y}} \parallel_{c}^{2} }\parallel_{c}^{2}}{\lambda}d \textbf{\textit{y}} < \infty ,
\end{align}
for some constants $\alpha >0$,  $\beta > 0$, $\lambda > 0, $ and $1 \leq n, m \leq +\infty$.
\begin{enumerate}
\item If $ \alpha \beta  > \frac{1}{4}$, then $f=0$ almost everywhere.
\item If  $ \alpha \beta =\frac{1}{4}$, then $f(\textbf{\textit{x}}) =C e^{ \frac{- \parallel \textbf{\textit{x}} \parallel_{c}^{2}}{4\alpha}}$ with $ \mid C \mid \leq \lambda $.
\item If $\alpha \beta < \frac{1}{4}$ ,  then for all $ \delta \in ]\beta, \frac{1}{4\alpha}[$, all functions of the form $ f(\textbf{\textit{x}}) =  \frac{1}{(2\delta)^{2}}P(\textbf{\textit{x}})e^{\frac{- \parallel \textbf{\textit{x}} \parallel_{c}^{2}}{4\delta}}$, where $P$ is a polynomial taking values in $\mathbb{H}$  satisfy (\ref{condd1c}) and (\ref{condd2c}).
\end{enumerate}
 \end{Th}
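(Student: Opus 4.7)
The plan is to obtain this theorem as an immediate specialization of Theorem~\ref{miyachi} rather than by a fresh argument. The reduction from the two-sided CDT to the quaternion Fourier transform (\ref{QFT}) at $p=0$, $q=2$, and $\textbf{\textit{k}}_{p}=\textbf{\textit{k}}_{q}=0$ has already been set up in the paragraph preceding the statement: the Dunkl kernels collapse to ordinary exponentials $e^{-ax_{1}y_{1}}$ and $e^{-bx_{2}y_{2}}$, the weight functions are identically $1$, $d\mu_{p,q}$ becomes Lebesgue measure on $\mathbb{R}^{2}$, and $Cl_{0,2}\cong\mathbb{H}$ with the Clifford modulus $\|\cdot\|_{c}$ identified with the quaternion modulus $|\cdot|$.

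Building on this, I would first verify that, under this identification, the function spaces $L^{n}_{\textbf{\textit{k}}_{d}}(\mathbb{R}^{p,q};Cl_{p,q})$ and $L^{n}_{0}(\mathbb{R}^{2};\mathbb{H})$ agree as normed spaces and that hypotheses (\ref{condd1c})--(\ref{condd2c}) are literally hypotheses (\ref{condd1})--(\ref{condd2}) rewritten in quaternionic notation. It is worth stressing that Theorem~\ref{miyachi} only requires $a^{2}=b^{2}=-1$ in $Cl_{p,q}$ and not the stronger condition that $a,b$ be pure unit quaternions; this is precisely why the corollary applies to a transform slightly more general than the standard double-sided orthogonal planes split quaternion Fourier transform, as the remark preceding the statement already emphasises.

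With the identification in place, each of the three cases of Theorem~\ref{miyachi} transfers verbatim. For $\alpha\beta>\tfrac{1}{4}$ one reads off $f=0$ almost everywhere. For $\alpha\beta=\tfrac{1}{4}$ one reads off a Gaussian whose coefficient $C$ inherits the bound $|C|\leq\lambda$ from (\ref{condd2c}) via the entire-function argument based on Lemma~\ref{lem}. For $\alpha\beta<\tfrac{1}{4}$, the exponent $\gamma_{d}+d$ in the general statement specialises to $0+2=2$, so the factor $(2\delta)^{-(\gamma_{d}+d)}$ becomes $(2\delta)^{-2}$ and the family of admissible examples takes the stated form $f(\textbf{\textit{x}})=(2\delta)^{-2}P(\textbf{\textit{x}})e^{-\|\textbf{\textit{x}}\|_{c}^{2}/(4\delta)}$ for $\delta\in(\beta,1/(4\alpha))$.

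Since the proof is purely a specialization, there is no genuine obstacle; the only care required is the bookkeeping of the Mehta-type normalisation constants $c_{\textbf{\textit{k}}_{p}}$ and $c_{\textbf{\textit{k}}_{q}}$, which in the $\textbf{\textit{k}}=0$ setting absorb cleanly into the free constant $C$, and the verification that the passage from $p+q=2$ to the two-dimensional quaternionic Lebesgue setting does not introduce any hidden measure-theoretic mismatch.
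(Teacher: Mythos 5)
Your proposal matches the paper exactly: the paper offers no separate proof of this theorem, but obtains it precisely as you do, by specializing Theorem~\ref{miyachi} to $p=0$, $q=2$, $\textbf{\textit{k}}_{p}=\textbf{\textit{k}}_{q}=0$, under which the Dunkl kernels become $e^{-ax_{1}y_{1}}$, $e^{-bx_{2}y_{2}}$, the measure becomes Lebesgue measure on $\mathbb{R}^{2}$, $Cl_{0,2}\cong\mathbb{H}$, and $\gamma_{d}+d=2$. Your reduction is correct and complete as written.
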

 \begin{remark}
 If we put $a=\textbf{\textit{i}}$ and $b=\textbf{\textit{j}}$ in the previous theorem, we reobtain Theorem $4.2$ in \cite{elh3}.
 \end{remark}

\section{ Conclusion}
In this paper, we have introduced a new generalization of the Fourier transform in the context of Clifford algebras and reflection groups. We studied some important properties such as linearity, the inversion formula, and Plancherel's theorems. As an application, we then used some of these properties to prove Miyachi's theorem for this transform.

A well-known example of a CDT with two square roots of $-1$ are the quaternion Fourier Transforms(QFTs) \cite{QFT3,  QFT2, key-777, QFA1}, which are particularly used in applications in image processing and signal analysis. Therefore, we expect that the CDT and other generalizations of QFTs obtained by the same methods will be of great potential utility and find applications  in the fields of image processing and signal analysis.\\


\textbf{ Data Availability } No data were used to support this study.\\
 
\textbf{ Conflicts of interests} On behalf of all authors, the corresponding author states that there is no conflict of interest.\\

\textbf{ Funding statement}  This research received no specific grant from any funding agency in the public, commercial, or not-for-profit sectors.\\

\textbf{Declaration of generative AI and AI-assisted technologies in the writing process}

During the preparation of this manuscript, the authors used ChatGPT (OpenAI, free version, September 2025) to improve the clarity and grammar of the English language. This tool was employed because the authors are non-native English speakers and sought to ensure the linguistic accuracy of the manuscript. The authors carefully reviewed and edited all content and take full responsibility for the final version of the manuscript.



\begin{thebibliography}{99}

\bibitem{key-BM} D. Brennecken and M. Rösler, \textit{The Dunkl–Laplace transform and Macdonald’s hypergeometric series}, Trans. Am. Math. Soc. \textbf{376}, 2419–2447 (2023).  


\bibitem{key-BR} F. Brackx, R. Delanghe, and F. Sommen, \textit{Clifford Analysis}, Pitman Publishers, Boston-London-Melbourne, 1982.

\bibitem{key-BR2} F. Brackx, J. S. R. Chisholm, and J. Bures (Eds), \textit{Clifford Analysis and Its Applications}, NATO Science Series, Kluwer, Dordrecht, 2001.

\bibitem{QFT3} T. B\"{u}low, \textit{Hypercomplex Spectral Signal Representations for the Processing and Analysis of Images} [PhD Thesis], University of Kiel, Germany, 1999.

\bibitem{OPD} P. Cerejeiras, U. Kahler, and G. Ren, \textit{Clifford Analysis for Finite Groups}, Complex Variables, \textbf{487-496}, 2006.

\bibitem{key-co} F. Chouchene, R. Daher, T. Kawazoe, and H. Mejjaoli, \textit{Miyachi's Theorem for the Dunkl Transform}, Integral Transform. Spec. Funct, \textbf{22}, 167-173, 2011.

\bibitem{DBY1} H. De Bie, \textit{New techniques for the two-sided quaternionic Fourier transform}, Proceedings of AGACSE, 2012.

\bibitem{DBY} H. De Bie, N. De Schepper, \textit{Clifford-Gegenbauer Polynomials Related to the Dunkl Dirac Operator}, Bull. Belg. Math. Soc. Simon Stevin, \textbf{18}(2), 193-214, 2011. \url{https://doi.org/10.36045/bbms/1307452070}

\bibitem{QFT2} H. De Bie, N. De Schepper, T. A. Ell, K. Rubrecht, and S. J. Sangwine, \textit{Connecting Spatial and Frequency Domains for the Quaternion Fourier Transform}, Appl. Math. Comput, \textbf{271}, 581-593, 2015.

\bibitem{Dor} 
C. Doran, A. Lasenby, \textit{Geometric Algebra for Physicists}, Cambridge University Press, 2003.

\bibitem{key-3} M. F. E. de Jeu, \textit{The Dunkl Transform}, Invent. Math, \textbf{113}, 147-162, 1993.


\bibitem{4444} C. F. Dunkl, \textit{Differential-Difference Operators Associated to Reflection Groups}, Trans. Am. Math. Soc, \textbf{311}, 167-183, 1989.

\bibitem{key-5} C. F. Dunkl, \textit{Hankel Transforms Associated to Finite Reflection Groups}, In: Proc. of the Special Session on Hypergeometric Functions on Domains of Positivity, Jack Polynomials and Applications, Proceedings, Tampa, Contemp. Math, \textbf{138}, 123-138, 1992.

\bibitem{key-6} C. F. Dunkl, \textit{Integral Kernels with Reflection Group Invariance}, Canad. J. Math, \textbf{43}, 1213-1227, 1991.

\bibitem{elh3} Y. El Haoui, S. Fahlaoui, \textit{Miyachi’s Theorem for the Quaternion Fourier Transform}, Circuits Syst. Signal Process, \textbf{39}, 1-14, 2019. \url{https://doi.org/10.1007/s00034-019-01243-6}

\bibitem{key-7} T. A. Ell, \textit{Quaternion-Fourier Transforms for Analysis of 2-Dimensional Linear Time-Invariant Partial-Differential Systems}, In: Proceedings of the 32nd Conference on Decision and Control, San Antonio, pp. 1830-1841, 1993.

\bibitem{key-fah} M. Essenhajy, S. Fahlaoui, \textit{The Two-Sided Quaternionic Dunkl Transform and Hardy’s Theorem}, Rend. Circ. Mat. Palermo II Ser, 2022. \url{https://doi.org/10.1007/s12215-021-00708-5}

\bibitem{key-uncertainty} M. Essenhajy, S. Fahlaoui, \textit{Uncertainty Principles for the Two-Sided Quaternionic Dunkl Transform}, Bol. Soc. Mat. Mex., 2025. \url{https://doi.org/10.1007/s40590-025-00764-2}


\bibitem{GHH} J. Gilbert, M. Murray, \textit{Clifford Algebra and Dirac Operators in Harmonic Analysis}, Cambridge University Press, Cambridge, 1991.

\bibitem{key-HITZ} E. Hitzer, \textit{Two-Sided Clifford Fourier Transform with Two Square Roots of $-1$ in $Cl(p, q)$}, Adv. Appl. Clifford Algebras, \textbf{24}, 313-332, 2014. \url{https://doi.org/10.1007/s00006-014-0441-9}

\bibitem{key-777} E. Hitzer, S. J. Sangwine, \textit{The Orthogonal 2D Planes Split of Quaternions and Steerable Quaternion Fourier Transformations}, In: Hitzer, E., Sangwine, S. (eds) Quaternion and Clifford Fourier Transforms and Wavelets, Trends in Mathematics, Birkh\"{a}user, Basel, 2013. \url{https://doi.org/10.1007/978-3-0348-0603-9_2}


\bibitem{key-MF2} H. Monaim and S. Fahlaoui, \textit{General one-dimensional Clifford Fourier Transform and applications to probability theory}, Rend. Circ. Mat. Palermo, II. Ser. \textbf{73}, 1453–1466 (2024). \url{https://doi.org/10.1007/s12215-023-00994-1}

\bibitem{key-MF1} H. Monaim and S. Fahlaoui, \textit{General Right-Sided Orthogonal 2D-Planes Split Quaternionic Wave-Packet Transform}, Adv. Appl. Clifford Algebras \textbf{33}, 58 (2023). \url{https://doi.org/10.1007/s00006-023-01303-w}

\bibitem{Por} 
R. M. Porter, I. R. Porteous, \textit{Clifford Algebras and Analysis: History and Applications}, Oxford University Press, 1995.



\bibitem{rosler4} M. R\"{o}sler, \textit{A positive radial product formula for the Dunkl kernel}, Trans. Amer. Math. Soc., \textbf{355}(6), 2413-2438, 2003.

\bibitem{rosler6} M. R\"{o}sler, \textit{Bessel-type signed hypergroups on $\mathbb{R}$. In Probability measures on groups and related structures}, XI (Oberwolfach, 1994), pages 292-304, World Sci. Publ., River Edge, NJ, 1995.


\bibitem{co} M. R\"{o}sler, \textit{Generalized Hermite Polynomials and the Heat Equation for Dunkl Operators}, Commun. Math. Phys, \textbf{192}, 519-541, 1998.

\bibitem{MR} M. R\"{o}sler, \textit{Dunkl Operators: Theory and Applications}, In: Orthogonal Polynomials and Special Functions (Leuven, 2002), Lect. Notes Math, \textbf{1817}, Springer-Verlag, 93-135, 2003.

\bibitem{key-TD3} F. Saadi, O. Tyr, and R. Daher, \textit{Generalized Hausdorff Operators on $\dot{K}^{\alpha}$}, J. Funct. Spaces \textbf{2021} (2021). \url{https://api.semanticscholar.org/CorpusID:239672350}

\bibitem{QFA1} S. J. Sangwine, \textit{Fourier Transforms of Colour Images Using Quaternion, or Hypercomplex, Numbers}, Electronics Letters, \textbf{32}(21), 1996.


\bibitem{shi} H. Shi, H. Yang, Z. Li, \textit{Two-Sided Fourier Transform in Clifford Analysis and Its Application}, Adv. Appl. Clifford Algebras, \textbf{30}, 67, 2020. \url{https://doi.org/10.1007/s00006-020-01083-7}

\bibitem{phy} W. Spr\"{o}ssig, \textit{Clifford Analysis and Its Applications in Mathematical Physics}, Cubo Matem\'{a}tica Educacional, \textbf{4}, 253-314, 2002.

\bibitem{TX} S. Thangavelu, Y. Xu, \textit{Convolution Operator and Maximal Function for the Dunkl Transform}, J. Anal. Math, \textbf{97}, 25-55, 2005.

\bibitem{key-TD1} O. Tyr and R. Daher, \textit{Benedicks–Amrein–Berthier type theorem and local uncertainty principles in Clifford algebras}, Rend. Circ. Mat. Palermo, II. Ser. \textbf{72}, 99–115 (2023). \url{https://doi.org/10.1007/s12215-021-00669-9}

\bibitem{key-TD2} O. Tyr and R. Daher, \textit{Beurling’s Theorem in the Clifford Algebras}, Adv. Appl. Clifford Algebras \textbf{33}, 37 (2023). \url{https://doi.org/10.1007/s00006-023-01284-w}

\bibitem{key-TD5} O. Tyr and R. Daher, \textit{Some direct and inverse theorems of approximation of functions in Jacobi-Dunkl discrete harmonic analysis}, J. Math. Sci. \textbf{266}, 534–553, 2022. \url{https://doi.org/10.1007/s10958-022-05891-z}

\bibitem{key-TD4} O. Tyr, F. Saadi, and R. Daher, \textit{On the generalized Hilbert transform and weighted Hardy spaces in q-Dunkl harmonic analysis}, Ramanujan J. \textbf{60}, 95–122, 2023. \url{https://doi.org/10.1007/s11139-022-00666-1}



\bibitem{DAP1} J. F. van Diejen, L. Vinet, \textit{Calogero-Sutherland-Moser Models}, CRM Series in Math. Phys, Springer-Verlag, 2000.

\bibitem{jv} J. Vince, \textit{Geometric Algebra for Computer Graphics}, Springer Science and Business Media, 2008.

\end{thebibliography}
\end{document}